\newtheorem{theorem}{Theorem}[section]
\newtheorem{lemma}[theorem]{Lemma}
\newtheorem{corollary}[theorem]{Corollary}
\theoremstyle{definition}
\theoremstyle{remark}
\newtheorem{remark}[theorem]{Remark}
\numberwithin{equation}{section}
\DeclareMathOperator*{\esup}{ess\,sup}
\begin{document}\large
\title[On upper estimates for approximation numbers]{On upper estimates for approximation numbers of a Laplace type transformation}

\author{Elena P. Ushakova}

\curraddr{Department of Mathematics, University of York, York, YO10 5DD, UK.}\email{elena.ushakova@york.ac.uk}

\address{Computing Centre of the Far-Eastern Branch of the Russian Academy of Sciences, Khabarovsk, 680000, RUSSIA.}
\email{elenau@inbox.ru}

\begin{abstract}
We deal with a real valued integral operator $\mathcal{L}$ of Laplace transformation type acting between Lebesgue spaces on the semi-axis. Sufficient conditions for belonging $\mathcal{L}$ to Schatten type classes are obtained. Some upper asymptotic estimates for the approximation numbers of $\mathcal{L}$ are also given.
\end{abstract}

\keywords{Laplace transformation, Lebesgue space, Approximation number, Schatten norm, Asymptotic estimate}

\subjclass{47G10}

\maketitle

\section{Introduction and preliminaries}

Given an operator $T$ between arbitrary (quasi-)Banach spaces $X$ and $Y$ $$a_n(T)=\inf\{\|T-K\|_{X\to Y}\colon K: X\to Y,\ \textrm{rank}\,K<n\}$$ is called the $n$--th approximation number of $T.$ In a case of Hilbert spaces these numbers coincide with singular values of $T.$ Compact operators $T:X\to Y$ satisfying \begin{equation}\label{SN} \|T\|_{\mathbf{S}_\alpha}=\biggl(\sum_{n=1}^\infty a_n^\alpha(T)\biggr)^{1/\alpha}<\infty,\ \ \ \ \ 0<\alpha<\infty,\end{equation} constitute Schatten-von Neumann classes $\mathbf{S}_\alpha.$ Symbol $\mathbf{S}_{\alpha,\textrm{weak}}$ stands for weak Schatten-von Neumann classes consisting of all operators $T$ such that $$\|T\|_{\mathbf{S}_{\alpha,\textrm{weak}}}=\sup_{t>0} \bigl(t(\sharp\{n\ge 1\colon a_n(T)>t\}\bigr)^{1/\alpha}\bigr)<\infty,\ \ \ \ \ 0<\alpha<\infty.$$ Schatten-Lorentz classes $\mathbf{S}_{\alpha,\beta}$ and $\mathbf{S}_{\alpha,\infty}$ are defined by \begin{equation}\label{SL} \mathbf{S}_{\alpha,\beta}=\biggl\{ T\colon \sum_{n=1}^\infty a_n^\beta(T)\,n^{\beta/\alpha-1}<\infty\biggr\}\ \ \ \ \ 0<\alpha,\beta<\infty,\end{equation} and $$\mathbf{S}_{\alpha,\infty}=\biggl\{ T\colon a_n(T)\le \textrm{const}\,n^{-1/\alpha}\biggr\}\ \ \ \ \ 0<\alpha<\infty.$$

Let $L^r(I)$ denote a space of all functions $f$ measurable in the Lebesgue sense on $I\subseteq[0,+\infty)=:\mathbb{R}^+$ with $\|f\|_{r,I}:=\bigl(\int_I |f|^r\bigr)^{1/r}<\infty.$
We study sequences of approximation numbers of an integral operator
\begin{equation}\label{L}
\mathcal{L}f(x):=\int_{\mathbb{R}^+}\mathrm{e}^{-xy^\lambda}f(y)v(y)\mathrm{d}y,\hspace{5mm}
x\in\mathbb{R}^+,\hspace{1cm}\mathcal{L}\colon L^p\to L^q,\end{equation}
when $1\le p<\infty,$ $0<q<\infty,$ $\lambda>0$ and a non-negative weight function $v$ is locally integrable on $\mathbb{R}^+.$

Boundedness and compactness properties of the Laplace transform $\mathcal{L}$ in the Lebesgue spaces were completely studied in \cite{SUsmz} and \cite{U2011}.

\begin{theorem}\label{BC}\cite[Th.1]{SUsmz},\cite[Th.3.1]{U2011} \\
{\rm\textbf{(i)}} Let $1<p\le q<\infty,$ $p':=p/(p-1)$ and $q':=q/(q-1).$ Denote $\kappa_1:=q^{-2/q}[\min\{2,2^{q-1}\}]^{1/q}$ and ${\bar \kappa}_1:=\begin{cases} 2^{1/q}(q')^{1/p'}(q-1)^{-1/q}, & 1<q\le 2\\ 2^{1/q'}(q')^{1/p'}, & q>2\end{cases}.$\\ The operator $\mathcal{L}$ is $L^p-L^q$-bounded if and only if
$$A_\mathcal{L}:=\sup_{t\in\mathbb{R}^+}A_\mathcal{L}(t)= \sup_{t\in\mathbb{R}^+}t^{-\lambda/q}\biggl(\int_{0}^{t}v^{p'}(y)dy\biggr)^{1/p'}<\infty,$$ where
$\kappa_1 A_\mathcal{L}\le \|\mathcal{L}\|_{L^p\to L^q}\le{\bar \kappa}_1 A_\mathcal{L}.$ $\mathcal{L}:L^p\to L^q$ is compact if and only if $A_{\mathcal{L}}<\infty$ and \begin{equation}\label{comp}{\rm(i)}\,\lim_{t\to 0}A_\mathcal{L}(t)=0,\ \ \ \ \  {\rm(ii)}\lim_{t\to \infty}A_\mathcal{L}(t)=0.\end{equation}
{\rm\textbf{(ii)}} Let $1\le q<p<\infty$ and $r:=p\,q/(p-q).$ If $q=1$ then $\mathcal{L}$ is bounded if and only if $\|\mathcal{L}\|_{L^p\to L^1}=B_p:=\bigl(\int_{\mathbb{R}^+}y^{-\lambda p'}v^{p'}(y)dy\bigr)^{1/p'}<\infty.$ If $q>1$ then $\mathcal{L}$ is bounded if and only if $$B_\mathcal{L}:=\biggl(\int_{\mathbb{R}^+} t^{-\lambda r/q}\biggl[\int_0^tv^{p'}(y)dy\biggr]^{r/q'}v^{p'}(t)dt\biggr)^{1/r}<\infty,$$ where
$\kappa_2 B_\mathcal{L}\le \|\mathcal{L}\|_{L^p\to L^q}\le {\bar \kappa}_2 B_\mathcal{L}$ with $\kappa_2:=[\min\{2,2^{q-1}\}/q]^{1/q}(p'q/r)^{1/q'},$ ${\bar \kappa}_2:=2^{1/q}(p')^{1/q'}(q-1)^{-1/q}$ if
$ 1<q\le 2$ and ${\bar \kappa}_2:=2^{1/q'}(p')^{1/q'}$ for $q>2.$ Moreover, since $1\le q<p<\infty$ then the boundedness of the operator $\mathcal{L}:L^p\to L^q$ is equivalent to the compactness of $\mathcal{L}$ from $L^p$ to $L^q.$\\
{\rm\textbf{(iii)}} If $0<q<1<p<\infty$ then $\mathcal{L}$ is bounded if $B_\mathcal{L}<\infty.$ If $\mathcal{L}$ is bounded then $\|B_{q}\|_{p'}:=\bigl(\int_{\mathbb{R}^+}y^{-\lambda p'/q}v^{p'}(y)dy\bigr)^{1/p'}<\infty.$ Besides, \begin{equation*}q^{-1/q}\|B_{q}\|_{p'}\le \|\mathcal{L}\|_{L^p\to L^q}\le p^{1/p}(p')^{1/q'}q^{-2/q}r^{1/r} B_\mathcal{L},\end{equation*} and the $L^p-L^q$- boundedness of $\mathcal{L}$ is equivalent to the compactness of $\mathcal{L}$ from $L^p$ to $L^q$ provided $q<p.$\\
{\rm\textbf{(iv)}} Let $0<q<1=p.$ The transformation $\mathcal{L}$ is bounded if $$B_{q'}:=\biggl(\int_{\mathbb{R}^+}t^{-\lambda/(1-q)-1}[\esup_{0<x<t}v(x)]^{q/(1-q)}dt\biggr)^{(1-q)/q}<\infty.$$ If $\mathcal{L}$ is bounded then $\displaystyle\esup_{t\in\mathbb{R}^+}B_q(t):=
\esup_{t\in\mathbb{R}^+}t^{-\lambda/q}\esup_{0<x<t}v(x)<\infty.$ We have also
\begin{eqnarray*}q^{-1/q}\esup_{t\in\mathbb{R}^+}B_q(t)\le \|\mathcal{L}\|_{L^1\to L^q}
\le  \lambda^{(1-q)/q}q^{-2/q} (1-q)^{-(1-q)/q} B_{q'}.\end{eqnarray*} Moreover, in view of $q<p=1$ the $L^1-L^q$- boundedness of $\mathcal{L}$ is equivalent to the $L^1-L^q$-compactness of $\mathcal{L}.$\\
{\rm\textbf{(v)}} If $1=p\le q<\infty$ then $\mathcal{L}$ bounded iff $\displaystyle\esup_{t\in\mathbb{R}^+}B_q(t)<\infty.$ The operator is compact from $L^1$ to $L^q$ for $1\le q<\infty$ iff $\displaystyle\esup_{t\in\mathbb{R}^+}B_q(t)<\infty$ and $\lim_{t\to 0}B_q(t)=\lim_{t\to\infty}B_q(t)=0.$\\
{\rm\textbf{(vi)}} Let $p=\infty.$ If $1\le q<\infty$ then $\mathcal{L}$ is bounded if and only if $$C_q:=\biggl(\int_{\mathbb{R}^+} t^{-\lambda}\biggl[\int_0^t v(y)dy\biggr]^{q-1}v(t)dt\biggr)^{1/q}<\infty.$$ For $q<1$ the operator $\mathcal{L}$ is bounded if $C_q<\infty.$ If $\mathcal{L}$ is bounded then $\int_{\mathbb{R}^+}t^{-\lambda/q}v(t)dt<\infty.$ Besides, for all $0<q<\infty$ the $L^\infty-L^q$-boundedness of the Laplace transformation $\mathcal{L}$ is equivalent to the compactness of $\mathcal{L}.$\\
{\rm\textbf{(vii)}} Let $q=\infty.$ If $1<p\le\infty$ then $\mathcal{L}$ is bounded iff $\|v\|_{p'}<\infty$ and compact iff $\|v\|_{p'}<\infty.$  If $p=1$ then $\mathcal{L}$ is never compact, but bounded iff $\esup_{t\in\mathbb{R}^+}B_1(t)<\infty.$
\end{theorem}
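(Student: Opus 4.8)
\emph{Proof proposal.} The plan is to reduce each of the seven assertions to a known boundedness/compactness criterion for a weighted Hardy operator and then to recover the displayed constants by a direct computation on model functions. The engine is a two-sided comparison of the kernel $\mathrm{e}^{-xy^{\lambda}}$ with the step kernel $\chi_{\{y\le x^{-1/\lambda}\}}$: for $f\ge 0$,
\[\mathrm{e}^{-1}\int_{0}^{x^{-1/\lambda}}fv\ \le\ \mathcal{L}f(x)\ \le\ \int_{0}^{x^{-1/\lambda}}fv\ +\ \sum_{k\ge 0}\mathrm{e}^{-2^{k}}\int_{0}^{2^{(k+1)/\lambda}x^{-1/\lambda}}fv,\]
the left inequality because $\mathrm{e}^{-xy^{\lambda}}\ge\mathrm{e}^{-1}$ for $y\le x^{-1/\lambda}$, and the right one by splitting $\int_{0}^{\infty}=\int_{0}^{x^{-1/\lambda}}+\int_{x^{-1/\lambda}}^{\infty}$, using $\mathrm{e}^{-xy^{\lambda}}\le 1$ on the first piece and $\mathrm{e}^{-xy^{\lambda}}\le\mathrm{e}^{-2^{k}}$ on the annulus $2^{k/\lambda}x^{-1/\lambda}\le y<2^{(k+1)/\lambda}x^{-1/\lambda}$. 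Since the Hardy operator $Hf(x):=\int_{0}^{x^{-1/\lambda}}fv$ obeys $\int_{0}^{cx^{-1/\lambda}}fv=Hf(c^{-\lambda}x)$, every term on the right is a dilate of $Hf$ whose $L^{q}$ norm scales by a power of $2$, and $\sum_{k}\mathrm{e}^{-2^{k}}2^{k/q}<\infty$; hence $\|\mathcal{L}\|_{L^{p}\to L^{q}}$ and $\|H\|_{L^{p}\to L^{q}}$ are comparable, with constants depending only on $q$. The substitution $x=t^{-\lambda}$ then turns $H$ into the genuine Hardy operator $f\mapsto\int_{0}^{t}f(y)v(y)\,dy$ from $L^{p}(dy)$ to $L^{q}(\lambda t^{-\lambda-1}\,dt)$, so the classical weighted Hardy criteria (Muckenhoupt--Bradley for $p\le q$; Maz'ya--Rozin and Sinnamon for $q<p$; the standard endpoint results for $p$ or $q$ equal to $1$ or $\infty$) apply after this change of measure.

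Running through the parameter ranges yields the statement. For $1<p\le q<\infty$ the Muckenhoupt functional of the transplanted Hardy operator is exactly $\sup_{t}A_{\mathcal{L}}(t)$, and the conditions $\lim_{t\to 0}A_{\mathcal{L}}(t)=\lim_{t\to\infty}A_{\mathcal{L}}(t)=0$ are precisely the vanishing-at-the-endpoints conditions in the corresponding Hardy compactness theorem, which gives (i). For $1\le q<p<\infty$ the Maz'ya--Rozin integral becomes $B_{\mathcal{L}}$ (and $B_{p}$ when $q=1$); being an integral over all of $\mathbb{R}^{+}$, its finiteness forces $\|\mathcal{L}-\mathcal{L}\chi_{[\varepsilon,1/\varepsilon]}\|\to 0$, while $\mathcal{L}\chi_{[\varepsilon,1/\varepsilon]}$ has a bounded continuous kernel on a compact interval and is therefore compact, so boundedness and compactness coincide here; the same mechanism also settles the $q<p$ halves of (iii) and (iv). For $0<q<1$ only sufficiency holds (parts (iii), (iv) and the $q<1$ part of (vi)): the Hardy operator is $L^{p}\to L^{q}$ bounded under the integral condition ($B_{\mathcal{L}}<\infty$, resp. $B_{q'}<\infty$, $C_{q}<\infty$) but the converse fails for $q<1$; the stated necessary conditions $\|B_{q}\|_{p'}<\infty$, $\esup_{t}B_{q}(t)<\infty$ and $\int_{\mathbb{R}^{+}}t^{-\lambda/q}v<\infty$ are obtained by testing $\mathcal{L}$ on $f=v^{p'-1}\chi_{[0,t]}$ (resp. on $\chi_{[0,t]}$, and on masses concentrating near a point when $p=\infty$) and invoking $\mathrm{e}^{-xy^{\lambda}}\ge\mathrm{e}^{-1}$ on $x\le t^{-\lambda}$. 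The remaining parts (v), the $q\ge 1$ half of (vi) and (vii) are the $p=1$, $p=\infty$ and $q=\infty$ cases for the Hardy operator; for $q=\infty$ in particular the kernel is maximal at $x=0$, so $\|\mathcal{L}f\|_{\infty}=\int_{\mathbb{R}^{+}}|f|v$ for $f\ge0$ and the norm equals $\|v\|_{p'}$ by Holder, while compactness for $p>1$ follows from an Arzela--Ascoli argument in $C_{0}(\mathbb{R}^{+})$ together with equismallness of the $L^{p'}$-tails of $v$, which fails for $p=1$.

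The part I expect to be genuinely laborious -- and the reason the reduction above is not by itself enough -- is pinning down the constants $\kappa_{1},\bar\kappa_{1},\kappa_{2},\bar\kappa_{2},\dots$. For the lower bounds one evaluates $\|\mathcal{L}f\|_{q}$ precisely on the test functions above: the power $q^{-2/q}$ arises from the $\Gamma$-function values that the exponential kernel produces after a substitution of the form $s=cxy^{\lambda}$, and the factor $[\min\{2,2^{q-1}\}]^{1/q}$ from combining, via a power-mean inequality, the two bumps obtained by splitting the defining supremum or integral of $A_{\mathcal{L}}$ (resp. $B_{\mathcal{L}}$) in two. For the upper bounds one combines the Holder exponents -- which supply the $(q')^{1/p'}$, $(p')^{1/q'}$ and $(q-1)^{-1/q}$ factors -- with the sharp constant of the one-dimensional Hardy-type inequality in the relevant range, whose $q$-dependence carries the dichotomy $q\le 2$ versus $q>2$ visible in $\bar\kappa_{1}$ and $\bar\kappa_{2}$ (a reflection of the uniform convexity and smoothness of $L^{q}$), while the powers of $2$ account for splitting the kernel into two (or dyadically many) pieces. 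Apart from this constant bookkeeping and the uniform treatment of the degenerate endpoint regimes, the argument is routine.
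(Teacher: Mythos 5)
First, note that Theorem \ref{BC} is imported from \cite{SUsmz} and \cite{U2011}; this paper contains no proof of it, so the only internal benchmark is Lemma \ref{dec}, which is the actual engine of the cited proofs. Your qualitative strategy --- majorize and minorize the kernel $\mathrm{e}^{-xy^{\lambda}}$ by (dilates of) the step kernel $\chi_{\{y\le x^{-1/\lambda}\}}$, change variables $x=t^{-\lambda}$, and invoke the Muckenhoupt--Bradley and Maz'ya--Rozin--Sinnamon criteria together with the standard endpoint and compactness theorems for the weighted Hardy operator --- is sound and is essentially the same reduction the references carry out. Your dyadic-annulus upper bound with $\sum_k\mathrm{e}^{-2^k}2^{(k+1)/q}<\infty$ is a legitimate (if cruder) substitute for the exact two-sided identity of Lemma \ref{dec}, and it does deliver all of the ``bounded iff'', ``compact iff'' assertions in parts (i)--(vii) with \emph{some} constants. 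The treatment of the necessary conditions for $q<1$ by testing on $v^{p'-1}\chi_{[0,t]}$, and of $q=\infty$ via monotonicity of the kernel in $x$, also matches the standard route.

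The genuine gap is the quantitative half of the theorem. Parts (i)--(iv) assert the specific two-sided bounds $\kappa_1A_{\mathcal L}\le\|\mathcal L\|\le\bar\kappa_1A_{\mathcal L}$, etc., with explicit constants, and your mechanism cannot produce them: the dyadic splitting contaminates every estimate with the factor $\sum_k\mathrm{e}^{-2^k}2^{(k+1)/q}$, and the concluding paragraph on ``constant bookkeeping'' is heuristic rather than a derivation. Moreover the heuristics point in the wrong direction. The dichotomy $1<q\le2$ versus $q>2$ in $\bar\kappa_1,\bar\kappa_2$ is not a reflection of uniform convexity or smoothness of $L^q$; it comes from the elementary constants $\alpha_0=\max\{2,2^{q-1}\}$ and $\beta_0=2/(q-1)$ or $2^{q-1}$ in Lemma \ref{dec}, which arise when one differentiates $\bigl[\mathrm{e}^{-xz^{\lambda}}-\mathrm{e}^{-xb^{\lambda}}\bigr]^q$ in $z$ and bounds $(a+b)^q$ against $a^q+b^q$; the factor $q^{-2/q}$ likewise comes from the exact evaluation $\int_{\mathbb R^+}\bigl[\mathrm{e}^{-xz^{\lambda}}-\mathrm{e}^{-xb^{\lambda}}\bigr]^q\mathrm{d}x$ against $z^{-\lambda}-b^{-\lambda}$, not from $\Gamma$-function values. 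To close the gap you would replace your step-kernel comparison by Lemma \ref{dec} with $(a,b)=(0,\infty)$, which reduces $\|\mathcal Lf\|_q^q$ \emph{exactly} to a weighted Hardy integral with the constants $\alpha_0/q$ and $\beta_0/q$, and then quote the sharp-constant forms of the Hardy inequalities; as written, your argument proves the theorem only up to unspecified constants depending on $q$.
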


Schatten-von Neumann norms of $\mathcal{L}:L^2\to L^2$ were studied in \cite{SMZ10} (see also \cite{DAN10}).
\begin{theorem}\label{cite}\cite[Th.4]{SMZ10}, \cite[Th.3]{DAN10} Suppose $\mathcal{L}$ is compact from $L^2$ to $L^2.$\\
{\rm\textbf{(i)}} If $\mathcal{L}\in\mathbf{S}_\alpha$ with $2\le \alpha<\infty$ then $$X_\alpha:=\biggl(\int_{\mathbb{R}^+}x^{-(\lambda\alpha/2+1)}\biggl(\int_0^x v^2(y)\mathrm{d}y\biggr)^{\alpha/2}\mathrm{d}x\biggr)^{1/\alpha}<\infty.$$
{\rm\textbf{(ii)}} If $\mathcal{L}\in\mathbf{S}_\alpha$ for $0<\alpha\le 2$ then $X_2<\infty.$\\
{\rm\textbf{(iii)}} If $X_\alpha<\infty$ for $0<\alpha<\infty$ then $\mathcal{L}\in\mathbf{S}_\alpha$ for all $0<\alpha<\infty.$\end{theorem}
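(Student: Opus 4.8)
The plan is to normalise $\mathcal{L}$ to a standard weighted Laplace transform, read off the Hilbert--Schmidt case by an explicit trace computation, and then run a dyadic (piecewise‑polynomial) discretisation; the only genuinely hard point is the upper bound in (iii) for small $\alpha$.

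\textbf{Step 1: normalisation and the case $\alpha=2$.} Substituting $y=t^{1/\lambda}$ inside the integral and conjugating by the unitary $(Uf)(t)=f(t^{1/\lambda})\bigl(\lambda^{-1}t^{1/\lambda-1}\bigr)^{1/2}$ on $L^2(\mathbb{R}^+)$ shows that $\mathcal{L}$ is unitarily equivalent to $\mathcal{M}h(x)=\int_{\mathbb{R}^+}\mathrm{e}^{-xt}\omega(t)h(t)\,\mathrm{d}t$ with $\omega(t)=\lambda^{-1/2}v(t^{1/\lambda})t^{(1-\lambda)/(2\lambda)}$, so $a_n(\mathcal{L})=a_n(\mathcal{M})$ and $\|\mathcal{L}\|_{\mathbf{S}_\alpha}=\|\mathcal{M}\|_{\mathbf{S}_\alpha}$; a change of variables turns $X_\alpha$ into $\lambda^{-1/\alpha}Y_\alpha$, where $Y_\alpha^\alpha:=\int_{\mathbb{R}^+}\sigma^{-\alpha/2-1}W(\sigma)^{\alpha/2}\,\mathrm{d}\sigma$ and $W(\sigma):=\int_0^\sigma\omega^2$. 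Since $\mathcal{M}^*\mathcal{M}$ has the nonnegative kernel $\omega(s)\omega(t)/(s+t)$, one gets $\|\mathcal{M}\|_{\mathbf{S}_2}^2=\mathrm{tr}(\mathcal{M}^*\mathcal{M})=\int_{\mathbb{R}^+}\omega^2(t)\bigl(\int_0^\infty\mathrm{e}^{-2xt}\mathrm{d}x\bigr)\mathrm{d}t=\tfrac12\int_{\mathbb{R}^+}\omega^2(t)t^{-1}\mathrm{d}t=\tfrac12 Y_2^2$ (the last identity by Fubini). Hence $\mathcal{M}\in\mathbf{S}_2\iff X_2<\infty$; because $\mathbf{S}_\alpha\subseteq\mathbf{S}_2$ for $0<\alpha\le2$ this already proves (ii), and it proves the instance $\alpha=2$ of (iii).

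\textbf{Step 2: necessity, part (i).} Fix a partition $\{\tau_k\}$ of $\mathbb{R}^+$ adapted to $\omega$ (equal increments of $\log W$) with associated intervals $\Delta_k$. The diagonal pieces $\mathcal{M}_k$ of $\mathcal{M}$ have pairwise disjoint domains and ranges, so $\|\mathcal{M}\|_{\mathbf{S}_\alpha}^\alpha\ge\sum_k\|\mathcal{M}_k\|_{\mathbf{S}_\alpha}^\alpha\ge\sum_k\|\mathcal{M}_k\|^\alpha$, while testing each $\mathcal{M}_k$ against a suitably tuned exponential gives $\|\mathcal{M}_k\|\gtrsim A_\mathcal{L}$ evaluated on $\Delta_k$; summing the resulting $\ell^\alpha$‑sum reproduces a constant multiple of $X_\alpha^\alpha$, which yields $\mathcal{L}\in\mathbf{S}_\alpha\Rightarrow X_\alpha<\infty$ (in particular for $2\le\alpha<\infty$ as stated).

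\textbf{Step 3: sufficiency, part (iii).} Partition both axes geometrically, $T_k=[2^k,2^{k+1})$ in $t$ and $X_j=[2^{-j-1},2^{-j})$ in $x$, so that $xt\asymp2^{k-j}$ on the block $X_j\times T_k$; write $\mathcal{M}=\sum_m\mathcal{M}^{(m)}$, $\mathcal{M}^{(m)}=\sum_j\mathcal{M}_{j,j+m}$, and split $m$ into a near band $|m|\le L$, a far region $m>L$ (where $xt$ is large) and a low region $m<-L$ (where $xt$ is small), $L$ fixed. For the near and far regions, rescaling $X_j\times T_{j+m}$ to the fixed rectangle $[\tfrac12,1)\times[1,2)$ conjugates $\mathcal{M}_{j,j+m}$ to an operator whose kernel is $(|X_j||T_{j+m}|)^{-1/2}$ times a weight times the \emph{fixed} real‑analytic function $\mathrm{e}^{-2^m\xi\tau}$ on a compact set avoiding the singularity of $1/(s+t)$; truncating the Taylor series of this analytic factor produces $j$‑uniform rank‑$n$ approximations with error $O(\rho^{-n})$, whence $\|\mathcal{M}_{j,j+m}\|_{\mathbf{S}_\alpha}\lesssim_{\alpha,m}\bigl(|X_j|\int_{T_{j+m}}\omega^2\bigr)^{1/2}$ for \emph{every} $\alpha>0$, with an extra factor $\mathrm{e}^{-c2^m}$ when $m>L$. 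Disjointness of domains and ranges of $\{\mathcal{M}_{j,j+m}\}_j$ gives $\|\mathcal{M}^{(m)}\|_{\mathbf{S}_\alpha}^\alpha=\sum_j\|\mathcal{M}_{j,j+m}\|_{\mathbf{S}_\alpha}^\alpha\lesssim_{\alpha,m}\sum_j\bigl(|X_j|\int_{T_{j+m}}\omega^2\bigr)^{\alpha/2}\asymp 2^{m\alpha/2}Y_\alpha^\alpha$, the last step being a Riemann‑sum comparison with $\int\sigma^{-\alpha/2-1}W^{\alpha/2}$; summing over the finite near band and over the far region (geometric convergence via $\mathrm{e}^{-c2^m}$) contributes $\lesssim Y_\alpha$. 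For the low region the kernel is $\sum_j\chi_{X_j}(x)\int_0^{c_j}\mathrm{e}^{-xt}\omega h\,\mathrm{d}t$ with $c_j\asymp2^{-j}$ and $xt\le2^{-L}$; expanding $\mathrm{e}^{-xt}=\sum_{n\ge0}(-xt)^n/n!$ (rapidly convergent) reduces it to staircase operators $h\mapsto\sum_j\chi_{X_j}(x)x^n\langle t^n\omega\chi_{(0,c_j)},h\rangle$ with orthogonal ranges $\{x^n\chi_{X_j}\}_j$, whose $\mathbf{S}_\alpha$ norm for $\alpha\le2$ is bounded via the trace subadditivity inequality $\mathrm{tr}\bigl(\sum_iP_i\bigr)^{\alpha/2}\le\sum_i(\mathrm{tr}\,P_i)^{\alpha/2}$ ($P_i\ge0$) by $\bigl(\sum_j(|X_j|^{2n+1}c_j^{2n}W(c_j))^{\alpha/2}\bigr)^{1/\alpha}\asymp 2^{n(1-L)}Y_\alpha$, and for $\alpha\ge2$ by the classical Birman--Solomyak estimate for lower‑triangular (Hardy‑type) block operators, again with the gain $2^{n(1-L)}$; summing the $n$‑series closes the low region. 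Collecting the three regions, $\|\mathcal{M}\|_{\mathbf{S}_\alpha}\lesssim Y_\alpha\asymp X_\alpha<\infty$.

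\textbf{Main obstacle.} The crux is getting the \emph{correct} bound ($X_\alpha$, not merely $X_2$) for \emph{all} $\alpha\in(0,\infty)$ in Step 3. For $\alpha<2$ the naive comparison $\mathrm{e}^{-xt}\le\mathrm{const}\cdot\chi_{\{xt\le1\}}$ replaces $\mathcal{M}$ by a weighted Hardy operator, which on $L^2$ is \emph{not} in $\mathbf{S}_\alpha$ for $\alpha\le1$ (its singular numbers decay only like $n^{-1}$), so the analyticity of $\mathrm{e}^{-xt}$ away from the diagonal must be exploited essentially, through the $j$‑uniform exponentially accurate polynomial approximation on rescaled blocks; this is what distinguishes $\mathcal{L}$ from a Hardy operator and makes (iii) hold for every $\alpha>0$. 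The remaining difficulty is bookkeeping: choosing $L$ and the two partitions so that the block, staircase and Hardy estimates reassemble into the integral $X_\alpha^\alpha$ itself rather than into a logarithmically larger quantity.
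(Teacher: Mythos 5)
This theorem is not proved in the paper at all: it is quoted verbatim from \cite[Th.4]{SMZ10} and \cite[Th.3]{DAN10}, and the only argument the present paper supplies is the Remark that follows it, which sharpens the necessity parts {\rm\textbf{(i)}}--{\rm\textbf{(ii)}} to $1\le\alpha<\infty$ by feeding the explicit orthonormal families $f_k$ (normalised $v\chi_{\Delta_k}$) and $g_k$ (normalised indicators of $(2^{-(k+1)\lambda},2^{-k\lambda})$) into Pietsch's inequality \eqref{hil} and using $\bigl(\sum_k\tau_k^\alpha\bigr)^{1/\alpha}\approx X_\alpha$. Your Step 2 is the same idea in a different costume: instead of \eqref{hil} you use the block-diagonal pinching bound $\sum_k\|P_k\mathcal{M}Q_k\|^\alpha_{\mathbf{S}_\alpha}\le\|\mathcal{M}\|^\alpha_{\mathbf{S}_\alpha}$ plus operator-norm lower bounds on each block; this is valid for $\alpha\ge1$ (pinching is a contraction there), so it covers the stated range $\alpha\ge2$, but the paper's version via \eqref{hil} is cleaner and gives the same improvement to $\alpha\ge1$. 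Your Step 1 (unitary reduction to $\lambda=1$ and the exact trace identity for $\alpha=2$) is correct and settles {\rm\textbf{(ii)}} economically. Step 3, the sufficiency, has no counterpart in this paper and would have to be measured against \cite{SMZ10}; as written it is a plausible Birman--Solomyak-type scheme and the near-band estimate $a_{n+2}(\mathcal{M}_{j,j+m})\le\frac{(2^{m+1})^{n+1}}{(n+1)!}\bigl(|X_j|\int_{T_{j+m}}\omega^2\bigr)^{1/2}$ (Taylor truncation of $\mathrm{e}^{-xt}$) does give $j$-uniform $\mathbf{S}_\alpha$ control for every $\alpha>0$, which is exactly the point where the Laplace kernel beats a Hardy kernel. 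Three spots are still schematic: in the far region the crude Taylor constant grows like $\mathrm{e}^{\alpha 2^{m+1}}$ and is \emph{not} beaten by the prefactor $\mathrm{e}^{-c2^m}$, so you must switch to a Chebyshev-type approximation on $[2^{m-2},2^{m+1}]$ (or cap the first $O(2^m)$ singular values by the Hilbert--Schmidt bound) before summing over $m>L$; in the low region the cutoff should read $c_j\asymp2^{\,j-L}$, not $2^{-j}$ (harmless, but it propagates into the claimed gain); and the appeal to a ``Birman--Solomyak estimate'' for $\alpha\ge2$ there is both vague and unnecessary, since for $\alpha\ge2$ the plain bounds $\|\mathcal{M}_{j,j+m}\|_{\mathbf{S}_\alpha}\le\|\mathcal{M}_{j,j+m}\|_{\mathbf{S}_2}$ together with $\sum_j\bigl(|X_j|\int_{T_{j+m}}\omega^2\bigr)^{\alpha/2}\ll2^{m\alpha/2}Y_\alpha^\alpha$ already make the whole sum over $m<-L$ converge. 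None of these is a fatal gap, but they are the places a referee of the blind write-up would press.
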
 \begin{remark} The assertion {\rm\textbf{(i)}} of the theorem can be improved to $1\le \alpha<\infty.$ Indeed, by \cite[Lemma 2.11.12]{Pie} we have for all orthonormal sequences $\{f_k\}$ and $\{g_k\}$ \begin{equation}\label{hil}\sum_{k=1}^n|(\mathcal{L}f_k,g_k)|^\alpha \le \sum_{k=1}^n a_k^\alpha(\mathcal{L}),\ \ \ \ \ \ \ \ n\le\infty,\end{equation} where $1\le\alpha<\infty,$ $\mathcal{L}$ is compact on $L^2$ and $\langle\cdot,\cdot\rangle$ is the inner product. Applying the functions $$f_k(y)=\chi_{(2^{k-1},2^{k})}(y)v(y)\biggl(\int_{2^{k-1}}^{2^k}v^2(z)\mathrm{d}z\biggr)^{-1/2},$$ $$g_k(x)=\frac{2^{k\lambda/2+\lambda}}{2^\lambda{-1}}\chi_{(2^{-(k+1)\lambda},2^{-k\lambda})}(x)$$ into the left-hand side of \eqref{hil} we obtain \begin{eqnarray*}\sum_{k\in\mathbb{N}}a_k^\alpha\ge \frac{1}{2}\sum_{k\in\mathbb{Z}}\tau_k^\alpha, \end{eqnarray*}
where $\tau_k=2^{-\lambda k/2}\biggl(\int_{2^{k-1}}^{2^k}v^2(z)\mathrm{d}z\biggr)^{1/2}$ and $\bigl(\sum_{k\in\mathbb{Z}}\tau_k^\alpha\bigr)^{1/\alpha}\approx X_\alpha,$ $0<\alpha<\infty.$
Thus, the parts {\rm\textbf{(i)}} and {\rm\textbf{(ii)}} of the theorem \ref{cite} have been changed as follows:\\
{\rm\textbf{(i')}} If $\mathcal{L}\in\mathbf{S}_\alpha,$ where $1\le \alpha<\infty,$ then $X_\alpha<\infty$ for $1\le\alpha<\infty.$\\
{\rm\textbf{(ii')}} If $\mathcal{L}\in\mathbf{S}_\alpha$ for $0<\alpha\le 1,$ then $X_1<\infty.$
\end{remark}

This work is devoted to approximation numbers of the operator $\mathcal{L}$ in (quasi-)Banach case of Lebesgue spaces. We obtain sufficient conditions for the Laplace transformation to belong to weighted Schatten classes $$\mathbf{S}_{s,u}=\biggl\{T\colon \biggl(\sum_{n=1}^\infty \bigl[a_n(\mathcal{L})u_n\bigr]^s\biggr)^{1/s}<\infty\biggr\},$$ when the operator is acting from $L^p$ to $L^q$ with $1\le p<\infty$ and $0<q<\infty.$  The results imply upper estimates either for the corresponding Schatten-Lorentz norms \eqref{SL} or for the Schat\-ten-von Neumann classes \eqref{SN}.
Some upper asymptotic estimates for the sequences $\{a_n(\mathcal{L})\}$ are also given in the article.

Approximation numbers of integral operators have being intensively studied since late seventies of the last century. In particular, in some recent papers (see e.g. \cite{BirSol} -- 
  \cite{ES1994}, \cite{LL1999} -- 
  \cite{LS2007}, \cite{S2000}) authors deal with Volterra type integral operators. The results in \cite{BirSol}, \cite{ES1994}, \cite{LS2007} and \cite{S2000} are even on the operators with non-factorised kernels. The purpose of our work is study approximation numbers of an integral operator of such a type.

For achieving our results we mainly adapt remarkable methods which were originated in \cite{EEH1988}, \cite{EEH1997}, \cite{EHL1998} and continued in \cite{LS2000} for Hardy integral operator $H_{\mathbb{R}^+,v,w}$ defined by \eqref{hardyy}. The transformation $\mathcal{L}$ and the Hardy operator are related to each other by Lemma 1 from \cite{SUsmz}, which is true for subclasses of non-negative functions from Lebesgue spaces only. By using this fact we extract some upper estimates for approximation numbers of the operator $\mathcal{L}.$

For our purposes Lemma 1 \cite{SUsmz} has to be modified as follows.
\begin{lemma}\label{dec}\cite{SUsmz}
Let $0\le a<b\le \infty$ and an operator $\mathcal{L}_{(a,b)}$ be given by \begin{equation}\label{L_a}\mathcal{L}_{(a,b)}f(x):=\int_{a}^bf(y)v(y) \bigl[\mathrm{e}^{-xy^\lambda}-\mathrm{e}^{-xb^\lambda}\bigr]\mathrm{d}y,\ \ \ x>0.\end{equation} If $f\ge 0$ and $1<q<\infty$ then \begin{eqnarray}
\frac{\alpha_0}{q}\int_a^b\biggl(\int_a^z f(y)v(y)\mathrm{d}y\biggr)^q\mathrm{d}\biggl[-\biggr(\int_{\mathbb{R}^+} \bigl[\mathrm{e}^{-xz^\lambda}-\mathrm{e}^{-xb^\lambda}\bigr]^q\mathrm{d}x\biggr)\biggr]\nonumber\\\le \|\mathcal{L}_{(a,b)}f\|_q^q
\le\frac{\beta_0}{q}\int_a^b\biggl(\int_a^z f(y)v(y)\mathrm{d}y\biggr)^q\mathrm{d}\bigl[-(z^{-\lambda}-b^{-\lambda})\bigr],
\end{eqnarray} where $\alpha_0=\max\{2,2^{q-1}\}$ and $\beta_0:=\begin{cases}2/(q-1), & 1<q\le 2\\ 2^{q-1}, & q>2\end{cases}.$\end{lemma}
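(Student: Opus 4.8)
The plan is to reduce $\mathcal{L}_{(a,b)}f$ to a Hardy-type primitive of $fv$ and then estimate it from above and below separately. Since $\mathrm{e}^{-xy^\lambda}-\mathrm{e}^{-xb^\lambda}=\int_y^b\lambda x z^{\lambda-1}\mathrm{e}^{-xz^\lambda}\,\mathrm{d}z$ for $a\le y\le b$, Tonelli's theorem (all integrands are non-negative) gives, with $F(z):=\int_a^z f(y)v(y)\,\mathrm{d}y$,
\[
\mathcal{L}_{(a,b)}f(x)=\int_a^b F(z)\,\lambda x z^{\lambda-1}\mathrm{e}^{-xz^\lambda}\,\mathrm{d}z=\int_a^b\bigl[\mathrm{e}^{-xz^\lambda}-\mathrm{e}^{-xb^\lambda}\bigr]\,\mathrm{d}F(z),
\]
where $F$ is non-decreasing with $F(a)=0$. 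It is convenient to run the argument first under the additional assumption that $v$, and hence $F$, is absolutely continuous, and to recover the general case by a routine approximation.

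For the lower bound I would exploit the monotonicity of $y\mapsto\mathrm{e}^{-xy^\lambda}$: for $f\ge 0$ and every $z\in(a,b)$ one has the pointwise estimate $\mathcal{L}_{(a,b)}f(x)\ge F(z)\bigl[\mathrm{e}^{-xz^\lambda}-\mathrm{e}^{-xb^\lambda}\bigr]$. Plugging this into the factor $\mathcal{L}_{(a,b)}f(x)^{q-1}$ of the identity $\mathcal{L}_{(a,b)}f(x)^q=\int_a^b\mathcal{L}_{(a,b)}f(x)^{q-1}\bigl[\mathrm{e}^{-xz^\lambda}-\mathrm{e}^{-xb^\lambda}\bigr]\,\mathrm{d}F(z)$ (legitimate since $q-1\ge0$ and all quantities are non-negative) yields $\mathcal{L}_{(a,b)}f(x)^q\ge\int_a^b F(z)^{q-1}\bigl[\mathrm{e}^{-xz^\lambda}-\mathrm{e}^{-xb^\lambda}\bigr]^q\,\mathrm{d}F(z)$. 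Integrating in $x$, applying Tonelli and abbreviating $g(z):=\int_{\mathbb{R}^+}\bigl[\mathrm{e}^{-xz^\lambda}-\mathrm{e}^{-xb^\lambda}\bigr]^q\,\mathrm{d}x$, one is left with $\int_a^b g(z)F(z)^{q-1}\,\mathrm{d}F(z)$; an integration by parts in $z$, whose boundary terms vanish because $g(b)=0$ and $F(a)=0$, identifies this with $q^{-1}\int_a^b F(z)^q\,\mathrm{d}[-g(z)]$, which up to the value of the constant is the asserted left-hand side (the explicit $\alpha_0=\max\{2,2^{q-1}\}$ of \cite{SUsmz} arises if one replaces this clean integration by parts by a dyadic splitting of $(a,b)$ together with $(u+v)^q\ge u^q+v^q$).

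For the upper bound I would instead apply H\"older's inequality with exponents $q'$ and $q$ to the $z$-integral in the first representation of $\mathcal{L}_{(a,b)}f(x)$; this separates off the factor $\bigl(\int_a^b\lambda x z^{\lambda-1}\mathrm{e}^{-xz^\lambda}\,\mathrm{d}z\bigr)^{q-1}=\bigl(\mathrm{e}^{-xa^\lambda}-\mathrm{e}^{-xb^\lambda}\bigr)^{q-1}$ from $\int_a^b F(z)^q\lambda x z^{\lambda-1}\mathrm{e}^{-xz^\lambda}\,\mathrm{d}z$. Integrating in $x$ and interchanging the order of integration, the inner integral becomes $\int_{\mathbb{R}^+}\bigl(\mathrm{e}^{-xa^\lambda}-\mathrm{e}^{-xb^\lambda}\bigr)^{q-1}\lambda x z^{\lambda-1}\mathrm{e}^{-xz^\lambda}\,\mathrm{d}x$, which one bounds by $\tfrac{\beta_0}{q}\,\lambda z^{-\lambda-1}$ using $\int_{\mathbb{R}^+}x\,\mathrm{e}^{-xz^\lambda}\,\mathrm{d}x=z^{-2\lambda}$ and an elementary estimate of $\bigl(\mathrm{e}^{-xa^\lambda}-\mathrm{e}^{-xb^\lambda}\bigr)^{q-1}$; the split $1<q\le2$ versus $q>2$ is exactly the split between $t\mapsto t^{q-1}$ being concave or convex, and it accounts for the two formulas for $\beta_0$. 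Since $\lambda z^{-\lambda-1}\,\mathrm{d}z=\mathrm{d}[-(z^{-\lambda}-b^{-\lambda})]$, this gives the asserted right-hand side.

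The genuinely delicate point is the lower bound: one must make the \emph{whole} primitive $F(z)$ (and not merely its increments $F(z_{k+1})-F(z_k)$ over a partition) reappear inside $\|\mathcal{L}_{(a,b)}f\|_q^q$ carrying the correct Stieltjes weight $\mathrm{d}[-g]$, and the monotonicity-plus-integration-by-parts device above is precisely what effects this; the one technical nuisance there is to justify $\mathrm{d}(F^q)\ge qF^{q-1}\,\mathrm{d}F$ for a merely non-decreasing (possibly jumping) $F$ by convexity of $t\mapsto t^q$, after which the general weight is handled by approximation. The upper bound is comparatively routine once the H\"older splitting and the $\Gamma$-type integral are in place.
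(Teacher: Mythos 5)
The paper itself contains no proof of Lemma \ref{dec}: it is imported verbatim (with modification) from \cite{SUsmz}, so there is no internal argument to compare you against, and I assess your proposal on its own terms. Your reduction --- Tonelli to rewrite $\mathcal{L}_{(a,b)}f(x)$ as $\int_a^b F(z)\,\lambda x z^{\lambda-1}\mathrm{e}^{-xz^\lambda}\,\mathrm{d}z$ with $F(z)=\int_a^z fv$, then monotonicity of $y\mapsto\mathrm{e}^{-xy^\lambda}$ for the lower bound and H\"older in $z$ for the upper bound --- is precisely the mechanism by which the paper relates $\mathcal{L}$ to the Hardy operator, and both inequalities are correctly derived as far as they go. Two minor points: the approximation step for non--absolutely continuous $F$ is unnecessary, since $F$ is an indefinite integral of the non-negative function $fv$ and hence absolutely continuous wherever finite; and in the upper bound the crude estimate $\bigl(\mathrm{e}^{-xa^\lambda}-\mathrm{e}^{-xb^\lambda}\bigr)^{q-1}\le 1$ already yields $\|\mathcal{L}_{(a,b)}f\|_q^q\le\int_a^b F^q\,\mathrm{d}\bigl[-(z^{-\lambda}-b^{-\lambda})\bigr]$ with constant $1\le\beta_0/q$, so the concave/convex case split you invoke for $\beta_0$ is not actually needed.

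The genuine gap is the constant in the lower bound. Your integration by parts gives $\|\mathcal{L}_{(a,b)}f\|_q^q\ge q^{-1}\int_a^b F^q\,\mathrm{d}[-g]$, where $g(z):=\int_{\mathbb{R}^+}[\mathrm{e}^{-xz^\lambda}-\mathrm{e}^{-xb^\lambda}]^q\mathrm{d}x$, i.e.\ the constant $1/q$, and you defer the upgrade to $\alpha_0/q$ with $\alpha_0=\max\{2,2^{q-1}\}$ to an unexecuted ``dyadic splitting.'' That deferral cannot be completed as stated: $\alpha_0/q>1$ for every $q\ne 2$, whereas taking $\lambda=1$, $a=0$, $b=\infty$ and $fv=n\chi_{(1,1+1/n)}$ with $n\to\infty$ makes both $\int_0^\infty F^q\,\mathrm{d}[-g]$ and $\|\mathcal{L}f\|_q^q$ tend to $1/q$, so no constant exceeding $1$ is admissible. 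Almost certainly ``$\max$'' is a misprint for ``$\min$'': tracing the lower bound through to Theorem \ref{BC}{\rm\textbf{(i)}} with $b=\infty$, where $g(t)=q^{-1}t^{-\lambda}$, reproduces $\kappa_1=q^{-2/q}[\min\{2,2^{q-1}\}]^{1/q}$ exactly when $\alpha_0=\min\{2,2^{q-1}\}$. Even with that reading your argument loses the factor $\min\{2,2^{q-1}\}\in[1,2]$, which is harmless for every use of the lemma in this paper (only the two-sided equivalence with constants depending on $q$ is ever invoked), but if the sharp constant is wanted you must actually carry out the discretization (choosing points $z_k$ with $g(z_{k+1})=g(z_k)/2$ and using $(u+v)^q\ge u^q+v^q$) rather than gesture at it.
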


The article is organised as follows. In Section \ref{an} we obtain preliminary estimates for $a$-numbers of $\mathcal{L}.$ Section \ref{sn} is devoted to the norms of Schatten type (see Theorems \ref{main} and \ref{main'}). Section \ref{ab} is on asymptotic estimates (see Theorem \ref{kr}).

Throughout  the article products of the form $0\cdot\infty$ are supposed to be equal to 0. We write $A\ll B$ or $A\gg B$ when $A\le c_1 B$ or $A\ge c_2 B$ with constants $c_i,$ $i=1,2,$ depending on $\lambda,p,q,s$ only. $A\approx B$ means $A\ll B\ll A.$ Symbols $\mathbb{Z}$ and $\mathbb{N}$ denote integers $\{k\}$ and naturals $\{n\}$ respectively. $\chi_E$ stands for a characteristic function of a subset $E\subset\mathbb{R}^+.$ We also use $=:$ and $:=$ for marking new quantities.

\section{Approximation numbers}\label{an}
Let $(a,b)=:I\subseteq\mathbb{R}^+$ and $\mathcal{L}_b$ be an operator given by $$\mathcal{L}_bf(x):=\mathrm{e}^{-xb^\lambda} \int_{\mathbb{R}^+}f(y)v(y)\mathrm{d}y.$$ Put $$ K(I):=\bigl\{\sup \|\mathcal{L}(f\chi_I)-\mathcal{L}_b(f\chi_I)\|_{q}/\|f\|_{p,I}\colon f\in L^{p}(I)\bigr\}$$ and notice that in view of the denotation \eqref{L_a}
$$ K(I)=\bigl\{\sup\|\mathcal{L}_If\|_{q}/\|f\|_{p,I}\colon f\in L^{p}(I)\bigr\}.$$

We start from \subsection{Case $1\le p,q<\infty$}
Following the classical scheme for the Hardy integral operator (see \cite{EEH1988}, \cite{LS2000}) we define quantities
\begin{equation*}
A_{0,I}:=A_0(\delta)_I=\sup_{t\in I}
\biggl(\int_{a}^t v^{p'}(y)\mathrm{d}y\biggr)^{1/p'}
\biggr(\int_{\mathbb{R}^+} \bigl[\mathrm{e}^{-xt^\lambda}-\mathrm{e}^{-xb^\lambda}\bigr]^\delta\mathrm{d}x\biggr)^{1/q},\end{equation*}
\begin{equation*}B_{0,I}:=
B_0(\delta)_I=\biggl(\int_I
\biggl(\int_{a}^t v^{p'}(y)\mathrm{d}y\biggr)^{r/p'}
\mathrm{d}\biggl[-\biggr(\int_{\mathbb{R}^+} \bigl[\mathrm{e}^{-xt^\lambda}-\mathrm{e}^{-xb^\lambda}\bigr]^\delta\mathrm{d}x\biggr)^{r/q} \biggr]\biggr)^{1/r},\end{equation*}
\begin{equation*}
A_{1,I}=\esup_{t\in I}\ \esup_{a<y<t}v(y)\bigl[t^{-\lambda}-b^{-\lambda}\bigr]^{1/q},\ \ \
B_{1,I}=\biggl(\int_I \bigl[t^{-\lambda}-b^{-\lambda}\bigr]^{p'}v^{p'}(t) \mathrm{d}t\biggr)^{1/p'}\end{equation*} with $r=pq/(p-q),$ which sandwich the norm $K(I)$ as follows.

\begin{lemma}\label{lemma1} Let $1\le p,q<\infty.$ We have
\begin{eqnarray}\label{z}\gamma_0A_0(q)_I\le K(I)\le {\bar \gamma}_0 A_0(1)_I,\ \ \ \ \ 1<p\le q<\infty,\\
\gamma_1A_{1,I}\le K(I)\le {\bar \gamma}_1 A_{1,I},\ \ \ \ \ 1=p\le q<\infty,\\\label{zz}
\gamma_2 B_0(q)_I\le K(I)\le {\bar \gamma}_2B_0(1)_I,\ \ \ \ \ 1<q<p<\infty,\\\label{4}
  K(I)=B_{1,I},\ \ \ \ \ 1=q<p<\infty,\end{eqnarray} where $\gamma_0=\gamma_1=\alpha_0^{1/q}q^{-1/q},$ ${\bar \gamma}_0=\beta_0^{1/q}(q')^{1/p'},$ ${\bar \gamma}_1=\beta_0^{1/q}(q')^{-1/q},$
  $\gamma_2=\alpha_0^{1/q}(qp'/r)^{1/q'},$ ${\bar \gamma}_2=\beta_0^{1/q}(p')^{1/q'}.$  \end{lemma}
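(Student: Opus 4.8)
The plan is to reduce each of the four inequalities in Lemma~\ref{lemma1} to a known weighted Hardy inequality by inserting the two-sided pointwise bound on $\|\mathcal{L}_If\|_q$ supplied by Lemma~\ref{dec}, together with the elementary identity $\mathcal{L}_If = \mathcal{L}(f\chi_I) - \mathcal{L}_b(f\chi_I)$ that was recorded just before the statement. Since $K(I)$ is by definition the operator norm of $f\mapsto \mathcal{L}_If$ from $L^p(I)$ to $L^q(\mathbb{R}^+)$, and since for the purpose of computing this norm one may restrict to $f\ge 0$ (because $|\mathcal{L}_If|\le \mathcal{L}_I|f|$, the kernel $v(y)[\mathrm{e}^{-xy^\lambda}-\mathrm{e}^{-xb^\lambda}]$ being non-negative on $a<y<z<b$), Lemma~\ref{dec} applies and gives, for $1<q<\infty$,
$$\frac{\alpha_0}{q}\int_a^b\Big(\int_a^z f v\Big)^q \,d\Big[-\!\!\int_{\mathbb{R}^+}\!\big[\mathrm{e}^{-xz^\lambda}-\mathrm{e}^{-xb^\lambda}\big]^q dx\Big] \le \|\mathcal{L}_If\|_q^q \le \frac{\beta_0}{q}\int_a^b\Big(\int_a^z f v\Big)^q \,d\big[-(z^{-\lambda}-b^{-\lambda})\big].$$
Thus $K(I)^q$ is sandwiched between $(\alpha_0/q)$ times the $q$-th power of the norm of the Hardy-type operator $f\mapsto \int_a^z fv$ acting into $L^q$ with respect to the measure $d[-\!\int_{\mathbb{R}^+}(\mathrm{e}^{-xz^\lambda}-\mathrm{e}^{-xb^\lambda})^q dx]$, and $(\beta_0/q)$ times the analogous quantity with the measure $d[-(z^{-\lambda}-b^{-\lambda})]$.

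Next I would invoke the classical characterisation of boundedness of the weighted Hardy operator on the relevant range. For $1<p\le q<\infty$ the norm of $f\mapsto\int_a^z fv$ from $L^p(I)$ into $L^q(d\mu)$ is equivalent to $\sup_{t\in I}(\int_a^t v^{p'})^{1/p'}\,\mu([t,b))^{1/q}$, and applying this with $d\mu$ equal to each of the two measures above yields exactly the constants $A_0(q)_I$ (lower bound) and $A_0(1)_I$ (upper bound), after noting $\int_t^b d[-(z^{-\lambda}-b^{-\lambda})] = t^{-\lambda}-b^{-\lambda} = \int_{\mathbb{R}^+}[\mathrm{e}^{-xt^\lambda}-\mathrm{e}^{-xb^\lambda}]\,dx$, which identifies the upper measure's tail with the $\delta=1$ instance. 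The sharp constants $q^{-1/q}$ and $(q')^{1/p'}$ from the Hardy inequality, combined with $\alpha_0^{1/q}$ and $\beta_0^{1/q}$, produce $\gamma_0$ and $\bar\gamma_0$. The case $1<q<p<\infty$ is handled the same way, using instead the Maz'ya--Rozin type criterion: the norm of the Hardy operator from $L^p$ into $L^q(d\mu)$ with $q<p$ is equivalent to $(\int_I (\int_a^t v^{p'})^{r/p'}\,d[-\mu([t,b))^{r/q}])^{1/r}$ with $r=pq/(p-q)$, which gives $B_0(q)_I$ and $B_0(1)_I$ with the stated constants $\gamma_2,\bar\gamma_2$. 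The endpoint $p=1$ (so $1=p\le q$) reduces, after swapping the order of integration in $\int_a^z fv$, to an $L^\infty$-type estimate whose extremal configuration is $f$ concentrated near the point maximising $\esup_{a<y<t}v(y)\,[t^{-\lambda}-b^{-\lambda}]^{1/q}$, yielding $A_{1,I}$ with constants $\gamma_1,\bar\gamma_1$; for $q=1$ the operator is handled by a direct Fubini computation: $\|\mathcal{L}_If\|_1 = \int_a^b f(y)v(y)\int_{\mathbb{R}^+}[\mathrm{e}^{-xy^\lambda}-\mathrm{e}^{-xb^\lambda}]\,dx\,dy = \int_a^b f(y)v(y)(y^{-\lambda}-b^{-\lambda})\,dy$, whose $L^1(I)\to\mathbb{R}$, i.e.\ $L^p(I)$-dual, norm is exactly $B_{1,I}$ by Hölder, with no loss of constants, giving the equality in~\eqref{4}.

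The main obstacle I anticipate is bookkeeping of the sharp constants rather than any conceptual difficulty: one has to track precisely how $\alpha_0,\beta_0$ from Lemma~\ref{dec} interact with the best constants in the Hardy inequalities (which themselves are only sharp in the diagonal case $p=q$ and otherwise carry the standard $(q')^{1/p'}$, $(p'q/r)^{1/q'}$ factors), and to verify that the measure appearing on the lower side, $d[-\!\int_{\mathbb{R}^+}(\mathrm{e}^{-xz^\lambda}-\mathrm{e}^{-xb^\lambda})^q dx]$, is genuinely the $\delta=q$ instance built into $A_0(q)_I$ and $B_0(q)_I$. A secondary point requiring care is the reduction to $f\ge0$ together with the density/finiteness issues when $b=\infty$ or $a=0$ (handled by monotone convergence, since all kernels and measures are non-negative), and the harmless replacement of $\int_z^b$ by $\int_z^{\infty}$ in the tail of the majorising measure when $b=\infty$, which is exactly why the quantities $A_{j,I}$, $B_{j,I}$ were defined with $[t^{-\lambda}-b^{-\lambda}]$ rather than $t^{-\lambda}$.
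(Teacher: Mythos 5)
Your proposal is correct and follows essentially the same route as the paper: apply Lemma~\ref{dec} to sandwich $\|\mathcal{L}_If\|_q^q$ between Hardy-type quantities with the two measures $d[-\int_{\mathbb{R}^+}(\mathrm{e}^{-xz^\lambda}-\mathrm{e}^{-xb^\lambda})^q dx]$ and $d[-(z^{-\lambda}-b^{-\lambda})]$, then invoke the classical Muckenhoupt (for $p\le q$) and Maz'ya--Rozin (for $q<p$) characterisations, with the $p=1$ endpoint and the direct/reverse H\"{o}lder argument for $q=1$ exactly as in the paper. The only cosmetic difference is that the factor $q^{-1/q}$ in $\gamma_0$ comes from the $\alpha_0/q$ of Lemma~\ref{dec} rather than from the Hardy lower bound (whose constant is $1$), but this does not affect the argument.
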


\begin{proof} Let $1\le p\le q<\infty,$ $A_{i,I}<\infty,$ $i=0,1,$ and $f\in L^{p}(I).$ By Lemma \ref{dec}
\begin{eqnarray}\label{AB}
\|\mathcal{L}_If\|_{q}^q\le \frac{\beta_0}{q}
\int_I\biggl(\int_a^z |f(y)|v(y)\mathrm{d}y\biggr)^{q}\mathrm{d} \bigl[-(z^{-\lambda}-b^{-\lambda})\bigr] \nonumber\\\le {\bar \gamma_i}^q A_{i,I}^q\|f\|_{p,I}^q.\end{eqnarray}  For the reverse estimate we assume $f\ge 0$ and pass from \begin{eqnarray*}\|\mathcal{L}_If\|_{q}\le K(I)\|f\|_{p,I}\end{eqnarray*} to the inequality
\begin{eqnarray}\label{Har}\frac{\alpha_0}{q}\int_{I}\biggl(\int_a^z f(y)v(y)\mathrm{d}y\biggr)^q\mathrm{d}\biggl[-\biggr(\int_{\mathbb{R}^+} \bigl[\mathrm{e}^{-xz^\lambda}-\mathrm{e}^{-xb^\lambda}\bigr]^q\mathrm{d}x\biggr)\biggr]\nonumber\\\le K(I)^q\|f\|_{p,I}^q\end{eqnarray} by using Lemma \ref{dec}. Thus, $K(I)\ge \gamma_0 A_0(q)_I.$ We also have $K(I)\ge \gamma_1 A_{1,I}$ when $p=1.$

If $1<q<p<\infty$ then similar to \eqref{AB}
\begin{eqnarray}\label{09}\|\mathcal{L}_If\|_{q}^q
\le {\bar \gamma}_2^q B_0(1)^{q}_I\|f\|_{p,I}^{q}.\end{eqnarray}
 Lemma \ref{dec} and arguments for the Hardy integral operator applied to the left-hand side of the inequality \eqref{Har} give us $K(I)\ge \gamma_2 B_0(q)_I,$ $q>1.$

For $1=q<p<\infty$ we obtain \eqref{4} by direct and reverse H\"{o}lder's inequalities.
\end{proof}

Assume that $\mathcal{L}$ is compact. In view of local integrability of $v$, on the strength of the conditions \eqref{comp} and by Lemma \ref{lemma1} the quantity $K(I)$ continuously depends on an interval $I\subseteq\mathbb{R}^+.$ Thus, any given $\varepsilon>0$ such that $0<\varepsilon<\|\mathcal{L}\|_{p\to q}$ we can find $N=N(\varepsilon)$ and, therefore, points $0=c_0<c_1<\ldots <c_{N+1}=\infty$ to form intervals $I_n=[c_n,c_{n+1}],$ $k=0,\ldots,N,$ so that the norm $K(I_n)$ is equal to $\varepsilon$ for all $k=0,\ldots,N-1$ and $K(I_N)\le\varepsilon.$

\begin{lemma}\label{lemma2} Let $1\le p,q<\infty$ and $0<\varepsilon\le\|\mathcal{L}\|_{p\to q}.$ Suppose there exists $N=N(\varepsilon)<\infty$ and points $0=c_0<c_1<\ldots <c_{N+1}=\infty$ such that $K(I_n)=\varepsilon$ for all $I_n=(c_n,c_{n+1}),$ $n=0,\ldots,N-1,$ and $K(I_N)\le\varepsilon.$ Then
\begin{equation}\label{up1}
a_{N+1}(\mathcal{L})\le \begin{cases}\varepsilon, & p=1,\\ \varepsilon(N+1)^{1/p'}, & p>1.\end{cases}\end{equation}
 \end{lemma}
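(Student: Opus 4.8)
The plan is to construct an explicit finite-rank approximation to $\mathcal{L}$ built from the partition $\{c_n\}$ and control the error interval-by-interval. First I would introduce, for each $n=0,\dots,N$, the rank-one operator $R_n f(x):=\mathrm{e}^{-x c_{n+1}^\lambda}\int_{c_n}^{c_{n+1}} f(y)v(y)\,\mathrm{d}y = \mathcal{L}_{c_{n+1}}(f\chi_{I_n})(x)$, and set $K:=\sum_{n=0}^{N} R_n$, so that $\mathrm{rank}\,K\le N+1$. Then on each $I_n$ we have $\mathcal{L}(f\chi_{I_n})-R_n f = \mathcal{L}_{I_n}(f\chi_{I_n})$ in the notation of \eqref{L_a}, whence by the definition of $K(I_n)$,
\begin{equation*}
\|\mathcal{L}(f\chi_{I_n})-R_n f\|_q \le K(I_n)\,\|f\|_{p,I_n}\le \varepsilon\,\|f\|_{p,I_n}.
\end{equation*}

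Next I would sum the contributions. Writing $\mathcal{L}f-Kf=\sum_{n=0}^N(\mathcal{L}(f\chi_{I_n})-R_n f)$ and using the triangle inequality in $L^q$ together with the quasi-triangle inequality if $q<1$ — but actually the cleaner route for general $q$ is to note the ranges of $x$ can be handled via the support structure, or simply to apply subadditivity of $\|\cdot\|_q^{\min\{1,q\}}$; however since the statement only needs the $p=1$ and $p>1$ dichotomy, the decisive tool is the $\ell^p$-to-$\ell^q$ interplay on the pieces. For $p=1$: since the $I_n$ partition $\mathbb{R}^+$, $\sum_n\|f\|_{1,I_n}=\|f\|_{1,\mathbb{R}^+}$, and one estimates $\|\mathcal{L}f-Kf\|_q\le\big(\sum_n\|\mathcal{L}(f\chi_{I_n})-R_nf\|_q^{\min\{1,q\}}\big)^{1/\min\{1,q\}}$; but this still needs care. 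The slicker argument when $p=1$: for each fixed $x$, $\mathcal{L}f-Kf$ restricted appropriately, and one uses that the operator norm from $L^1$ is the supremum of the norms of the pieces — so $\|\mathcal{L}-K\|_{L^1\to L^q}\le\sup_n K(I_n)\le\varepsilon$ directly, giving $a_{N+1}(\mathcal{L})\le\varepsilon$. For $p>1$ one applies Hölder: $\sum_n\|f\|_{p,I_n}\cdot 1 \le (N+1)^{1/p'}\big(\sum_n\|f\|_{p,I_n}^p\big)^{1/p}=(N+1)^{1/p'}\|f\|_{p,\mathbb{R}^+}$, combined with $\|\mathcal{L}f-Kf\|_q\le\sum_n\|\mathcal{L}(f\chi_{I_n})-R_nf\|_q$ (for $q\ge1$) or the appropriate $q$-subadditive version, to conclude $\|\mathcal{L}-K\|_{L^p\to L^q}\le\varepsilon(N+1)^{1/p'}$, hence $a_{N+1}(\mathcal{L})\le\varepsilon(N+1)^{1/p'}$.

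The main obstacle I anticipate is the summation step in $L^q$ when $q<1$ (quasi-Banach) and, more subtly, the fact that the functions $\mathcal{L}(f\chi_{I_n})-R_nf$ are \emph{not} disjointly supported in $x$ — each lives on all of $\mathbb{R}^+$ — so one cannot gain an $\ell^q$-orthogonality in the target. One must therefore pay a genuine triangle-inequality (or $q$-triangle-inequality, with constant $2^{(1-q)/q}$ or similar) cost when passing from the pieces to the whole; the clean bound $\varepsilon(N+1)^{1/p'}$ suggests that the authors route the $q$-side through the pointwise Hardy-type estimate of Lemma \ref{dec} \emph{before} summing, i.e. they reduce $\|\mathcal{L}f-Kf\|_q^q$ to an integral of $\big(\int_0^z|f|v\big)^q$ against a single monotone measure over all of $\mathbb{R}^+$, where additivity over the $I_n$ is automatic, and only then invoke Hölder in the $p$-variable. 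So the refined plan is: (1) define $K=\sum_n R_n$; (2) apply Lemma \ref{dec} globally to bound $\|\mathcal{L}f-Kf\|_q^q$ by a sum over $n$ of the local Hardy functionals; (3) bound each local functional by $K(I_n)^q\|f\|_{p,I_n}^q=\varepsilon^q\|f\|_{p,I_n}^q$; (4) sum and apply Hölder's inequality with exponents $p/q$ and $(p/q)'$ — note $(p/q)'$ is finite precisely because $q<p$ is \emph{not} assumed here, so when $p\le q$ one instead uses monotonicity $\|f\|_{p,I_n}\le\|f\|_{p,\mathbb{R}^+}$ trivially summed, which is exactly where the $p=1$ versus $p>1$ split with exponent $1/p'$ emerges (for $p=1$, $\sum\|f\|_{1,I_n}=\|f\|_{1}$ with no loss; for $p>1$, the $(N+1)^{1/p'}$ factor). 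Assembling these gives \eqref{up1}.
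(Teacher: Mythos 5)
Your construction is exactly the paper's: the paper defines $Pf=\sum_{n=0}^{N}\mathcal{L}_{c_{n+1}}(f\chi_{I_n})$, bounds $\|\mathcal{L}f-Pf\|_q\le\sum_n\|\mathcal{L}_{I_n}(f\chi_{I_n})\|_q\le\varepsilon\sum_n\|f\|_{p,I_n}$ by the triangle inequality, and finishes with $\sum_n\|f\|_{1,I_n}=\|f\|_1$ for $p=1$ and with H\"older's inequality (giving the factor $(N+1)^{1/p'}$) for $p>1$. Two remarks, one of which is a genuine (if small) gap.

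First, the rank count. You assert $\mathrm{rank}\,K\le N+1$, but $a_{N+1}(\mathcal{L})$ requires an approximant of rank $<N+1$, i.e.\ rank at most $N$; as written your argument only yields $a_{N+2}(\mathcal{L})\le\varepsilon(N+1)^{1/p'}$. The missing observation is that the last summand vanishes identically: since $c_{N+1}=\infty$, one has $\mathrm{e}^{-xc_{N+1}^\lambda}=0$ for $x>0$, so $R_N=\mathcal{L}_{c_{N+1}}(\cdot\,\chi_{I_N})\equiv 0$ and $\mathrm{rank}\,K\le N$. The paper states this explicitly; note that the error term $\mathcal{L}(f\chi_{I_N})-R_Nf=\mathcal{L}(f\chi_{I_N})=\mathcal{L}_{I_N}(f\chi_{I_N})$ is still controlled by $K(I_N)\le\varepsilon$, so nothing else changes. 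Second, your lengthy detour about $q<1$ and the lack of disjoint supports in the target is moot here: the lemma assumes $1\le q<\infty$, so the plain triangle inequality in $L^q$ suffices (the case $0<q<1$ is treated separately in Lemma \ref{lemma04}, where the paper indeed works with $\|\cdot\|_q^q$ and pays the corresponding price in the exponent). Your ``refined plan'' of routing everything through Lemma \ref{dec} before summing is unnecessary for the present statement.
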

\begin{proof} Let $f\in L^{p}$ be such that $\|f\|_{p}=1,$ and define $P\colon L^{p}\to L^{q}$ by \begin{equation}\label{P}
Pf(x)=\sum_{n=0}^{N}\mathcal{L}_{c_{n+1}}(f\chi_{I_n})(x).
\end{equation} Since $\mathcal{L}_{c_{N+1}}(f\chi_{I_n})\equiv 0$ then $\textrm{rank}\,P\le N.$ 
We have \begin{eqnarray*}
\|\mathcal{L}f-Pf\|_{q}=\|\sum_{n=0}^N[\mathcal{L}(f\chi_{I_n})-\mathcal{L}_{c_{n+1}}(f\chi_{I_n})]\|_{q}\le
\sum_{n=0}^N\|\mathcal{L}_{I_n}(f\chi_{I_n})\|_{q}\\
\le \sum_{n=0}^{N} K(I_n)\|f\|_{p,I_n}\le \varepsilon \sum_{n=0}^{N}\|f\|_{p,I_n}.
\end{eqnarray*} This immediately implies \eqref{up1} for $p=1.$ If $p>1$ then \eqref{up1} follows by H\"{o}lder's inequality.
 \end{proof}

\subsection{Case $0<q<1\le p<\infty$} In this part we operate with the constant $B_0(\delta)_I,$ when $\delta=q,$ and quantities $r=pq/(p-q),$
\begin{equation*}
B_{2,I}=\biggl(\int_I v^{p'}(y) \biggl(\int_{\mathbb{R}^+} \bigl[\mathrm{e}^{-xy^\lambda}-\mathrm{e}^{-xb^\lambda}\bigr]^q\mathrm{d}x\biggr)^{p'/q} \mathrm{d}y\biggr)^{1/p'},\end{equation*}
\begin{equation*}
B_{3,I}=\biggl(\int_I[\esup_{a<x<t}v(x)]^{q/(1-q)}\mathrm{d}\biggl[-\biggr(\int_{\mathbb{R}^+} \bigl[\mathrm{e}^{-xt^\lambda}-\mathrm{e}^{-xb^\lambda}\bigr]^q\mathrm{d}x\biggr)^{1/(1-q)} \biggr]\biggr)^{(1-q)/q},\end{equation*}
\begin{equation*}
B_{4,I}=\esup_{t\in I} \biggl[\esup_{a<x<t}v(x)\biggr(\int_{\mathbb{R}^+} \bigl[\mathrm{e}^{-xt^\lambda}-\mathrm{e}^{-xb^\lambda}\bigr]^q\mathrm{d}x\biggr)^{1/q}\biggr].\end{equation*}

The analog of Lemma \ref{lemma1} for $0<q<1\le p<\infty$ reads
\begin{lemma}\label{lemma03} Let $0<q<1\le p<\infty.$ We have
\begin{eqnarray}\label{5}B_{2,I}\le K(I)\le {\bar \gamma_3} B_0(q)_I, \ \ \ \ \ 0<q<1<p<\infty,\\\label{6}
B_{4,I}\le K(I)\le {\bar \gamma}_4B_{3,I}, \ \ \ \ \ 0<q<1=p,\end{eqnarray} where  ${\bar \gamma}_3=(r/q)^{1/r-1}p^{1/p}(p')^{1/p'}q^{-1/q}$ and ${\bar \gamma}_4=(1-q)^{-(1-q)/q}.$
\end{lemma}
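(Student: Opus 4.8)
The plan is to prove Lemma \ref{lemma03} by the same device used in Lemma \ref{lemma1}: convert the norm $K(I) = \sup\|\mathcal{L}_If\|_q/\|f\|_{p,I}$ into a Hardy-type inequality via Lemma \ref{dec}, and then invoke the known sharp criteria for the weighted Hardy operator in the regime $0<q<1\le p<\infty$. Recall that for $f\ge 0$ and $1<q<\infty$ Lemma \ref{dec} sandwiches $\|\mathcal{L}_{(a,b)}f\|_q^q$ between two Hardy-type functionals; although Lemma \ref{dec} is stated for $1<q<\infty$, the right-hand (upper) estimate there comes from the elementary pointwise bound $0\le \mathrm{e}^{-xz^\lambda}-\mathrm{e}^{-xb^\lambda}\le x z^\lambda \mathrm{e}^{-xb^\lambda}$ type manipulations and a one-dimensional Minkowski/triangle argument, which remains available for $0<q<1$ after the usual adjustment of constants; the lower estimate likewise survives because it only uses monotonicity of $z\mapsto\int_a^z fv$. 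So the first step is to record the analogue of \eqref{AB} and \eqref{Har} valid for $0<q<1$, namely
\[
\|\mathcal{L}_If\|_q^q \asymp \int_I\Bigl(\int_a^z f(y)v(y)\,\mathrm{d}y\Bigr)^q \mathrm{d}\Bigl[-\Bigl(\int_{\mathbb{R}^+}\bigl[\mathrm{e}^{-xz^\lambda}-\mathrm{e}^{-xb^\lambda}\bigr]^q\mathrm{d}x\Bigr)\Bigr],
\]
with the left inequality holding with constant $\alpha_0/q$ and the right with an explicit constant that will feed into ${\bar\gamma}_3,{\bar\gamma}_4$.

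Second, I would identify which Hardy inequality this is. Writing $w(z)\,\mathrm{d}z := \mathrm{d}\bigl[-\int_{\mathbb{R}^+}(\mathrm{e}^{-xz^\lambda}-\mathrm{e}^{-xb^\lambda})^q\mathrm{d}x\bigr]$ on $I$ and $u:=v^{p'}$, the functional on the right is $\int_I(\int_a^z g)^q w$ against $\|g\|_{L^{p'}(u^{1-p'}?)}$... more precisely after the substitution $g = fv$ the inequality to analyse is $\bigl(\int_I(\int_a^z fv)^q w\bigr)^{1/q}\le C\|f\|_{p,I}$. For $0<q<1<p<\infty$ the sharp characterisation (the ``$r$-condition'', $1/r=1/q-1/p$) of such a weighted Hardy inequality is classical: boundedness holds with the best constant comparable to
\[
\Bigl(\int_I \Bigl(\int_z^{\,b} w\Bigr)^{r/q}\Bigl(\int_a^z v^{p'}\Bigr)^{r/q'} v^{p'}(z)\,\mathrm{d}z\Bigr)^{1/r}
\]
on the one hand, and also comparable (dual form) to the expression defining $B_{2,I}$ and $B_0(q)_I$ on the other; matching these two forms by integration by parts and Fubini is exactly what produces the claimed two-sided bound $B_{2,I}\le K(I)\le{\bar\gamma}_3 B_0(q)_I$ with the stated ${\bar\gamma}_3=(r/q)^{1/r-1}p^{1/p}(p')^{1/p'}q^{-1/q}$. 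For the endpoint $0<q<1=p$ (so $p'=\infty$), the relevant Hardy inequality is the $L^1\to L^q$ one, whose sharp constant is the ``$B_{q'}$-type'' expression: the upper bound is driven by $\esup_{a<x<z}v(x)$ integrated against $w^{1/(1-q)}$, giving $B_{3,I}$ with ${\bar\gamma}_4=(1-q)^{-(1-q)/q}$, while the lower bound is the plain supremum $B_{4,I}$, obtained by testing on functions concentrated near a single point $t\in I$. I would cite the standard references for these Hardy criteria (the same ones used for Theorem \ref{BC}) rather than reprove them.

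Third, the necessary lower bounds $B_{2,I}\le K(I)$ and $B_{4,I}\le K(I)$ I would get directly, not through Lemma \ref{dec}'s lower half, because testing is cleaner: for \eqref{5}, plug $f = v^{p'-1}\chi_{(a,z)}\,\bigl(\int_{\mathbb{R}^+}[\mathrm{e}^{-xz^\lambda}-\mathrm{e}^{-xb^\lambda}]^q\mathrm{d}x\bigr)^{1/q}\cdot(\text{weight})$ and optimise, or more simply use the pointwise lower bound $\mathcal{L}_If(x)\ge \int_a^b f v\,[\mathrm{e}^{-xb^{\prime\lambda}}-\mathrm{e}^{-xb^\lambda}]$ on subintervals; for \eqref{6}, since $p=1$, choosing $f$ an approximate delta at $t$ and using $\|\mathcal{L}_I f\|_q \ge \esup_{a<x<t}v(x)\bigl(\int_{\mathbb{R}^+}[\mathrm{e}^{-xt^\lambda}-\mathrm{e}^{-xb^\lambda}]^q\mathrm{d}x\bigr)^{1/q}\|f\|_{1,I}$ gives $B_{4,I}$ immediately after taking the essential supremum over $t\in I$.

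**The main obstacle** I anticipate is not the Hardy machinery itself but legitimising the use of Lemma \ref{dec}'s upper estimate (and its companion lower estimate) in the range $0<q<1$, where Lemma \ref{dec} as stated requires $q>1$: one must either re-derive the inequality $\|\mathcal{L}_If\|_q^q\ll \int_I(\int_a^z fv)^q \mathrm{d}[-(\cdot)]$ from scratch using the subadditivity $(\sum a_j)^q\le\sum a_j^q$ valid for $q\le 1$ (which actually makes the upper bound \emph{easier}, with the better constant $\beta_0=1$ effectively), while for the lower bound one reverses the roles and loses a factor that must be tracked to obtain the sharp ${\bar\gamma}_3,{\bar\gamma}_4$. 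Keeping the constants honest — propagating $\alpha_0,\beta_0$ from Lemma \ref{dec} through the Hardy-constant identities to land exactly on $(r/q)^{1/r-1}p^{1/p}(p')^{1/p'}q^{-1/q}$ and $(1-q)^{-(1-q)/q}$ — is the fiddly part; the structural argument is a routine transcription of the $1\le p,q<\infty$ case treated in Lemma \ref{lemma1}.
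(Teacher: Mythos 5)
Your upper-bound strategy coincides with the paper's: for $0<q<1$ one replaces Lemma \ref{dec} by the elementary monotonicity/subadditivity estimate $\|\mathcal{L}_If\|_q^q\le \frac{1}{q}\int_I\bigl(\int_a^z|f|v\bigr)^q\,\mathrm{d}\bigl[-\int_{\mathbb{R}^+}(\mathrm{e}^{-xz^\lambda}-\mathrm{e}^{-xb^\lambda})^q\mathrm{d}x\bigr]$ (the paper simply cites \cite[p.~1131]{SUsmz} for this), and then applies the known $L^p\to L^q$ Hardy criteria in the regime $0<q<1<p$ (producing $B_0(q)_I$ with ${\bar\gamma}_3$) and at the endpoint $p=1$ (producing $B_{3,I}$ with ${\bar\gamma}_4$). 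That half of your plan is sound and is essentially what the paper does.

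The gap is in your lower bound for \eqref{5}. The quantity $B_{2,I}=\bigl(\int_I v^{p'}(y)\bigl(\int_{\mathbb{R}^+}[\mathrm{e}^{-xy^\lambda}-\mathrm{e}^{-xb^\lambda}]^q\mathrm{d}x\bigr)^{p'/q}\mathrm{d}y\bigr)^{1/p'}$ is an integral over all of $I$, and no single test function supported on $(a,z)$, nor a pointwise bound ``on subintervals,'' will produce it: those devices yield sup-type quantities of the form $A_0(q)_I$, which do not dominate $B_{2,I}$. The missing idea --- and what the paper's terse ``Minkovskii's and H\"older's inequalities'' actually refers to --- is the \emph{reverse} Minkowski integral inequality for $0<q<1$ applied to the nonnegative kernel: $\|\mathcal{L}_If\|_q=\bigl\|\int_a^b f(y)v(y)[\mathrm{e}^{-\cdot\, y^\lambda}-\mathrm{e}^{-\cdot\, b^\lambda}]\mathrm{d}y\bigr\|_q\ge\int_a^b f(y)v(y)\bigl(\int_{\mathbb{R}^+}[\mathrm{e}^{-xy^\lambda}-\mathrm{e}^{-xb^\lambda}]^q\mathrm{d}x\bigr)^{1/q}\mathrm{d}y$ for $f\ge0$; taking the supremum over $\|f\|_{p,I}\le1$ (H\"older duality in $L^p(I)$) then gives exactly $B_{2,I}$, and the same superadditivity with $f$ an approximate delta gives $B_{4,I}$ when $p=1$. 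Relatedly, your suggestion that the ``dual form'' $B_{2,I}$ and the integral condition $B_0(q)_I$ can be matched by integration by parts and Fubini into a single two-sided characterization is incorrect: for $0<q<1<p$ these two quantities are not equivalent in general (compare Theorem \ref{BC}(iii), where $B_\mathcal{L}$ is sufficient and $\|B_q\|_{p'}$ is necessary), which is precisely why the lemma states different expressions on the two sides of $K(I)$.
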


\begin{proof} Since $q<1$ we have  by a monotonicity  argument
\begin{eqnarray*}
\|\mathcal{L}_If\|_{q}^q\le\frac{1}{q}
\int_a^c \biggl(\int_a^z |f(y)|v(y)\mathrm{d}y\biggr)^{q}\mathrm{d} \biggl[-\biggr(\int_{\mathbb{R}^+} \bigl[\mathrm{e}^{-xz^\lambda}-\mathrm{e}^{-xc^\lambda}\bigr]^q\mathrm{d}x\biggr) \biggr]\end{eqnarray*} (see \cite[p. 1131]{SUsmz} for details). Therefore, \begin{eqnarray*}
K(I)^q\le\begin{cases} {\bar \gamma}_3^q B_0^q(q)_I\|f\|_{p,I}^q, &p>1\\
{\bar \gamma}_4^q B_{3,I}^q\|f\|_{1,I}^q, &p=1\end{cases}.\end{eqnarray*}
For the reverse estimate we assume $f\ge 0$ and complete the proof of \eqref{5} and \eqref{6} by using Minkovskii's and H\"{o}lder's inequalities.\end{proof}

If $\mathcal{L}$ is compact and $0<\varepsilon<\|\mathcal{L}\|_{p\to q}$ then by Lemma \ref{lemma03} the norm $K(I),$ as before, continuously depends on an interval $I.$ Thus, for a sufficiently small $\varepsilon>0$ we can find points $0=c_0<c_1<\ldots <c_{N+1}=\infty,$ $N=N(\varepsilon),$ with $K(I_n)=\varepsilon$ for $n=0,\ldots,N-1$ and $K(I_N)\le\varepsilon.$

\begin{lemma}\label{lemma04}
Let $0<q<1\le p<\infty.$ Let $0<\varepsilon\le\|\mathcal{L}\|_{p\to q}$ and suppose there exists $N(\varepsilon)<\infty$ and points $0=c_0<c_1<\ldots <c_{N+1}=\infty$ such that $K(I_n)=\varepsilon$ for all $I_n=(c_n,c_{n+1}),$ $n=1,\ldots,N,$ but $K(I_0)\le\varepsilon.$ Then
\begin{equation}
a_{N+1}(\mathcal{L})\le \begin{cases}\varepsilon(N+1)^{1/r}, & 1<q<1<p<\infty\\\varepsilon(N+1)^{(1-q)/q}, & 0<q<1=p\end{cases}.
\end{equation}
 \end{lemma}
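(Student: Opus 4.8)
The plan is to mimic exactly the proof of Lemma \ref{lemma2}, adapting only the final application of an embedding/summation inequality to the quasi-Banach range $0<q<1$. First I would take $f\in L^p$ with $\|f\|_p=1$ and define the finite-rank operator $P\colon L^p\to L^q$ by the same formula \eqref{P}, namely $Pf(x)=\sum_{n=0}^{N}\mathcal{L}_{c_{n+1}}(f\chi_{I_n})(x)$, noting as before that $\mathcal{L}_{c_{N+1}}(f\chi_{I_N})\equiv 0$ forces $\operatorname{rank}P\le N$, so that $a_{N+1}(\mathcal{L})\le\|\mathcal{L}-P\|_{p\to q}$. The pointwise identity $\mathcal{L}f-Pf=\sum_{n=0}^{N}\mathcal{L}_{I_n}(f\chi_{I_n})$ is unchanged; only the passage from this sum of functions to a sum of norms now has to respect that $\|\cdot\|_q$ is merely a quasi-norm.

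The key step is therefore the following estimate: since $0<q<1$, the functions $\mathcal{L}_{I_n}(f\chi_{I_n})$ have essentially disjoint ranges of concentration (each $\mathcal{L}_{I_n}$ produces, via Lemma \ref{dec}, a profile governed by the interval $I_n$), but more robustly one simply uses the elementary inequality $\bigl(\sum_n|g_n|\bigr)^q\le\sum_n|g_n|^q$ valid for $0<q\le 1$, which gives
\begin{equation*}
\|\mathcal{L}f-Pf\|_q^q=\int_{\mathbb{R}^+}\Bigl|\sum_{n=0}^{N}\mathcal{L}_{I_n}(f\chi_{I_n})(x)\Bigr|^q\mathrm{d}x\le\sum_{n=0}^{N}\|\mathcal{L}_{I_n}(f\chi_{I_n})\|_q^q\le\sum_{n=0}^{N}K(I_n)^q\|f\|_{p,I_n}^q\le\varepsilon^q\sum_{n=0}^{N}\|f\|_{p,I_n}^q,
\end{equation*}
using Lemma \ref{lemma03} at the penultimate step. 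Thus $\|\mathcal{L}f-Pf\|_q\le\varepsilon\bigl(\sum_{n=0}^{N}\|f\|_{p,I_n}^q\bigr)^{1/q}$.

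It remains to bound $\bigl(\sum_{n=0}^{N}\|f\|_{p,I_n}^q\bigr)^{1/q}$ by a constant times $\|f\|_p=1$. When $p=1$ the inner norms are $\ell^1$-additive, $\sum_n\|f\|_{1,I_n}=\|f\|_1=1$, and since $q<1$ we have $\sum_n\|f\|_{1,I_n}^q\le\bigl(\sum_n\|f\|_{1,I_n}\bigr)^q\cdot(N+1)^{1-q}$ by H\"older (or directly by the power-mean inequality), giving the factor $(N+1)^{(1-q)/q}$. When $1<p<\infty$ one combines two applications of H\"older: first $\sum_n\|f\|_{p,I_n}^q\le\bigl(\sum_n\|f\|_{p,I_n}^p\bigr)^{q/p}(N+1)^{1-q/p}=\|f\|_p^q(N+1)^{1-q/p}$, so that $\bigl(\sum_n\|f\|_{p,I_n}^q\bigr)^{1/q}\le(N+1)^{1/q-1/p}$, and since $1/q-1/p=1/r$ with $r=pq/(p-q)$ this yields the factor $(N+1)^{1/r}$. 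Putting the pieces together gives $a_{N+1}(\mathcal{L})\le\varepsilon(N+1)^{1/r}$ for $1<p<\infty$ and $a_{N+1}(\mathcal{L})\le\varepsilon(N+1)^{(1-q)/q}$ for $p=1$. I do not expect a serious obstacle here; the only point requiring care is keeping the $q<1$ quasi-triangle inequality in play throughout rather than the ordinary triangle inequality, and correctly bookkeeping the two distinct H\"older exponents so that the exponents $1/r$ and $(1-q)/q$ come out exactly as stated.
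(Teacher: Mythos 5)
Your proof is correct and follows essentially the same route as the paper: the same rank-$N$ operator $P$ from \eqref{P}, the $q$-subadditivity $(\sum|g_n|)^q\le\sum|g_n|^q$ to split the quasi-norm, the bound $K(I_n)\le\varepsilon$, and a final H\"older step producing the factors $(N+1)^{1/r}$ and $(N+1)^{(1-q)/q}$. The only difference is cosmetic (you write the H\"older exponent as $1-q/p$ where the paper writes $q/r$, which are equal).
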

\begin{proof}
Taking the operator \eqref{P} with $\mathrm{rank}\,P\le N$ we obtain provided $q<1$ \begin{eqnarray*}
\|\mathcal{L}f-Pf\|^q_{q}
\le \sum_{n=0}^{N}\|\mathcal{L}_{I_n}|f|\|_{q}^q\le\sum_{n=0}^{N}K(I_n)^q\|f\|_{q,I_n}^q
\le \varepsilon^q\sum_{n=0}^N\|f\|_{p,I_n}^q.
\end{eqnarray*} The Holder's inequality completes the proof of the theorem as follows \begin{equation}
\|\mathcal{L}f-Pf\|^q_{q}\le\begin{cases} \varepsilon^q\bigl(\sum_{n=0}^N\|f\|_{p,I_n}^p\bigr)^{q/p}(N+1)^{q/r}, & p>1\\\varepsilon^q\bigl(\sum_{n=0}^N\|f\|_{1,I_n}\bigr)^{q}(N+1)^{1-q}, & p=1\end{cases}.\end{equation}\end{proof}

\section{Schatten type norm estimates}\label{sn}

\subsection{Case $p>1$}
Denote $r:=pq/(p-q),$ $\theta:=p'q/(p'+q),$ $\Delta_k:=[2^{k-1},2^k],$ $\Omega(l,m):=\bigcup_{l\le k\le m-1}\Delta_k$ with integers $l<m$ and
$$\sigma_k(\delta):=\biggl(\int_{\mathbb{R}^+}[\mathrm{e}^{-x2^{k\lambda}}- \mathrm{e}^{-x2^{(k+1)\lambda}}]^\delta\mathrm{d}x\biggr)^{1/q}
\biggl(\int_{\Delta_k}v^{p'}(y)\mathrm{d}y\biggr)^{1/p'},$$
\begin{eqnarray*}J_s(l,m):&=&\biggl(\int_{\Omega(l,m)} \biggl(\int_{\mathbb{R}^+}[\mathrm{e}^{-xt^{\lambda}}- \mathrm{e}^{-x2^{m\lambda}}]^\delta\mathrm{d}x\biggr)^{s/q}\biggr.\\  \times&&\!\!\!\!\!\!\!\!\!\!\!\!\!\!\!\!\biggl.\biggl[\int_{0}^{t}\chi_{\Omega(l,m)}(y)v^{p'}(y) \mathrm{d}y\biggr]^{s/p'-1}v^{p'}(t)\mathrm{d}t\biggr)^{1/s},\ \ \ \delta=\begin{cases} 1, &q\ge 1\\ q, & q<1\end{cases},\end{eqnarray*}
$$J_s(-\infty,+\infty)=:J_s=\biggl(\int_{\mathbb{R}^+} t^{-\lambda s/q}\biggl[\int_{0}^{t}v^{p'}(y) \mathrm{d}y\biggr]^{s/p'-1}v^{p'}(t)\mathrm{d}t\biggr)^{1/s},$$ $$\Lambda_s(\delta)_{(l,m)}:=\biggl(\sum_{k=l}^{m-1}\sigma_k^s(\delta)\biggr)^{1/s},\ \ \
\Lambda_s(\delta):=\Lambda_s(\delta)_{(-\infty,+\infty)}:=\biggl(\sum_{k\in\mathbb{Z}}\sigma_k^s(\delta)\biggr)^{1/s}.$$
The result of the section reads

\begin{theorem}\label{main}
Let the operator $\mathcal{L}\colon L^p\to L^q$ be compact and $s>\theta$.\\
{\rm\textbf{(i)}}  If $1<p\le q<\infty$ then
\begin{eqnarray*}\label{star3} \biggl(\sum_{n\in\mathbb{N}} a_n^s(\mathcal{L})n^{-s/p'}\biggr)^{1/s} \le
\textrm{const}(p,q,s,\lambda)\ J_s.\end{eqnarray*}
{\rm\textbf{(ii)}} If $1<q<p<\infty$ then
\begin{eqnarray*}\label{star4} \biggl(\sum_{n\in\mathbb{N}} a_n^s(\mathcal{L})n^{-s/p'}\biggr)^{1/s} \le
\textrm{const}(p,q,s,\lambda)\begin{cases} J_s, & \theta<s\le r,\\
J_r, & s>r.\end{cases}\end{eqnarray*}
{\rm\textbf{(iii)}} If $0<q<1<p<\infty$ then
\begin{eqnarray*}\label{star5} \biggl(\sum_{n\in\mathbb{N}} a_n^s(\mathcal{L})n^{s/p-s/q}\biggr)^{1/s} \le
\textrm{const}(p,q,s,\lambda)\begin{cases} J_s, & \theta<s\le r,\\
J_r, & s>r.\end{cases}\end{eqnarray*}
\end{theorem}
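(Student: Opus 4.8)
\textbf{Proof proposal for Theorem \ref{main}.}

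The plan is to run a dyadic decomposition argument in the spirit of \cite{EEH1988}, \cite{LS2000}, coupled with the covering/counting Lemmas \ref{lemma2} and \ref{lemma04}. First I would fix $\varepsilon>0$ and invoke compactness of $\mathcal{L}$ together with the continuity of $I\mapsto K(I)$ established after Lemma \ref{lemma1} (resp. Lemma \ref{lemma03}) to produce a partition $0=c_0<c_1<\dots<c_{N+1}=\infty$ with $K(I_n)=\varepsilon$ for $n=0,\dots,N-1$ and $K(I_N)\le\varepsilon$, so that Lemma \ref{lemma2} (resp. Lemma \ref{lemma04}) gives $a_{N(\varepsilon)+1}(\mathcal{L})\ll\varepsilon\,(N(\varepsilon)+1)^{1/p'}$ in case (i)--(ii) and $\ll\varepsilon\,(N(\varepsilon)+1)^{1/r}$ in case (iii). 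The whole problem is thereby reduced to an upper bound for the counting function $N(\varepsilon)$ in terms of the integral $J_s$. The key is that on each $I_n$ we have $K(I_n)\ge\varepsilon$, and by the lower bounds in Lemmas \ref{lemma1}, \ref{lemma03} this forces $A_0(q)_{I_n}$ (or $B_0(q)_{I_n}$, or $B_{2,I_n}$) to be $\gg\varepsilon$; combining these local lower bounds over all $n$ and comparing with the global quantity $J_s$ yields a bound of the form $N(\varepsilon)\ll\varepsilon^{-s}J_s^s$ after a suitable discretisation.

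The next step is the discretisation itself. On the dyadic grid $\Delta_k=[2^{k-1},2^k]$ one has $\sigma_k(\delta)\approx A_0(\delta)_{\Delta_k}$ and, summing, $\Lambda_s(\delta)\approx J_s$ for the relevant $\delta$ (this is the standard equivalence between the continuous Muckenhoupt-type integral and its dyadic sampled sum, using monotonicity of $t\mapsto\int_0^t v^{p'}$ and of $t\mapsto\int_{\mathbb{R}^+}[\mathrm{e}^{-xt^\lambda}-\mathrm{e}^{-xb^\lambda}]^\delta\mathrm{d}x$, together with the elementary estimate $\int_{\mathbb{R}^+}\mathrm{e}^{-\delta xt^\lambda}\mathrm{d}x\approx t^{-\lambda}$). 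One then shows: if $K(I)\ge\varepsilon$ for an interval $I$, then $I$ can meet at most a bounded number of dyadic blocks $\Delta_k$ on which $\sigma_k(\delta)$ is comparably large — more precisely one proves a "Besov-type" or splitting inequality bounding $K(\Omega(l,m))$ from above by a block-sum like $\bigl(\sum_{l\le k<m}\sigma_k^s(\delta)\bigr)^{1/s}$ up to constants (this is where $s>\theta$ enters, through an interpolation/embedding $\ell^\theta\hookrightarrow\ell^s$ type argument combined with the upper half of Lemma \ref{lemma1}/\ref{lemma03}). Turning this around, each $I_n$ carries a definite amount of $\sum_k\sigma_k^s(\delta)$-mass bounded below by a multiple of $\varepsilon^s$, the intervals $I_n$ are essentially disjoint, and hence $N(\varepsilon)\,\varepsilon^s\ll\sum_{k\in\mathbb{Z}}\sigma_k^s(\delta)\approx J_s^s$, i.e. $N(\varepsilon)\ll(J_s/\varepsilon)^s$.

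Feeding this back, set $\varepsilon_n$ so that $n\approx N(\varepsilon_n)\ll(J_s/\varepsilon_n)^s$, i.e. $\varepsilon_n\ll J_s\,n^{-1/s}$, whence $a_{n+1}(\mathcal{L})\ll\varepsilon_n\,n^{1/p'}\ll J_s\,n^{1/p'-1/s}$ in case (i), and similarly $a_{n+1}(\mathcal{L})\ll J_s\,n^{1/r-1/s}$ in case (iii) (and in case (ii) with $J_r$ replacing $J_s$ when $s>r$, using $\ell^r\hookrightarrow\ell^s$). Raising to the $s$-th power and summing, $\sum_n a_n^s(\mathcal{L})n^{-s/p'}\ll J_s^s\sum_n n^{-1}$, which diverges — so the crude bound is not enough and one must instead sum \emph{locally}: on the block $\Omega(l,m)$ the same argument gives $a_{n}\ll J_s(l,m)\,(\text{index})^{\ldots}$ with the \emph{localised} constant $J_s(l,m)$, and a dyadic-in-$n$ summation (Abel summation / the standard trick of summing $a_n^s n^{-s/p'}$ over the $n$ produced within each scale block and then over blocks) converts the localised bounds $J_s(l,m)$ back into the single global integral $J_s^s$ via the superadditivity $\sum_{\text{blocks}}J_s(l_j,m_j)^s\le J_s^s$. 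I expect the main obstacle to be exactly this last bookkeeping step: organising the blocks, matching the count of $a$-numbers per block against the localised constants $J_s(l,m)$, and verifying that the weights $n^{-s/p'}$ (resp. $n^{s/p-s/q}$) are absorbed correctly so that the telescoping/summation yields $\mathrm{const}(p,q,s,\lambda)\,J_s$ rather than a logarithmically divergent bound; getting the interplay between $s$, $p'$ and $r$ right (and the case split $s\le r$ versus $s>r$) is the delicate part.
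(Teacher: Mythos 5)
You have the right skeleton --- the partition with $K(I_n)=\varepsilon$, the rank-$N$ approximant of Lemmas \ref{lemma2} and \ref{lemma04}, and the dyadic equivalence $\Lambda_s(\delta)\approx J_s$ of Lemma \ref{lemma4} --- and this is indeed how the paper begins. But the decisive summation step is missing, and your own computation shows it: the bound $N(\varepsilon)\ll(J_s/\varepsilon)^s$ combined with $a_{n+1}(\mathcal{L})\ll\varepsilon_n n^{1/p'}$ yields $\sum_n a_n^s(\mathcal{L})n^{-s/p'}\ll J_s^s\sum_n n^{-1}$, which diverges. The paper does not repair this by localising $J_s$ over blocks. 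It instead (a) applies the distribution-function identity $\sum_n b_n^s=s\int_0^\infty t^{s-1}\sharp\{n\colon b_n>t\}\,\mathrm{d}t$ directly to the weighted sequence $b_n=a_n(\mathcal{L})n^{-1/p'}$, for which Lemma \ref{lemma2} gives $\sharp\{n\colon b_n>\varepsilon\}\le N(\varepsilon)$, and (b) proves a counting bound with finer structure than a single power of $\varepsilon$, namely $N(\varepsilon)\le\sum_{n\ge1}\sharp\{k\in\mathbb{Z}\colon\sigma_k\gg n^{1/\theta}\varepsilon\}$. The extra decay in $n$ comes from the case in which several consecutive partition points fall into one dyadic block $\Delta_k$: Lemma \ref{lemma12} (resp.\ Lemma \ref{lemma13} when $q<p$) shows that $l$ such intervals, each with $K(I_i)=\varepsilon$, force $l\,\varepsilon^\theta\ll\sigma_k^\theta(\delta)$, i.e.\ $\sigma_k\gg l^{1/\theta}\varepsilon$. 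Integrating $t^{s-1}$ against this counting function produces the factor $\sum_n n^{-s/\theta}=\zeta(s/\theta)$, finite precisely because $s>\theta$; this is where the hypothesis enters, not through an $\ell^\theta\hookrightarrow\ell^s$ embedding. Without this multiplicity-weighted count, the logarithmic divergence you identified cannot be avoided.

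Your proposed repair --- per-block estimates by the localised constants $J_s(l,m)$ together with a superadditivity $\sum_j J_s(l_j,m_j)^s\le J_s^s$ --- is left unexecuted and is problematic as stated: since $\theta<p'$, the range $\theta<s<p'$ is admissible, and there the exponent $s/p'-1$ in $J_s(l,m)$ is negative, so restricting the inner integral to $\Omega(l,m)$ makes $J_s(l,m)$ \emph{larger} than the corresponding piece of $J_s$ and the claimed superadditivity fails in general. The localised quantities the paper actually uses in the case $q<p$ are the dyadic block sums $\Lambda_r(\delta)_{(k(n),k(n+1))}$ controlling $K(I_n)$ from below via \eqref{right}, and the dichotomy $\theta<s\le r$ versus $s>r$ in parts (ii)--(iii) comes from the elementary comparison of $\sum_j\bigl(\sum_{k\in B_j}\sigma_k^r\bigr)^{s/r}$ with $\sum_k\sigma_k^s$ or with $\bigl(\sum_k\sigma_k^r\bigr)^{s/r}$ over disjoint index blocks $B_j$; your sketch does not address how that case split arises. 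Theorem \ref{main} then follows from Theorem \ref{theorem2} by the one-sided inequality $\Lambda_s(\delta)\ll J_s$ of \eqref{left} alone; the reverse inequality \eqref{right} is needed only inside the counting argument, not for the final passage from $\Lambda$ to $J$.
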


Proof of the results follows from \eqref{left} and Section \ref{ub}, which needs

\subsubsection{Technical lemmas}\label{tl}

The first statement is similar to \cite[Lemma 4.4]{LS2000}.

\begin{lemma}\label{lemma4} If $0<s<\infty,$ $1<p<\infty,$ $0<q<\infty$ and $l<m$ then \begin{eqnarray}\label{left}\Lambda_s(\delta)\ll J_s,\end{eqnarray}
\begin{eqnarray}\label{right}\Lambda_s(\delta)_{(l,m)}\gg J_s(l,m).\end{eqnarray}
\end{lemma}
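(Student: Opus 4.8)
The plan is to compare the discrete quantity $\Lambda_s(\delta)$ with the continuous integral $J_s$ by working on dyadic blocks $\Delta_k=[2^{k-1},2^k]$. The key elementary facts are: (a) on $\Delta_k$ one has $t^{-\lambda}\approx 2^{-\lambda k}$, and more precisely the weighted measure $\mathrm{d}[-(z^{-\lambda}-b^{-\lambda})]$ behaves like $2^{-\lambda k}$ times a probability-type mass on $\Delta_k$; (b) for the kernel factor, $\int_{\mathbb{R}^+}[\mathrm{e}^{-xt^\lambda}-\mathrm{e}^{-xb^\lambda}]^\delta\mathrm{d}x \approx t^{-\lambda}$ up to constants depending only on $\delta$ (hence on $q$), whenever $b\ge 2t$ say, and this quantity is monotone in $t$; (c) the inner integral $\int_0^t v^{p'}$ is nondecreasing in $t$. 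I would first record these comparisons as a short preliminary computation, citing \cite[p.\ 1131]{SUsmz} or the scaling substitution $x\mapsto x t^{-\lambda}$ for fact (b).

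For the lower bound \eqref{right}, $\Lambda_s(\delta)_{(l,m)}\gg J_s(l,m)$, I would estimate $J_s(l,m)^s$ from above by splitting the $t$-integral over $\Omega(l,m)=\bigcup_{l\le k\le m-1}\Delta_k$ into the dyadic pieces $\Delta_k$. On $\Delta_k$, the kernel factor $\bigl(\int[\mathrm{e}^{-xt^\lambda}-\mathrm{e}^{-x2^{m\lambda}}]^\delta\bigr)^{s/q}$ is bounded by its value at the left endpoint, i.e.\ $\ll \bigl(\int[\mathrm{e}^{-x2^{k\lambda}}-\mathrm{e}^{-x2^{m\lambda}}]^\delta\bigr)^{s/q}\ll \bigl(\int[\mathrm{e}^{-x2^{k\lambda}}-\mathrm{e}^{-x2^{(k+1)\lambda}}]^\delta\bigr)^{s/q}$ (here using that $m\ge k+1$ and the comparisons above), which is exactly the kernel part of $\sigma_k(\delta)^s$. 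The remaining factor $\int_{\Delta_k}[\int_0^t\chi_{\Omega}v^{p'}]^{s/p'-1}v^{p'}(t)\,\mathrm{d}t$ I would handle by the substitution $u=\int_0^t\chi_{\Omega(l,m)}(y)v^{p'}(y)\mathrm{d}y$: this gives $\frac{p'}{s}\bigl[(\int_0^{2^k}\chi_\Omega v^{p'})^{s/p'}-(\int_0^{2^{k-1}}\chi_\Omega v^{p'})^{s/p'}\bigr]$ when $s\ge p'$, which telescopes against the sum over $k$, and in either case ($s\ge p'$ or $s<p'$) is bounded by $\ll (\int_{\Delta_k}v^{p'})^{s/p'}$ after using subadditivity/superadditivity of $x\mapsto x^{s/p'}$; summing over $k$ from $l$ to $m-1$ reproduces $\Lambda_s(\delta)_{(l,m)}^s$.

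For the upper bound \eqref{left}, $\Lambda_s(\delta)\ll J_s$, I would go in the reverse direction: bound $\sigma_k(\delta)^s$ from above by a piece of the integral defining $J_s$. Write $\sigma_k(\delta)^s = \bigl(\int[\mathrm{e}^{-x2^{k\lambda}}-\mathrm{e}^{-x2^{(k+1)\lambda}}]^\delta\bigr)^{s/q}(\int_{\Delta_k}v^{p'})^{s/p'}$. The kernel factor is $\approx 2^{-\lambda k s/q}\approx t^{-\lambda s/q}$ for $t\in\Delta_k$, and $(\int_{\Delta_k}v^{p'})^{s/p'}$ I would bound by comparing it with $\int_{\Delta_k}[\int_0^t v^{p'}]^{s/p'-1}v^{p'}(t)\,\mathrm{d}t$ up to a constant, again via the substitution $u=\int_0^t v^{p'}$ — now over $\Delta_k$ without the $\chi_\Omega$ truncation, so $\int_0^t v^{p'}\ge \int_{2^{k-1}}^t v^{p'}$ and one gets $(\int_{\Delta_k}v^{p'})^{s/p'}\ll \int_{\Delta_k}[\int_0^t v^{p'}]^{s/p'-1}v^{p'}(t)\,\mathrm{d}t$; when $s<p'$ the exponent $s/p'-1<0$ and the monotone factor $[\int_0^t v^{p'}]^{s/p'-1}$ is largest at $t=2^{k-1}$, giving the same bound by a direct estimate. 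Summing over $k\in\mathbb{Z}$ yields $\Lambda_s(\delta)^s\ll \int_{\mathbb{R}^+}t^{-\lambda s/q}[\int_0^t v^{p'}]^{s/p'-1}v^{p'}(t)\,\mathrm{d}t = J_s^s$.

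The main obstacle, and the point requiring the most care, is the change-of-variables argument in the $v^{p'}$ factor when $s<p'$, where the exponent $s/p'-1$ is negative: one cannot simply drop the truncation $\chi_{\Omega(l,m)}$ inside $\int_0^t$ in the direction needed, and one must instead exploit monotonicity of $[\,\cdot\,]^{s/p'-1}$ together with the dyadic structure to pin the worst factor at an endpoint, at the cost of a constant depending on $s,p$. A secondary technical point is justifying the two-sided comparison (b) for the kernel integral uniformly in the upper cut-off $b\in\{2^{m\lambda},2^{(k+1)\lambda}\}$; this is a one-variable scaling estimate but must be stated cleanly since it is used in both directions. Everything else is bookkeeping with sums of nonnegative terms over dyadic blocks.
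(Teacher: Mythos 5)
Your overall strategy --- dyadic decomposition, the two-sided comparison of the kernel integral with $2^{-k\lambda}$, and the exact substitution $u=\int_0^t v^{p'}$ --- matches the paper's, but the step where you convert the telescoped differences back into single-block quantities $(\int_{\Delta_k}v^{p'})^{s/p'}$ contains a genuine gap. For \eqref{right} you need, with $S_k:=\int_0^{2^k}\chi_{\Omega(l,m)}v^{p'}$ and $a_k:=\int_{\Delta_k}v^{p'}$, the termwise bound $S_k^{s/p'}-S_{k-1}^{s/p'}\ll a_k^{s/p'}$; subadditivity gives this only when $s/p'\le 1$. When $s/p'>1$ (which does occur in the applications, e.g.\ $s=r$ with $p=3,$ $q=2$) the bound is false: for $s/p'=2$ one has $(A+B)^2-A^2=2AB+B^2\gg B^2$ whenever $A\gg B$. ``Superadditivity'' of $x\mapsto x^{s/p'}$ gives the inequality in the wrong direction, and Abel summation against the geometric weights $2^{-k\lambda s/q}$ only returns you to $\sum_k 2^{-k\lambda s/q}S_k^{s/p'}$, which dominates rather than is dominated by $\sum_k 2^{-k\lambda s/q}a_k^{s/p'}$. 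The missing ingredient is the discrete weighted Hardy-type inequality $\sum_{k}2^{-k\lambda s/q}\bigl(\sum_{j\le k}a_j\bigr)^{s/p'}\ll\sum_k 2^{-k\lambda s/q}a_k^{s/p'}$, valid for all exponents $s/p'>0$ precisely because of the geometric decay of the weights; the paper imports it from \cite[Proposition 2.1]{GHS1996}. Without it your argument for \eqref{right} only covers $s\le p'$.

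There is a symmetric, though much more easily repaired, defect in your proof of \eqref{left}: the termwise claim $a_k^{s/p'}\ll T_k^{s/p'}-T_{k-1}^{s/p'}$ with $T_k=\int_0^{2^k}v^{p'}$ holds by superadditivity only when $s/p'\ge1$, and your fallback for $s<p'$ (pinning the factor $[\int_0^tv^{p'}]^{s/p'-1}$ at an endpoint) yields a lower bound of the form $T_k^{s/p'-1}a_k$, which is $\le a_k^{s/p'}$ rather than $\ge$, since $T_k\ge a_k$ and the exponent is negative. Here no Hardy inequality is required: sum first, using $a_k\le T_k$ to get $\Lambda_s^s(\delta)\ll\sum_k2^{-k\lambda s/q}T_k^{s/p'}$, and then note that the exact substitution together with the reindexing $\sum_k2^{-k\lambda s/q}T_{k-1}^{s/p'}=2^{-\lambda s/q}\sum_k2^{-k\lambda s/q}T_k^{s/p'}$ gives $J_s^s\ge\frac{p'}{s}\bigl(1-2^{-\lambda s/q}\bigr)\sum_k2^{-k\lambda s/q}T_k^{s/p'}$ for every $s>0$. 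This global telescoping is exactly what the paper does and removes any case split on the sign of $s/p'-1$.
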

\begin{proof} To prove \eqref{left} we put $\mathbb{A}_s^s:=\sum_{k\in\mathbb{Z}}2^{-k\lambda s/q}\biggl(\int_{0}^{2^{k}}v^{p'}(y)\mathrm{d}y\biggr)^{s/p'}$ and notice that
 \begin{eqnarray*} \Lambda_s^s(\delta)=\sum_{k\in\mathbb{Z}} \biggl(\int_{\mathbb{R}^+}[\mathrm{e}^{-x2^{k\lambda}}-\mathrm{e}^{-x2^{(k+1)\lambda}}]^\delta\mathrm{d}x\biggr)^{s/q}
\biggl(\int_{2^{k-1}}^{2^k}v^{p'}(y)\mathrm{d}y\biggr)^{s/p'}\\
\le \sum_{k\in\mathbb{Z}}\biggl(\int_{\mathbb{R}^+}\mathrm{e}^{-\delta x2^{k\lambda}}\mathrm{d}x\biggr)^{s/q}
\biggl(\int_{0}^{2^k}v^{p'}(y)\mathrm{d}y\biggr)^{s/p'}=\delta^{-s/q}\mathbb{A}_s^s.\end{eqnarray*}
We have
\begin{eqnarray*} J_s^s=\sum_{k\in\mathbb{Z}}\int_{2^{k-1}}^{2^{k}}t^{-\lambda s/q}\biggl(\int_{0}^{t}v^{p'}(y)\mathrm{d}y\biggr)^{s/p'-1}v^{p'}(t)\mathrm{d}t\\
\ge \sum_{k\in\mathbb{Z}}2^{-k\lambda s/q}\int_{2^{k-1}}^{2^{k}}\biggl(\int_{0}^{t}v^{p'}(y)\mathrm{d}y\biggr)^{s/p'-1}v^{p'}(t)\mathrm{d}t\\
=\frac{p'}{s}\sum_{k\in\mathbb{Z}} 2^{-k\lambda s/q} \biggl(\biggl[\int_{0}^{2^{k}}v^{p'}(y)\mathrm{d}y\biggr]^{s/p'} -\biggl[\int_{0}^{2^{k-1}}v^{p'}(y)\mathrm{d}y\biggr]^{s/p'}\biggr)\\
=\frac{p'}{s}\bigl[\mathbb{A}_s^s-2^{-\lambda s/q}\mathbb{A}_s^s\bigr]=
\frac{p'}{s}\bigl[1-2^{-\lambda s/q}\bigr]\mathbb{A}_s^s
\ge \frac{\delta^{s/q}p'}{s}\bigl[1-2^{-\lambda s/q}\bigr]\Lambda_s^s(\delta).\end{eqnarray*}
Hence, $$\Lambda_s(\delta)\le \bigl[\delta^{1/q}\frac{p'}{s}(1-2^{-\lambda s/q})\bigr]^{-1/s}J_s.$$

To prove \eqref{right} note first  that
 \begin{eqnarray}\label{15}\int_{\mathbb{R}^+}[\mathrm{e}^{-x2^{k\lambda}}- \mathrm{e}^{-x2^{(k+1)\lambda}}]^\delta\mathrm{d}x  \ge \int_{2^{-(k+1)\lambda+\lambda_0}}^{2^{-k\lambda}}[\mathrm{e}^{-x2^{k\lambda}}- \mathrm{e}^{-x2^{(k+1)\lambda}}]^\delta\mathrm{d}x\nonumber\\
 \ge \bigl[\mathrm{e}^{-1}-\mathrm{e}^{-2^{\lambda_0}}\bigr]^\delta 2^{-k\lambda}[1-2^{\lambda_0-\lambda}]=:C_12^{-k\lambda}\end{eqnarray} for any $0<\lambda_0<\lambda$. Write
\begin{eqnarray*}
J_s^s(l,m)\le\sum_{k=l}^{m-1}\biggl(\int_{\mathbb{R}^+}[\mathrm{e}^{-x2^{(k-1)\lambda}}- \mathrm{e}^{-x2^{m\lambda}}]^\delta\mathrm{d}x\biggr)^{s/q}\\\times
\int_{2^{k-1}}^{2^k}\biggl(\int_{0}^{t}\chi_{\Omega(l,m)}(y) v^{p'}(y)\mathrm{d}y\biggr)^{s/p'-1}v^{p'}(t)\mathrm{d}t\\
\le\frac{p'}{s}\frac{2^{s\lambda/q}}{\delta^{s/q}}\sum_{k=l}^{m-1} 2^{-sk\lambda/q}\biggl(\int_{0}^{2^k}\chi_{\Omega(l,m)}(y)v^{p'}(y)\mathrm{d}y\biggr)^{s/p'}.\end{eqnarray*}
By \cite[Proposition 2.1]{GHS1996} \begin{eqnarray*}
\sum_{k=l}^{m-1} 2^{-sk\lambda/q}\biggl(\int_{0}^{2^k}\chi_{\Omega(l,m)}(y)v^{p'}(y)\mathrm{d}y\biggr)^{s/p'}\\
=\sum_{k=l}^{m-1} 2^{-sk\lambda/q}\biggl(\sum_{l\le j\le k}\int_{2^{j-1}}^{2^j}v^{p'}(y)\mathrm{d}y\biggr)^{s/p'}\\
\ll \sum_{k=l}^{m-1} 2^{-sk\lambda/q}\biggl(\int_{2^{k-1}}^{2^k}v^{p'}(y)\mathrm{d}y\biggr)^{s/p'}
.\end{eqnarray*} Thus, we obtain \eqref{right} with help of \eqref{15}.
\end{proof}

\begin{corollary}\label{col} We have $\Lambda_s(\delta)\approx J_s.$
\end{corollary}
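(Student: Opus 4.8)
The plan is to obtain both halves of the equivalence directly from Lemma~\ref{lemma4}. The inequality $\Lambda_s(\delta)\ll J_s$ is nothing but \eqref{left}, so the only thing left to establish is the reverse bound $J_s\ll\Lambda_s(\delta)$. First I would note that $\Lambda_s^s(\delta)_{(l,m)}=\sum_{k=l}^{m-1}\sigma_k^s(\delta)$ is a partial sum of the series $\sum_{k\in\mathbb{Z}}\sigma_k^s(\delta)$ with non-negative terms, hence $\Lambda_s(\delta)_{(l,m)}\le\Lambda_s(\delta)$; combining this with \eqref{right} gives
\[J_s(l,m)\ll\Lambda_s(\delta)\qquad\text{for all integers }l<m,\]
with an implied constant depending only on $\lambda,p,q,s$ and independent of $l,m$. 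It then remains to let $\Omega(l,m)$ exhaust $\mathbb{R}^+$ and check that the left-hand side recovers $J_s$ up to a constant.

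Concretely, I would put $l=-n$, $m=n$ and let $n\to\infty$, so that $\Omega(-n,n)=[2^{-n-1},2^{n-1}]\uparrow\mathbb{R}^+$. For each fixed $t>0$ one has $\chi_{\Omega(-n,n)}(t)\to1$, $\int_0^t\chi_{\Omega(-n,n)}(y)v^{p'}(y)\mathrm{d}y\to\int_0^tv^{p'}(y)\mathrm{d}y$ by monotone convergence, and, since $\mathrm{e}^{-x2^{n\lambda}}\downarrow0$ for $x>0$,
\[\int_{\mathbb{R}^+}\bigl[\mathrm{e}^{-xt^\lambda}-\mathrm{e}^{-x2^{n\lambda}}\bigr]^\delta\mathrm{d}x\ \longrightarrow\ \int_{\mathbb{R}^+}\mathrm{e}^{-\delta xt^\lambda}\mathrm{d}x=\frac{1}{\delta\,t^\lambda}\]
again by monotone convergence. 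Hence the integrand defining $J_s^s(-n,n)$ converges pointwise to $\delta^{-s/q}\,t^{-\lambda s/q}\bigl[\int_0^tv^{p'}\bigr]^{s/p'-1}v^{p'}(t)$, which is exactly $\delta^{-s/q}$ times the integrand of $J_s^s$. Fatou's lemma then gives $\delta^{-s/q}J_s^s\le\liminf_{n\to\infty}J_s^s(-n,n)$, and together with the uniform bound above this yields $J_s^s\ll\Lambda_s^s(\delta)$ (the constant $\delta^{s/q}$ with $\delta\in\{1,q\}$ being absorbed into the implied constant). Combining with \eqref{left} gives $\Lambda_s(\delta)\approx J_s$; and if $J_s=\infty$ the same chain forces $\Lambda_s(\delta)=\infty$, so the equivalence is unconditional.

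I do not expect a genuine obstacle here. The one point worth a little care is that $J_s(l,m)$ is \emph{not} monotone in the pair $(l,m)$: when $s<p'$ the exponent $s/p'-1$ is negative, so enlarging $\Omega(l,m)$ decreases the factor $\bigl[\int_0^t\chi_{\Omega(l,m)}v^{p'}\bigr]^{s/p'-1}$ while increasing the other two factors. This is precisely why I would route the limit through Fatou's lemma applied to the pointwise limit of the integrands rather than through a direct monotone passage; the rest is routine.
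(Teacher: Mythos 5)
Your proof is correct and follows the route the paper intends: the corollary is stated without proof immediately after Lemma~\ref{lemma4}, the direction $\Lambda_s(\delta)\ll J_s$ being \eqref{left} and the reverse following from \eqref{right} by letting $\Omega(l,m)$ exhaust $\mathbb{R}^+$. Your use of Fatou's lemma to handle the non-monotonicity of the factor $\bigl[\int_0^t\chi_{\Omega(l,m)}v^{p'}\bigr]^{s/p'-1}$ when $s<p'$ is a legitimate and careful way to fill in that unwritten limiting step.
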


The next two lemmas are also analogous to some statements from \cite{LS2000}. We will apply them when $1<p\le q<\infty.$ Therefore, we actually need $\delta=1$ and $\sigma_k(1)=\bigl[1-2^{-\lambda}\bigr]^{1/q}2^{-k\lambda/q} \biggl(\int_{\Delta_k}v^{p'}(y)\mathrm{d}y\biggr)^{1/p'}.$

\begin{lemma}\label{lemma11}
Let $k_1,k_2,k_3\in\mathbb{Z},$ $k_1<k_3,$ $k_1\le k_2\le k_3$ and $z_1\in\Delta_{k_1},$ $z_0\in\Delta_{k_2},$ $z_2\in\Delta_{k_3}.$ Then $$\bigl[z_0^{-\lambda}-z_2^{-\lambda}\bigr]^{1/q}
\biggl(\int_{z_1}^{z_0}v^{p'}(y)\mathrm{d}y\biggr)^{1/p'}\ll \max_{k_1\le k\le k_2}\sigma_k(1).$$
\end{lemma}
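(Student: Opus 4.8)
The strategy is to bound the product $[z_0^{-\lambda}-z_2^{-\lambda}]^{1/q}(\int_{z_1}^{z_0}v^{p'})^{1/p'}$ by first discarding the subtrahend $z_2^{-\lambda}$ (losing only a constant, since $z_2\in\Delta_{k_3}$ with $k_3>k_1\ge k_2$ makes $z_2^{-\lambda}\le z_0^{-\lambda}$ and one can even afford to keep $z_0^{-\lambda}$ up to the factor $1-2^{-(k_3-k_2)\lambda}\ge 1-2^{-\lambda}>0$), so that it suffices to prove
\[
z_0^{-\lambda/q}\biggl(\int_{z_1}^{z_0}v^{p'}(y)\,\mathrm{d}y\biggr)^{1/p'}\ll \max_{k_1\le k\le k_2}\sigma_k(1).
\]
Since $z_0\in\Delta_{k_2}$ we have $z_0^{-\lambda/q}\approx 2^{-k_2\lambda/q}$, and since $z_1\in\Delta_{k_1}$ and $z_0\in\Delta_{k_2}$ the interval $(z_1,z_0)$ is contained in $\Omega(k_1,k_2+1)=\bigcup_{k_1\le k\le k_2}\Delta_k$. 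Hence $\int_{z_1}^{z_0}v^{p'}\le\sum_{k=k_1}^{k_2}\int_{\Delta_k}v^{p'}(y)\,\mathrm{d}y$, and the left-hand side is dominated by
\[
2^{-k_2\lambda/q}\biggl(\sum_{k=k_1}^{k_2}\int_{\Delta_k}v^{p'}(y)\,\mathrm{d}y\biggr)^{1/p'}.
\]

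To finish I would pass through the normalisation $\sigma_k(1)=[1-2^{-\lambda}]^{1/q}2^{-k\lambda/q}(\int_{\Delta_k}v^{p'})^{1/p'}$, i.e. $(\int_{\Delta_k}v^{p'})^{1/p'}\approx 2^{k\lambda/q}\sigma_k(1)$, and set $M:=\max_{k_1\le k\le k_2}\sigma_k(1)$. Then
\[
\sum_{k=k_1}^{k_2}\int_{\Delta_k}v^{p'}(y)\,\mathrm{d}y\approx\sum_{k=k_1}^{k_2}2^{k\lambda p'/q}\sigma_k^{p'}(1)\le M^{p'}\sum_{k=k_1}^{k_2}2^{k\lambda p'/q}\ll M^{p'}\,2^{k_2\lambda p'/q},
\]
the last step being the geometric-series estimate $\sum_{k\le k_2}2^{k\lambda p'/q}=\dfrac{2^{(k_2+1)\lambda p'/q}}{2^{\lambda p'/q}-1}\ll 2^{k_2\lambda p'/q}$, valid because $\lambda p'/q>0$. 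Raising to the power $1/p'$ and multiplying by $2^{-k_2\lambda/q}$ gives $2^{-k_2\lambda/q}(\sum_{k=k_1}^{k_2}\int_{\Delta_k}v^{p'})^{1/p'}\ll M$, which is the claim; the implied constant depends only on $\lambda,p,q$ as required.

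The only genuine point to be careful about is the very first reduction: one needs $[z_0^{-\lambda}-z_2^{-\lambda}]^{1/q}\approx z_0^{-\lambda/q}$, for which the strict inequality $k_1<k_3$ together with $k_2\le k_3$ matters — if $k_2=k_3$ one still has $z_0,z_2\in\Delta_{k_3}$ so $z_2\le 2z_0$ and hence $z_2^{-\lambda}\ge 2^{-\lambda}z_0^{-\lambda}$ would go the wrong way, so one should instead use $k_2\le k_3$ combined with, say, $z_2\ge 2^{k_3-1}\ge 2^{k_2-1}$ and $z_0\le 2^{k_2}$ to get $z_2^{-\lambda}\le 2^{\lambda}z_0^{-\lambda}$, which is not yet enough; the clean way is to note that in all applications $z_0$ and $z_2$ will be separated, but here one simply observes $z_0^{-\lambda}-z_2^{-\lambda}\le z_0^{-\lambda}$ for the upper bound (which is all the lemma needs, being a $\ll$ statement), so no lower bound on the bracket is required at all. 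Thus the first reduction is immediate and the proof reduces entirely to the geometric-series bookkeeping above.
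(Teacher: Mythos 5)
Your proof is correct and follows essentially the same route as the paper's: drop $z_2^{-\lambda}$ via $z_0^{-\lambda}-z_2^{-\lambda}\le z_0^{-\lambda}\le 2^{-(k_2-1)\lambda}$, enlarge $\int_{z_1}^{z_0}v^{p'}$ to $\sum_{k=k_1}^{k_2}\int_{\Delta_k}v^{p'}$, rewrite each block as $2^{k\lambda p'/q}\sigma_k^{p'}(1)$ up to the constant $[1-2^{-\lambda}]^{-p'/q}$, and close with the geometric series. The closing digression about whether $[z_0^{-\lambda}-z_2^{-\lambda}]^{1/q}\approx z_0^{-\lambda/q}$ is unnecessary, as you yourself conclude, since only the trivial one-sided bound is needed.
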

\begin{proof}
\begin{eqnarray*}
\bigl[z_0^{-\lambda}-z_2^{-\lambda}\bigr]^{1/q}
\biggl(\int_{z_1}^{z_0}v^{p'}(y)\mathrm{d}y\biggr)^{1/p'}\le
2^{-(k_2-1)\lambda/q}\biggl(\int_{2^{k_1-1}}^{2^{k_2}}v^{p'}(y)\mathrm{d}y\biggr)^{1/p'}\\
\le \bigl[1-2^{-\lambda}\bigr]^{-1/q}2^{-(k_2-1)\lambda/q} \bigl[2^{p'k_1\lambda/q}\sigma_{k_1}^{p'}(1)+\ldots +
2^{p'k_2\lambda/q}\sigma_{k_2}^{p'}(1)\bigr]^{1/p'}\\
\le\frac{2^{\lambda/q}\bigl[1-2^{-\lambda}\bigr]^{-1/q}}{(1- 2^{-p'\lambda/q})^{1/p'}}\max_{k_1\le k\le k_2}\sigma_k(1) .\end{eqnarray*}
\end{proof}

\begin{lemma}\label{lemma12}
Let $\theta=p'q/(p'+q),$ $I_n=(c_n,c_{n+1}),$ $z_n\in I_n$ and $2^{k-1}<c_1<\ldots<c_{l+1}<2^k.$ Then $$\sum_{n=1}^l \bigl[z_n^{-\lambda}-c_{n+1}^{-\lambda}\bigr]^{\theta/q}\biggl(\int_{c_n}^{z_n}v^{p'}(y)\mathrm{d}y\biggr)^{\theta/p'}
\ll \sigma_k^\theta(1).$$
\end{lemma}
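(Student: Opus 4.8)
The plan is to bound the sum over the subinterval partition of $\Delta_k$ by a single $\sigma_k(1)^\theta$ by combining the monotonicity of $t\mapsto t^{-\lambda}$ (which lets us replace the $c$'s and $z$'s by dyadic endpoints) with the elementary superadditivity inequality $\sum_j a_j^{\theta/p'} \gg \bigl(\sum_j a_j\bigr)^{\theta/p'}$ valid because $\theta/p' = q/(p'+q) < 1$, applied to the "masses" $a_n := \int_{c_n}^{z_n} v^{p'}$, and dually the fact that $\theta/q = p'/(p'+q) < 1$ so that $\sum_n b_n^{\theta/q} \gg \bigl(\sum_n b_n\bigr)^{\theta/q}$ for the factors $b_n := z_n^{-\lambda} - c_{n+1}^{-\lambda}$. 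The slight mismatch is that the two exponents $\theta/p'$ and $\theta/q$ attach to \emph{different} index-dependent quantities, so a direct concavity argument does not apply termwise; this is the main obstacle, and it is exactly why the hypothesis is $\theta = p'q/(p'+q)$ rather than either $p'$ or $q$ alone.

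First I would record that, since all $c_n, z_n$ lie in $[2^{k-1}, 2^k]$, we have $z_n^{-\lambda} - c_{n+1}^{-\lambda} \le 2^{-(k-1)\lambda} = 2^\lambda \cdot 2^{-k\lambda}$, so each factor $\bigl[z_n^{-\lambda}-c_{n+1}^{-\lambda}\bigr]^{\theta/q} \ll 2^{-k\lambda\theta/q}$, uniformly in $n$. This reduces the claim to
$$
\sum_{n=1}^l \bigl[z_n^{-\lambda}-c_{n+1}^{-\lambda}\bigr]^{\theta/q}\biggl(\int_{c_n}^{z_n}v^{p'}\biggr)^{\theta/p'}
\ll 2^{-k\lambda\theta/q}\biggl(\int_{\Delta_k}v^{p'}\biggr)^{\theta/p'},
$$
the right side being $\asymp \sigma_k^\theta(1)$ up to the constant $[1-2^{-\lambda}]^{\theta/q}$. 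To get there one cannot simply drop the $b_n$ factors, because their sum telescopes: $\sum_n (z_n^{-\lambda} - c_{n+1}^{-\lambda}) \le \sum_n (c_n^{-\lambda} - c_{n+1}^{-\lambda}) = c_1^{-\lambda} - c_{l+1}^{-\lambda} \le 2^{-(k-1)\lambda}$. So I would instead apply Hölder's inequality with exponents $(p'+q)/q$ and $(p'+q)/p'$ to split the sum:
$$
\sum_{n=1}^l b_n^{\theta/q} a_n^{\theta/p'}
\le \biggl(\sum_{n=1}^l b_n^{\theta/q \cdot (p'+q)/q}\biggr)^{q/(p'+q)}
\biggl(\sum_{n=1}^l a_n^{\theta/p' \cdot (p'+q)/p'}\biggr)^{p'/(p'+q)}.
$$

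Now the exponents work out cleanly: $\tfrac{\theta}{q}\cdot\tfrac{p'+q}{q} = \tfrac{p'}{q}$ which need not be $\le 1$, so here I would use instead the crude bound $b_n \ll 2^{-k\lambda}$ on each term before summing, giving $\sum_n b_n^{p'/q} \ll l\, 2^{-k\lambda p'/q}$ — but the factor $l$ is fatal. The correct route is to keep the Hölder exponents so that \emph{both} resulting sums have exponents $\le 1$: choosing the conjugate pair so that the first bracket carries exponent exactly $1$ on $b_n$ and the second exactly $1$ on $a_n$, i.e. write $b_n^{\theta/q}a_n^{\theta/p'} = \bigl(b_n^{\theta/q \cdot 1/\mu}\bigr)^\mu\bigl(a_n^{\theta/p'\cdot 1/\nu}\bigr)^\nu$ with $1/\mu+1/\nu=1$ and $\theta/(q\mu)=\theta/(p'\nu)=1$; this forces $\mu = \theta/q = p'/(p'+q)<1$, which is not a legal Hölder exponent. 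The resolution actually used in such arguments (cf.\ \cite{LS2000}) is the \emph{reverse} route: since $\theta/p' + \theta/q = \theta(p'+q)/(p'q) = 1$, the two exponents sum to $1$, so by the superadditivity of $(x,y)\mapsto x^{\theta/q}y^{\theta/p'}$ on products of sums one has
$$
\biggl(\sum_n b_n\biggr)^{\theta/q}\biggl(\sum_n a_n\biggr)^{\theta/p'} \gg \sum_n b_n^{\theta/q}a_n^{\theta/p'}
$$
— this is just Hölder's inequality in the form $\sum_n b_n^{\theta/q}a_n^{\theta/p'} \le (\sum b_n)^{\theta/q}(\sum a_n)^{\theta/p'}$, valid precisely because $\theta/q+\theta/p'=1$. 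Then $\sum_n b_n \le 2^{-(k-1)\lambda}$ and $\sum_n a_n \le \int_{\Delta_k} v^{p'}$ finish it, with constant $2^{\lambda\theta/q}[1-2^{-\lambda}]^{-\theta/q}$ or so. The one genuine subtlety to watch is that the $z_n$'s need not be monotone interlaced with the $c_n$'s in a way that makes $\sum a_n \le \int_{\Delta_k} v^{p'}$ immediate; but since $z_n \in I_n = (c_n,c_{n+1})$ the intervals $(c_n, z_n)$ are pairwise disjoint and contained in $(2^{k-1},2^k)$, so disjointness gives exactly $\sum_n \int_{c_n}^{z_n} v^{p'} \le \int_{\Delta_k} v^{p'}$, and similarly the $b_n$'s telescope along the ordered $c_n$'s. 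I expect the whole argument to be two or three lines once the identity $\theta/p'+\theta/q=1$ is invoked; the main obstacle is recognizing that this is the right identity and that plain Hölder — not concavity — is the tool.
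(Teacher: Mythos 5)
Your final argument is exactly the paper's proof: after enlarging $z_n^{-\lambda}-c_{n+1}^{-\lambda}$ to $c_n^{-\lambda}-c_{n+1}^{-\lambda}$ and $\int_{c_n}^{z_n}v^{p'}$ to $\int_{c_n}^{c_{n+1}}v^{p'}$, one applies H\"{o}lder's inequality with conjugate exponents $(p'+q)/p'$ and $(p'+q)/q$ (valid since $\theta/q+\theta/p'=1$), then telescopes the first sum and uses disjointness for the second to reach $\sigma_k^\theta(1)$. The exploratory false starts aside, your resolution coincides with the paper's two-line computation, so the proposal is correct and takes essentially the same approach.
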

\begin{proof} By H\"{o}lder's inequality with $(p'+q)/q$ and $(p'+q)/p'$ we obtain
\begin{eqnarray*}
\sum_{n=1}^l \bigl[z_n^{-\lambda}-c_{n+1}^{-\lambda}\bigr]^{\theta/q}\biggl(\int_{c_n}^{z_n}v^{p'}(y)\mathrm{d}y\biggr)^{\theta/p'}\\
\le \sum_{n=1}^l \bigl[c_n^{-\lambda}-c_{n+1}^{-\lambda}\bigr] ^{p'/(p'+q)}\biggl(\int_{c_n}^{c_{n+1}}v^{p'}(y)\mathrm{d}y\biggr)^{q/(p'+q)}\\
\le \biggl(\sum_{n=1}^l \bigl[c_n^{-\lambda}-c_{n+1}^{-\lambda}\bigr] \biggr)^{\theta/q}\biggl(\sum_{n=1}^l \int_{c_n}^{c_{n+1}}v^{p'}(y)\mathrm{d}y\biggr)^{\theta/p'}
\ll \sigma_k^\theta(1).\end{eqnarray*}
\end{proof}

The next statement is an analog of the previous Lemma but working in the case $0<q<p<\infty.$
\begin{lemma}\label{lemma13}
Let $\theta=p'q/(p'+q),$ $I_n=(c_n,c_{n+1})$ and $2^{k-1}<c_1<\ldots<c_{l+1}<2^k.$ Then \begin{eqnarray*}\sum_{n=1}^l \biggl(\int_{c_n}^{c_{n+1}}
\biggl[\int_{c_n}^{t}v^{p'}(y)\mathrm{d}y\biggr]^{r/p'} \mathrm{d}\biggl[-
\biggl(\int_{\mathbb{R}^+}\bigl[\mathrm{e}^{-xt^\lambda}- \mathrm{e}^{-xc_{n+1}^\lambda}\bigr]^\delta\mathrm{d}x\biggr)^{r/q}\biggr]\biggr)^{\theta/r}\\
\ll [2^\lambda-1]^{\theta/q}\sigma_k^\theta(\delta).\end{eqnarray*}
\end{lemma}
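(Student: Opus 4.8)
The plan is to mimic the argument of Lemma \ref{lemma12}, but now the "elementary" quantity attached to each subinterval $I_n=(c_n,c_{n+1})$ is no longer a simple product of a power of $c_n^{-\lambda}-c_{n+1}^{-\lambda}$ with a power of $\int_{c_n}^{c_{n+1}}v^{p'}$: it is the full $B_0(\delta)^r_{I_n}$-type integral. So the first step is to estimate that inner integral from above by such a product. Since all the $c_n$ lie inside one dyadic block $\Delta_k=[2^{k-1},2^k]$, on the range $t\in(c_n,c_{n+1})$ we have $t^{-\lambda}\le 2^{-(k-1)\lambda}$ and, by \eqref{15} (or a direct computation of $\int_{\mathbb R^+}[\mathrm e^{-xt^\lambda}-\mathrm e^{-xc_{n+1}^\lambda}]^\delta\mathrm dx$), this integral is $\approx t^{-\lambda}$ up to constants depending on $\lambda,\delta$; hence the measure $\mathrm d[-(\int_{\mathbb R^+}[\cdots]^\delta\mathrm dx)^{r/q}]$ is comparable, on $\Delta_k$, to $\mathrm d[-(t^{-\lambda})^{r/q}]$, and integrating by parts (the integrand $[\int_{c_n}^t v^{p'}]^{r/p'}$ is increasing, the outer factor decreasing) bounds
$$\int_{c_n}^{c_{n+1}}\Bigl[\int_{c_n}^{t}v^{p'}\Bigr]^{r/p'}\mathrm d\Bigl[-\Bigl(\int_{\mathbb R^+}[\mathrm e^{-xt^\lambda}-\mathrm e^{-xc_{n+1}^\lambda}]^\delta\mathrm dx\Bigr)^{r/q}\Bigr]\ll [2^\lambda-1]\,2^{-(k-1)\lambda r/q}\Bigl(\int_{c_n}^{c_{n+1}}v^{p'}\Bigr)^{r/p'},$$
where the $[2^\lambda-1]$ factor records that $c_n^{-\lambda}-c_{n+1}^{-\lambda}\le c_n^{-\lambda}(1-2^{-\lambda})\approx 2^{-(k-1)\lambda}(2^\lambda-1)/2^\lambda$ within the block. (One keeps $[2^\lambda-1]^{\theta/q}$ explicit, as the statement demands; everything else goes into $\ll$.)

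The second step is to raise this to the power $\theta/r$ and sum over $n=1,\dots,l$. We get
$$\sum_{n=1}^l(\cdots)^{\theta/r}\ll [2^\lambda-1]^{\theta/q}\,2^{-(k-1)\lambda\theta/q}\sum_{n=1}^l\Bigl(\int_{c_n}^{c_{n+1}}v^{p'}\Bigr)^{\theta/p'}.$$
Now I use $\theta/p'\le 1$ (indeed $\theta=p'q/(p'+q)<p'$, since $q<\infty$), so $\ell^{\theta/p'}$-summability of the non-negative terms $\int_{c_n}^{c_{n+1}}v^{p'}$ gives the superadditivity bound $\sum_{n}(\int_{c_n}^{c_{n+1}}v^{p'})^{\theta/p'}\le(\sum_n\int_{c_n}^{c_{n+1}}v^{p'})^{\theta/p'}=(\int_{\Delta_k}\chi_{\cup I_n}v^{p'})^{\theta/p'}\le(\int_{\Delta_k}v^{p'})^{\theta/p'}$, because the $I_n$ are disjoint subintervals of $\Delta_k$. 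Substituting $2^{-(k-1)\lambda\theta/q}(\int_{\Delta_k}v^{p'})^{\theta/p'}\approx\sigma_k^\theta(\delta)$ (using $\sigma_k(\delta)\approx 2^{-k\lambda/q}(\int_{\Delta_k}v^{p'})^{1/p'}$ by \eqref{15}, the comparison of $\int_{\mathbb R^+}[\cdots]^\delta\mathrm dx$ with $2^{-k\lambda}$) yields the claimed $\ll[2^\lambda-1]^{\theta/q}\sigma_k^\theta(\delta)$.

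The main obstacle I anticipate is the first step — justifying cleanly that, uniformly for $t$ in a dyadic block, $\int_{\mathbb R^+}[\mathrm e^{-xt^\lambda}-\mathrm e^{-xc_{n+1}^\lambda}]^\delta\mathrm dx$ is two-sided comparable to $t^{-\lambda}$ with constants independent of $n$ and $k$ (the lower bound is \eqref{15}; the upper bound is $\le\int_{\mathbb R^+}\mathrm e^{-\delta xt^\lambda}\mathrm dx=\delta^{-1}t^{-\lambda}$), and then handling the resulting Stieltjes measure so that the integration-by-parts estimate is rigorous even when $\int_{c_n}^t v^{p'}$ is merely monotone and the outer factor is merely of bounded variation. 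Once that comparison is in hand, steps two and three are routine: a power-mean / superadditivity inequality with exponent $\theta/p'\le1$ and the disjointness of the $I_n$ inside $\Delta_k$, exactly as in Lemma \ref{lemma12}.
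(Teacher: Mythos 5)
Your first step is essentially right in spirit (and matches the paper's own opening move): pull the increasing factor $[\int_{c_n}^{t}v^{p'}]^{r/p'}$ out of the Stieltjes integral at its maximal value $[\int_{c_n}^{c_{n+1}}v^{p'}]^{r/p'}$ and note that the remaining total variation is exactly $\bigl(\int_{\mathbb{R}^+}[\mathrm{e}^{-xc_n^\lambda}-\mathrm{e}^{-xc_{n+1}^\lambda}]^\delta\mathrm{d}x\bigr)^{r/q}$. You do not need the two-sided comparison with $t^{-\lambda}$ that you flag as the main obstacle --- and indeed that comparison is false near $t=c_{n+1}$, where the integral vanishes; only the upper bound $\le\delta^{-1}t^{-\lambda}$ and the exact value of the total mass are relevant.

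The genuine gap is in your second step. Having replaced the exponential factor by the uniform bound $2^{-(k-1)\lambda r/q}$, you are left with $\sum_{n=1}^{l}\bigl(\int_{c_n}^{c_{n+1}}v^{p'}\bigr)^{\theta/p'}$ and you invoke ``$\theta/p'\le1$'' to dominate this by $\bigl(\sum_n\int_{c_n}^{c_{n+1}}v^{p'}\bigr)^{\theta/p'}$. That inequality goes the wrong way: for an exponent $\alpha\le1$ one has $\sum_n a_n^{\alpha}\ge(\sum_n a_n)^{\alpha}$ (with $l$ equal terms the ratio is $l^{1-\alpha}$), and here $\theta/p'=q/(p'+q)<1$ always. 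So your estimate degrades by a factor $l^{1-\theta/p'}$, while the lemma must hold uniformly in $l$; the proof as written fails. The paper avoids this by not discarding the exponential factor: after the first step each summand is $\bigl(\int_{I_n}v^{p'}\bigr)^{\theta/p'}\bigl(\int_{\mathbb{R}^+}[\mathrm{e}^{-xc_n^\lambda}-\mathrm{e}^{-xc_{n+1}^\lambda}]^\delta\mathrm{d}x\bigr)^{\theta/q}$, and since $\theta/p'+\theta/q=1$, H\"older's inequality with the conjugate pair $p'/\theta,\,q/\theta$ (exactly as in Lemma \ref{lemma12}) turns the sum into $\bigl(\sum_n\int_{I_n}v^{p'}\bigr)^{\theta/p'}\bigl(\sum_n\int_{\mathbb{R}^+}[\mathrm{e}^{-xc_n^\lambda}-\mathrm{e}^{-xc_{n+1}^\lambda}]^\delta\mathrm{d}x\bigr)^{\theta/q}$. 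Now both inner sums carry exponent $1$: the first is $\le\int_{\Delta_k}v^{p'}$ by disjointness, and the second is controlled by $2^{-k\lambda}$ (for $\delta=1$ it telescopes to $c_1^{-\lambda}-c_{l+1}^{-\lambda}\le 2^{-k\lambda}[2^\lambda-1]$, which is where the constant in the statement originates; the case $\delta=q<1$ requires a separate estimate of that sum). You should restructure your second step along these lines rather than bounding the exponential factor uniformly before summing.
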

\begin{proof} We have by H\"{o}lder's inequality
\begin{eqnarray*}
\sum_{n=1}^l \biggl(\int_{c_n}^{c_{n+1}}
\biggl[\int_{c_n}^{t}v^{p'}(y)\mathrm{d}y\biggr]^{r/p'} \mathrm{d}\biggl[-
\biggl(\int_{\mathbb{R}^+}\bigl[\mathrm{e}^{-xt^\lambda}- \mathrm{e}^{-xc_{n+1}^\lambda}\bigr]^\delta\mathrm{d}x\biggr)^{r/q}\biggr]\biggr)^{\theta/r}\\
\le \sum_{n=1}^l \biggl(\int_{c_n}^{c_{n+1}}v^{p'}(y)\mathrm{d}y\biggr)^{\theta/p'}
\biggl(\int_{c_n}^{c_{n+1}}\mathrm{d}\biggl[-
\biggl(\int_{\mathbb{R}^+}\bigl[\mathrm{e}^{-xt^\lambda}- \mathrm{e}^{-xc_{n+1}^\lambda}\bigr]^\delta\mathrm{d}x\biggr)^{r/q}\biggr]\biggr)^{\theta/r}\\
=\sum_{n=1}^l \biggl(\int_{c_n}^{c_{n+1}}v^{p'}(y)\mathrm{d}y\biggr)^{\theta/p'}
\biggl(\int_{\mathbb{R}^+}\bigl[\mathrm{e}^{-xc_n^\lambda}-\mathrm{e}^{-xc_{n+1}^\lambda}\bigr]^\delta\mathrm{d}x\biggr) ^{\theta/q}\ll\sigma_k^\theta(\delta).\end{eqnarray*}
\end{proof}

\subsubsection{Upper estimates for Schatten type norms}\label{ub}

\begin{theorem}\label{theorem2}
Let $1<p,q<\infty,$ $\theta=p'q/(p'+q)$ and $r=pq/(p-q).$ Assume $\mathcal{L}\colon L^p\to L^q$ is compact and $s>\theta.$\\
{\rm\textbf{(i)}} If either $1<p\le q<\infty,$ $s>\theta$ or $1<q<p<\infty,$ $\theta<s\le r$ then
\begin{eqnarray*}\biggl(\sum_{n\in\mathbb{N}} \bigl[a_n(\mathcal{L})n^{-1/p'}\bigr]^s\biggr)^{1/s}\le C(p,q,s,\lambda)\Lambda_s(1).\end{eqnarray*}
{\rm\textbf{(ii)}} If $1<q<p<\infty$ and $s>r$ then
\begin{eqnarray*}\biggl(\sum_{n\in\mathbb{N}} \bigl[a_n(\mathcal{L})n^{-1/p'}\bigr]^s\biggr)^{1/s}\le C(p,q,s,\lambda)\Lambda_r(1).\end{eqnarray*}
{\rm\textbf{(iii)}} If $0<q<1<p<\infty$ then
\begin{eqnarray*}\biggl(\sum_{n\in\mathbb{N}} \bigl[a_n(\mathcal{L})n^{1/p-1/q}\bigr]^s\biggr)^{1/s}\le C(p,q,s,\lambda)\begin{cases}\Lambda_s(q), &\theta<s\le r\\ \Lambda_r(q), &s>r\end{cases}.\end{eqnarray*}\end{theorem}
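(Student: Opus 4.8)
The plan is to convert the Schatten-norm estimate into a statement about the sequence $\{a_n(\mathcal{L})\}$ controlled by the quantities $K(I)$, and then resum. The engine is Lemmas \ref{lemma2} and \ref{lemma04}: if one can place $N(\varepsilon)$ interior breakpoints on which $K(I_n)=\varepsilon$, then $a_{N+1}(\mathcal{L})$ is bounded by $\varepsilon N^{1/p'}$ (case $p>1$, $q\ge 1$) or by $\varepsilon N^{1/p-1/q}$ (case $q<1\le p$). So the whole problem reduces to counting: given a fixed ``height'' $\varepsilon$, estimate $N(\varepsilon)$ — the number of disjoint intervals on which the local norm $K(I)$ equals $\varepsilon$ — in terms of the discretised quantity $\Lambda_s(\delta)$. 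This is exactly the classical Birman--Solomyak / Evans--Harris--Lang counting scheme adapted from \cite{LS2000}.

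First I would discretise. On each dyadic block $\Delta_k$, Lemmas \ref{lemma1}, \ref{lemma03} express $K(I)$ for $I\subseteq\Delta_k$ in terms of the local $A_0$-, $B_0$-, or $B_3$-type functionals; Lemmas \ref{lemma11}, \ref{lemma12}, \ref{lemma13} then show that summing the $\theta$-th powers of these local norms over a chain of disjoint subintervals of a single $\Delta_k$ is dominated by $\sigma_k^\theta(\delta)$ (up to constants). Consequently, if $n_k=n_k(\varepsilon)$ denotes the number of the chosen breakpoint intervals lying inside $\Delta_k$, then $n_k\varepsilon^\theta\ll\sigma_k^\theta(\delta)$, i.e. $n_k\ll(\sigma_k(\delta)/\varepsilon)^\theta$. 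Summing over $k$ and using that $N(\varepsilon)=\sum_k n_k$ plus a bounded correction for intervals straddling dyadic endpoints, I get $N(\varepsilon)\ll\varepsilon^{-\theta}\sum_k\sigma_k^\theta(\delta)=\varepsilon^{-\theta}\Lambda_\theta^\theta(\delta)$ whenever $\Lambda_\theta(\delta)<\infty$; more to the point, for every $\varepsilon$ one has the ``distribution-function'' bound $N(\varepsilon)\ll\varepsilon^{-\theta}\Lambda_\theta^\theta(\delta)$, and, sharper, a weighted version that feeds the $\ell^s$ sum.

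Next I would run the standard passage from the counting bound to the weighted $\ell^s$ norm. Choose the geometric sequence of heights $\varepsilon_j=2^{-j}\|\mathcal{L}\|_{p\to q}$; let $N_j=N(\varepsilon_j)$. Lemma \ref{lemma2} (resp. \ref{lemma04}) gives $a_{N_j+1}(\mathcal{L})\ll\varepsilon_j N_j^{1/p'}$ (resp. $\varepsilon_j N_j^{1/p-1/q}$), so for $N_j<n\le N_{j+1}$ we control $a_n(\mathcal{L})n^{-1/p'}$ (resp. $a_n(\mathcal{L})n^{1/p-1/q}$) by $\varepsilon_j$ up to constants, because the exponent on $N_j$ exactly cancels the weight. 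Then
\[
\sum_{n\in\mathbb{N}}\bigl[a_n(\mathcal{L})n^{-1/p'}\bigr]^s\ll\sum_{j}(N_{j+1}-N_j)\varepsilon_j^{\,s},
\]
and Abel summation plus the counting bound $N_j\ll\varepsilon_j^{-t}\Lambda_t^t(\delta)$ — taken with $t=\theta$ when $\theta<s\le r$, and with $t=r$ when $s>r$, the latter using $J_r\approx\Lambda_r$ from Corollary \ref{col} together with the $B_0(q)$-version of the block estimate (Lemma \ref{lemma13}) which is the one valid down to exponent $r$ — collapses the sum to $\Lambda_s(\delta)^s$ (resp. $\Lambda_r(\delta)^s$). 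The three cases (i), (ii), (iii) differ only in which weight cancels which power and in which counting exponent $t$ is admissible; the constant $C(p,q,s,\lambda)$ absorbs the geometric-series and Abel-summation factors.

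The main obstacle is the counting step in the range $\theta<s\le r$ when $q<p$: there one cannot use the crude $A_0$-type block bound but must work with the $B_0(q)$-functional, and Lemma \ref{lemma13} only gives the per-block estimate with exponent $\theta$ — the subtlety is that the interpolation exponent $\theta=p'q/(p'+q)$ is forced to be the summation exponent in the block inequality regardless of $s$, and one must then re-raise to power $s$ while keeping track that $s>\theta$ is exactly what makes the geometric series over $j$ converge. A secondary technical point is handling the breakpoint intervals that are not contained in a single dyadic block $\Delta_k$: there are at most $O(1)$ such intervals per block, or rather per pair of consecutive blocks, so their contribution to $N(\varepsilon)$ is $O(\#\{k:\sigma_k(\delta)\gtrsim\varepsilon\})$, which is again $\ll\varepsilon^{-t}\Lambda_t^t(\delta)$ and causes no loss. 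Once these are in place, assembling (i)--(iii) is routine bookkeeping.
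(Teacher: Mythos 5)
Your overall architecture is the paper's: partition $\mathbb{R}^+$ into intervals of equal local norm $K(I_n)=\varepsilon$, count them block-by-block against $\sigma_k(\delta)$ via Lemmas \ref{lemma11}--\ref{lemma13}, and convert the count into the weighted $\ell^s$ bound through Lemmas \ref{lemma2} and \ref{lemma04}. But the resummation step you actually write down has a genuine gap. From the per-block multiplicity bound $n_k\ll(\sigma_k(\delta)/\varepsilon)^\theta$ you pass to the crude aggregate $N(\varepsilon)\ll\varepsilon^{-\theta}\Lambda_\theta^\theta(\delta)$ and then run a dyadic-in-$\varepsilon$ Abel summation. That chain yields at best $\Lambda_\theta^\theta(\delta)\,\|\mathcal{L}\|^{s-\theta}$, which has the wrong homogeneity and is strictly weaker than $\Lambda_s^s(\delta)$: for $\sigma_k=k^{-1/\theta}$, $k\ge 1$, one has $\Lambda_s<\infty$ (since $s>\theta$) but $\Lambda_\theta=\infty$, so your bound is vacuous exactly where the theorem has content. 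The fix --- and what the paper does --- is to keep the multiplicity information unaggregated: $N(\varepsilon)\le\sum_{n\ge 1}\sharp\{k\in\mathbb{Z}\colon\sigma_k(\delta)\gg n^{1/\theta}\varepsilon\}$, and feed this directly into the exact layer-cake identity $\sum_n[a_n u_n]^s=s\int_0^\infty t^{s-1}\sharp\{n\colon a_nu_n>t\}\,\mathrm{d}t\le s\int_0^\infty t^{s-1}N(t)\,\mathrm{d}t$, which evaluates to $\zeta(s/\theta)\sum_k\sigma_k^s(\delta)$; the hypothesis $s>\theta$ enters through $\zeta(s/\theta)<\infty$, not through a geometric series in $\varepsilon_j$. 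The "sharper weighted version" you allude to is precisely this inequality, and without stating it the argument does not close.

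A second, smaller defect concerns the intervals $I_n$ that straddle several dyadic blocks in the case $1<q<p<\infty$. You dispose of them by claiming each forces $\sigma_k(\delta)\gtrsim\varepsilon$ for some $k$, but there the correct single-interval bound (via \eqref{zz} and \eqref{right} of Lemma \ref{lemma4}) is $\varepsilon\ll\Lambda_r(\delta)_{(k(n),k(n+1))}=:\Sigma_{k(n)}$ over the whole spanned range, from which no individual $\sigma_k\gtrsim\varepsilon$ with a uniform constant follows. The paper splits these spanned ranges into two alternating families so that within each family they are disjoint, and then bounds $\sum\Sigma_k^s$ by $\Lambda_s^s(\delta)$ when $s\le r$ (termwise $\ell^r\hookrightarrow$ superadditivity) and by $\Lambda_r^s(\delta)$ when $s>r$. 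This is the true source of the degradation to $\Lambda_r$ in parts (ii) and (iii) for $s>r$; Lemma \ref{lemma13}, which you cite for it, only supplies the exponent-$\theta$ estimate for intervals packed inside a single block.
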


\begin{proof}
Let $0<\varepsilon< \|\mathcal{L}\|_{p\to q}$ and points $0=c_0<c_1<\ldots<c_{N+1}=\infty,$ $N=N(\varepsilon)$ be chosen so that $K(I_n)=\varepsilon$ if $I_n:=(c_n,c_{n+1}),$ $n=0,\ldots,N-1,$ and $K(I_N)\le\varepsilon.$
Then for any $c_n$ with $n=1,\ldots,N$ there exists an integer $k(n)$ such that $c_n\subset\Delta_{k(n)}=[2^{k(n)-1},2^{k(n)}].$

For all intervals $I_n$ from $n=1$ to $n=N-1$ we have two only choices:
\begin{enumerate}
\item[($\ast$)] two neighbour points, say, $c_{n_0}$ and $c_{n_0+1},$ have different $k(n_0)$ and $k(n_0+1),$ that is  $k(n_0)<k(n_0+1)$ for some $n_0$;
\item[($\ast\ast$)] two or more neighbour points, say $c_{n_1},\ldots,c_{n_1+l_{n_1}-1}$ with $l_{n_1}>1,$ are in the same interval $\Delta_k=[2^{k-1},2^k],$ that is $k(n_1)=k(n_1+1)=\ldots=k(n_1+l_{n_1}-1)$ and $I_i\subset [2^{k(n_1)-1},2^{k(n_1)}]$ with $n_1\le i\le n_1+l_{n_1}-1$ and $l_{n_1}>1.$
\end{enumerate}

Let $1<p\le q<\infty.$
Using Lemmas \ref{lemma1} and either \ref{lemma11} or \ref{lemma12} we obtain
$$\varepsilon=K(I_{n})\le{\bar \gamma}_0 A_0(1)_{I_{n}}\le C_2 \sup_{k(n)\le k\le k(n+1)} \sigma_k(1)=:C_2\sigma_{k_{n}}(1)\ \ \ \ \textrm{if}\ \ \ \ n\in(\ast)$$ and
$$\varepsilon^\theta l_{n}=\sum_{i=n}^{n+l_{n}-1}K(I_{i})^\theta\le C_3 \sigma_{k_{n}}^\theta(1)\ \ \ \ \textrm{if}\ \ \ \ n\in(\ast\ast).$$
As for the interval $I_0=[c_0,c_{1}],$ the estimate \eqref{z} yields
\begin{eqnarray*}\frac{\varepsilon}{{\bar \gamma}_0}\le\sup_{0<t<c_1}t^{-\lambda p'/q}\int_{0}^tv^{p'}(y)\mathrm{d}y \\\le \sup_{-\infty<k\le k(1)}\sup_{2^{k-1}<t<2^{k}}t^{-\lambda p'/q} \int_{0}^tv^{p'}(y)\mathrm{d}y\\
\le 2^{\lambda p'/q} \sup_{-\infty<k\le k(1)}2^{-\lambda p'k/q} \sum_{-\infty< m\le k}\int_{2^{m-1}}^{2^m}v^{p'}(y)\mathrm{d}y\end{eqnarray*}\begin{eqnarray*}
=2^{\lambda p'/q}[1-2^{-\lambda}]^{-p'/q}  \sup_{-\infty<k\le k(1)}2^{-\lambda p'k/q} \sum_{-\infty< m\le k}2^{\lambda p' m/q}\sigma_m^{p'}(1)\\
\le 2^{\lambda p'/q}[1-2^{-\lambda}]^{-p'/q} \sup_{-\infty<m\le k(1)}\sigma_m^{p'}(1)  \sup_{-\infty<k\le k(1)}\sum_{-\infty< m\le k}2^{\lambda p' (m-k)/q}\\ \le 2^{2\lambda p'/q}[1-2^{-\lambda}]^{-2p'/q} \sup_{-\infty<m\le k(1)}\sigma_m^{p'}(1) =:\sigma_{k(0)}(1). \end{eqnarray*}

Thus, \begin{eqnarray*}N(\varepsilon)=\sharp\,\{n\in\mathbb{N}\colon \sigma_{k(n)}\gg\varepsilon\} +\sum_{n\colon l_n>1}\sharp\, \{n\in\mathbb{N}\colon \sigma_{k(n)}\gg\varepsilon l_n^{1/\theta}\}\\\le\sum_{n=1}^\infty\sharp\, \{n\in\mathbb{N}\colon \sigma_{k(n)}\gg n^{1/\theta}\varepsilon\} \le \sum_{n=1}^\infty\sharp\, \{k\in\mathbb{Z}\colon \sigma_{k}\gg n^{1/\theta}\varepsilon\}.\end{eqnarray*}
On the other side we have from Lemma \ref{lemma2} \begin{equation*}\sharp\,\{n\in\mathbb{N}\colon a_n(\mathcal{L})n^{-1/p'}>\varepsilon\}\le N(\varepsilon).\end{equation*} Thus, we have by \cite[Proposition II.1.8]{BenShar} and in view of $\theta<s$
\begin{eqnarray*}\sum_{n\in\mathbb{N}} \bigl[a_n(\mathcal{L})n^{-1/p'}\bigr]^s=s\int_0^\infty t^{s-1}\sharp\,\{n\in\mathbb{N}\colon a_n(\mathcal{L})n^{-1/p'}>t\}\mathrm{d}t\\\le s\int_0^\infty t^{s-1}N(t)\mathrm{d}t \le
s\int_0^\infty \sum_{n=1}^\infty t^{s-1}\sharp\,\{k\in\mathbb{Z}\colon \sigma_k\ge \frac{n^{1/\theta}t}{C_4}\}\mathrm{d}t\\
=sC_4^s\int_0^\infty \tau^{s-1}\sum_{n=1}^\infty n^{-s/\theta}\sharp\,\{k\in\mathbb{Z}\colon \sigma_k\ge \tau\}\mathrm{d}\tau\\
=C_4^s\sum_{n=1}^\infty n^{-s/\theta}\sum_{k\in\mathbb{Z}}\sigma_k^s(1)\le  C_5^s \Lambda_s^s(1).\end{eqnarray*}

Now let $0<q<p<\infty,$ $p>1$ and consider the cases ($\ast$) and ($\ast\ast$) for $n=1,\ldots,N-1.$ If $n\in(\ast)$ then we have by \eqref{zz}, \eqref{5} and \eqref{right} from Lemma \ref{lemma4}
\begin{eqnarray*}\varepsilon=K(I_{n})\ll B_0(\delta)_{I_{n}}\le C_6 \biggl(\sum_{k(n)\le k\le k(n+1)}\sigma_k^r(\delta)\biggr)^{1/r}\\\ll C_6\Lambda_r(\delta)_{(k(n), k(n+1))}=:C_6\Sigma_{k(n)}.\end{eqnarray*}
If $n\in(\ast\ast)$ then Lemma \ref{lemma13} implies
$$\varepsilon^\theta l_{n}=\sum_{i=n}^{n+l_{n}-1}K(I_{i})^\theta\le C_7 \sigma_{k(n)}^\theta(\delta).$$
If $n=0$ then combination \eqref{zz} or \eqref{5} with \eqref{right} gives
\begin{eqnarray*}\varepsilon=K(I_0)\le C_6 B_0(\delta)_{I_0}\le C_6 \biggl(\sum_{-\infty< k\le k_{1}}\sigma_k^r(\delta)\biggr)^{1/r}\\\ll C_6\Lambda_r(\delta)_{(k(0), k(1))}=:C_6\Sigma_{k(0)},\end{eqnarray*} where $k(0)=-\infty.$
Now put $\mathbb{Z}^\ast:=\{k(n)\in\mathbb{Z}\colon n\in (\ast)\}$ and arrange sets $\mathbb{Z}^\ast_1$ and $\mathbb{Z}^\ast_2$ as follows: $$\mathbb{Z}^\ast_1:=\{k(n)\in\mathbb{Z}^\ast, n=0,2,4,\ldots\},\ \ \ \ \
\mathbb{Z}^\ast_2:=\{k(n)\in\mathbb{Z}^\ast, n=1,3,\ldots\}.$$
Therefore, \begin{eqnarray*}N(\varepsilon)=\sum_{n\colon l_n>1}\sharp\, \{n\in\mathbb{N}\colon \sigma_{k(n)}\ge \frac{\varepsilon l_n^{1/\theta}}{C_7}\}+\sharp\,\{\ n\in\mathbb{N}\colon \Lambda_r(\delta)_{(k(n), k(n+1))}\ge\frac{\varepsilon}{C_6}\}\\ \le \sum_{n=1}^\infty\sharp\, \{k\in\mathbb{Z}\colon \sigma_{k}\ge\frac{n^{1/\theta}\varepsilon}{C_7}\}+\sum_{i=1}^2
\sharp\,\{k\in\mathbb{Z}_i^\ast\colon \Sigma_{k(n)}\ge\frac{\varepsilon}{C_6}\}.\end{eqnarray*} Note that for a fixed $i=1,2$ we have $\Sigma_{l}\cap\Sigma_{m}=\emptyset$ when $l\not=m.$

Besides, Lemmas \ref{lemma2} and \ref{lemma04} yield \begin{equation}\label{num}\sharp\,\{n\in\mathbb{N}\colon a_n(\mathcal{L})u_n>\varepsilon\}\le N(\varepsilon),\ \ \ u_n=\begin{cases}n^{-1/p'}, & q\ge 1\\ n^{-1/r}, & q<1\end{cases}.\end{equation}
This implies
\begin{eqnarray*}\sum_{n\in\mathbb{N}} \bigl[a_n(\mathcal{L})u_n\bigr]^s=s\int_0^\infty t^{s-1}\sharp\,\{n\in\mathbb{N}\colon a_n(\mathcal{L})u_n>t\}\mathrm{d}t\\\le s\int_0^\infty t^{s-1}N(t)\mathrm{d}t \le
s\int_0^\infty \sum_{n=1}^\infty t^{s-1}\sharp\,\{k\in\mathbb{Z}\colon \sigma_k\ge \frac{n^{1/\theta}t}{C_7}\}\mathrm{d}t\\
+s\sum_{i=1}^2\int_0^\infty t^{s-1}\sharp\,\{k\in\mathbb{Z}_i^\ast\colon \Sigma_{k}\ge\frac{t}{C_6}\}\mathrm{d}t\\
=sC_8^s\int_0^\infty \tau^{s-1}\sum_{n=1}^\infty n^{-s/\theta}\sharp\,\{k\in\mathbb{Z}\colon \sigma_k\ge \tau\}\mathrm{d}\tau\\
+sC_9^s\sum_{i=1}^2\int_0^\infty \tau^{s-1}\sharp\,\{k\in\mathbb{Z}_i^\ast\colon \Sigma_{k}\ge \tau\}\mathrm{d}\tau\\
=sC_8^s\zeta(s/\theta)\sum_{k\in\mathbb{Z}}\sigma_k^s(\delta)+
sC_9^s\sum_{i=1}^2 \sum_{k\in\mathbb{Z}_i^\ast}\Sigma_k^s
.\end{eqnarray*} If $s\le r$ then
\begin{eqnarray*}
\sum_{k\in\mathbb{Z}_i^\ast}\biggl(\sum_{k_{n}\le k\le k_{n+1}}\sigma_k^r(\delta)\biggr)^{s/r}
\le\sum_{k\in\mathbb{Z}_i^\ast}\sum_{k_{n}\le k\le k_{n+1}}\sigma_k^s(\delta)\le 2\sum_{k\in\mathbb{Z}} \sigma_k^s(\delta).\end{eqnarray*} In the case $s>r$ we have
\begin{eqnarray*}
\sum_{k\in\mathbb{Z}_i^\ast}\biggl(\sum_{k_{n}\le k\le k_{n+1}}\sigma_k^r(\delta)\biggr)^{s/r}
\le\biggl(\sum_{k\in\mathbb{Z}_i^\ast}\sum_{k_{n}\le k\le k_{n+1}}\sigma_k^r(\delta)\biggr)^{s/r}
\le \biggl(2\sum_{k\in\mathbb{Z}} \sigma_k^r(\delta)\biggr)^{s/r}\end{eqnarray*}
and $$\sum_{k\in\mathbb{Z}} \sigma_k^s(\delta)\le \biggl(\sum_{k\in\mathbb{Z}} \sigma_k^r(\delta)\biggr)^{s/r}.$$
Thus,
\begin{eqnarray*}\biggl(\sum_{n\in\mathbb{N}} \bigl[a_n(\mathcal{L})u_n\bigr]^s\biggr)^{1/s}\le
C_{10}\biggl(\sum_{k\in\mathbb{Z}} \sigma_k^s(\delta)\biggr)^{1/s},\ \ \ \ \ \theta<s\le r,\\
\biggl(\sum_{n\in\mathbb{N}} \bigl[a_n(\mathcal{L})u_n\bigr]^s\biggr)^{1/s}\le
C_{10}\biggl(\sum_{k\in\mathbb{Z}} \sigma_k^r(\delta)\biggr)^{1/r},\ \ \ \ \ s>r.\end{eqnarray*}
\end{proof}

\subsection{Case $p=1$}
Denote as before $\Delta_k=[2^{k-1},2^k]$ and $\Omega(l,m)=\bigcup_{l\le k\le m-1}\Delta_k$ with integers $l<m.$ Put also $\bar{v}_{a}(t):=\esup_{a<x<t}v(x),$ $\bar{v}_{\Delta_k}:=\esup_{x\in\Delta_k}v(x)$  and
$$\bar{\sigma}_k(\delta):=\bar{v}_{\Delta_k}\biggl(\int_{\mathbb{R}^+}[\mathrm{e}^{-x2^{k\lambda}}- \mathrm{e}^{-x2^{(k+1)\lambda}}]^\delta\mathrm{d}x\biggr)^{1/q},\ \ \delta=\begin{cases} 1, &q\ge 1\\ q, & q<1\end{cases},$$
\begin{eqnarray*}\bar{J}_s(l,m):&=&\biggl(\int_{\Omega(l,m)} \bigl[\esup_{0<y<t}\chi_{\Omega(l,m)}(y)v(y)\bigr]^{s}
\\&&\times\mathrm{d}\biggl[-\biggl(\int_{\mathbb{R}^+}[\mathrm{e}^{-xt^{\lambda}}- \mathrm{e}^{-x2^{m\lambda}}]^\delta\mathrm{d}x\biggr)^{s/q}\biggr]\biggr)^{1/s},\end{eqnarray*}
$$\bar{J}_s(-\infty,+\infty)=:\bar{J}_s=\biggl(\int_{\mathbb{R}^+} \bar{v}_0^s(t)
t^{-\lambda s/q-1}\mathrm{d}t\biggr)^{1/s},$$ $$\bar{\Lambda}_s(\delta)_{(l,m)}:=\biggl(\sum_{k=l}^{m-1}\bar{\sigma}_k^s(\delta)\biggr)^{1/s},\ \ \
\bar{\Lambda}_s(\delta):=\biggl(\sum_{k\in\mathbb{Z}}\bar{\sigma}_k^s(\delta)\biggr)^{1/s}.$$
The result of the section reads

\begin{theorem}\label{main'}
Let operator $\mathcal{L}\colon L^1\to L^q$ be compact.\\
{\rm\textbf{(i)}}  If $1=p\le q<s<\infty$ then
\begin{eqnarray*}\label{star3'} \biggl(\sum_{n\in\mathbb{N}} a_n^s(\mathcal{L})\biggr)^{1/s} \le
\textrm{const}(q,s,\lambda)\ \bar{J}_s.\end{eqnarray*}
{\rm\textbf{(ii)}} If $0<q<1=p$ then
\begin{eqnarray*}\label{star5} \biggl(\sum_{n\in\mathbb{N}} a_n^s(\mathcal{L})n^{s-s/q}\biggr)^{1/s} \le
\textrm{const}(q,s,\lambda)\begin{cases} \bar{J}_s, & q<s\le q/(1-q),\\
\bar{J}_{q/(1-q)}, & s>q/(1-q).\end{cases}\end{eqnarray*}
\end{theorem}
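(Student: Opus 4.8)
The plan is to mirror the $p>1$ argument from Theorem \ref{theorem2} but with the weighted quantities $\bar\sigma_k(\delta)$, $\bar J_s$, $\bar\Lambda_s(\delta)$ replacing $\sigma_k(\delta)$, $J_s$, $\Lambda_s(\delta)$, and with Lemma \ref{lemma1} and Lemma \ref{lemma2} replaced by their $p=1$ counterparts, namely the estimate $\gamma_1A_{1,I}\le K(I)\le\bar\gamma_1A_{1,I}$ from \eqref{z}'s companion line (resp. \eqref{6} from Lemma \ref{lemma03} when $q<1$) and the first branch of \eqref{up1} (resp. the $p=1$ branch of Lemma \ref{lemma04}). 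First I would record the $p=1$ analog of Lemma \ref{lemma4}, i.e. $\bar\Lambda_s(\delta)\approx\bar J_s$, whose proof is actually simpler: using $\bar v_0(t)=\esup_{0<y<t}v(y)$ is already nondecreasing in $t$, the telescoping sum estimate $\bar J_s^s\approx\sum_{k\in\mathbb Z}2^{-k\lambda s/q}\bar v_{\Omega(-\infty,k+1)}^s$ together with \eqref{15} gives both directions, and the restricted version $\bar\Lambda_s(\delta)_{(l,m)}\gg\bar J_s(l,m)$ follows by the same Proposition 2.1 of \cite{GHS1996} argument (or even more directly since the essential supremum telescopes). I would also need the $p=1$ replacements of Lemmas \ref{lemma11}--\ref{lemma13}: if $c_n\in\Delta_{k(n)}$ with $k(n_0)<k(n_0+1)$ then $A_{1,I_{n_0}}=\esup_{t\in I_{n_0}}\bar v_{c_{n_0}}(t)[t^{-\lambda}-c_{n_0+1}^{-\lambda}]^{1/q}\ll\max_{k(n_0)\le k\le k(n_0+1)}\bar\sigma_k(1)$ directly from monotonicity, and if $c_{n_1},\dots,c_{n_1+l-1}$ all lie in one $\Delta_k$ then $\sum K(I_i)^s\le l\,\bar\sigma_k^s(1)$ trivially because each $A_{1,I_i}\le\bar v_{\Delta_k}[2^{-(k-1)\lambda}-2^{-k\lambda}]^{1/q}\approx\bar\sigma_k(1)$; for the $q<1$ case the same role is played by $B_{4,I}$ from \eqref{6} and the analog of Lemma \ref{lemma13}.

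The second step is the counting argument. As in Theorem \ref{theorem2}, split $n=1,\dots,N-1$ into cases $(\ast)$ and $(\ast\ast)$ and handle $I_0$ by the geometric-series bound on $\sup_{0<t<c_1}\bar v_0(t)t^{-\lambda/q}$, which again telescopes since $\bar v_0$ is nondecreasing. This gives in case (i) ($q\ge1$) the bound
\begin{equation*}
N(\varepsilon)\le\sum_{n=1}^\infty\sharp\{k\in\mathbb Z\colon\bar\sigma_k(1)\gg n^{1/q}\varepsilon\},
\end{equation*}
using $\theta=q$ when $p=1$ (since $\theta=p'q/(p'+q)\to q$ as $p\to1$, i.e.\ $p'=\infty$). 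Combined with $\sharp\{n\colon a_n(\mathcal L)>\varepsilon\}\le N(\varepsilon)$ from the $p=1$ branch of \eqref{up1}, the layer-cake identity \cite[Prop. II.1.8]{BenShar} yields, for $s>q$,
\begin{equation*}
\sum_{n\in\mathbb N}a_n^s(\mathcal L)\le s\int_0^\infty t^{s-1}N(t)\,\mathrm dt\ll\zeta(s/q)\sum_{k\in\mathbb Z}\bar\sigma_k^s(1)=\zeta(s/q)\bar\Lambda_s^s(1)\ll\bar J_s^s,
\end{equation*}
the last step by the $p=1$ analog of Corollary \ref{col}. For part (ii) ($q<1$, so $r=pq/(p-q)=q/(1-q)$ and $u_n=n^{-(1-q)/q}$ from Lemma \ref{lemma04}), the same scheme produces the extra term $\sum_{i=1}^2\sum_{k\in\mathbb Z_i^\ast}\bar\Sigma_k^s$ with $\bar\Sigma_k=\bar\Lambda_r(q)_{(k(n),k(n+1))}$, and the $\ell^s$-versus-$\ell^r$ embedding splits into $s\le r$ (use $\ell^r\hookrightarrow\ell^s$ is false — rather $\sum(\sum\bar\sigma_k^r)^{s/r}\le\sum\sum\bar\sigma_k^s\le2\bar\Lambda_s^s(q)$ since $s/r\le1$) and $s>r$ (collapse the inner sum first, then use $\bar\Lambda_s(q)\le\bar\Lambda_r(q)$), exactly as in the proof of Theorem \ref{theorem2}(ii)--(iii).

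I expect the main obstacle to be purely bookkeeping rather than conceptual: one must verify that all the inequalities $K(I)\le\bar\gamma\,A_{1,I}$ (resp.\ $\le\bar\gamma_4 B_{3,I}$) combine correctly with the local-to-global passage, and in particular that the $p=1$ specialization of $\theta$ is $q$ so that the hypothesis $s>\theta$ becomes exactly the stated $s>q$. One genuine point needing care is the treatment of the endpoint interval $I_0$ when $q<1$: there the relevant quantity is $B_{4,I_0}$ involving $\int_{\mathbb R^+}[\mathrm e^{-xt^\lambda}-\mathrm e^{-x c_1^\lambda}]^q\,\mathrm dx$ rather than a pure power of $t$, so one first estimates this integral below and above by $C\,t^{-\lambda}$ (as in \eqref{15}) before the geometric summation; after that the argument is identical. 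Finally, throughout one uses local integrability of $v$ together with the compactness criteria of Theorem \ref{BC}(iv)--(v) to guarantee that the partition points $c_n$ with $K(I_n)=\varepsilon$ exist and $N(\varepsilon)<\infty$, which is what makes Lemma \ref{lemma04} (and its $q\ge1$ sibling Lemma \ref{lemma2}) applicable.
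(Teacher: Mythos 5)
Your proposal follows essentially the same route as the paper: the paper's proof of Theorem \ref{main'} consists precisely of the remark that one repeats the argument of Section \ref{ub} with Lemmas \ref{lemma4'}--\ref{lemma13'} (the barred analogs you derive, with $\theta=q$, $r=q/(1-q)$ and $u_n$ as you identify) substituted for Lemmas \ref{lemma4} and \ref{lemma11}--\ref{lemma13}. Your additional care with the endpoint interval $I_0$ and the telescoping of $\bar v_0$ is consistent with, and slightly more explicit than, what the paper records.
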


Proof of the theorem follows very similar to Section \ref{ub} by using technical statements below instead of Lemmas \ref{lemma4} and \ref{lemma11} -- \ref{lemma13} respectively.

\begin{lemma}\label{lemma4'} If $0<q<s<\infty$ and $l<m$ then \begin{eqnarray}\label{left'}\bar{\Lambda}_s(\delta)\ll \bar{J}_s\end{eqnarray} and
\begin{eqnarray}\label{right'}\bar{\Lambda}_s(q)_{(l,m)}\gg \bar{J}_s(l,m), \ \ \ \ \ 0<q\le 1.\end{eqnarray}
\end{lemma}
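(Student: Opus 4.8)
The plan is to mimic the proof of Lemma \ref{lemma4}, which is the $p>1$ analogue, but now with the supremum-type averages $\bar v_{\Delta_k}$ and $\bar v_a(t)$ replacing the $L^{p'}$ integral averages. Throughout I will exploit the elementary two-sided estimate (used already in \eqref{15} and in the proof of Lemma \ref{lemma4}) that, for $0<\lambda_0<\lambda$,
\begin{equation*}
C_1\, 2^{-k\lambda}\le \int_{\mathbb{R}^+}\bigl[\mathrm{e}^{-x2^{k\lambda}}-\mathrm{e}^{-x2^{(k+1)\lambda}}\bigr]^\delta\mathrm{d}x\le \int_{\mathbb{R}^+}\mathrm{e}^{-\delta x 2^{k\lambda}}\mathrm{d}x=\delta^{-1}2^{-k\lambda},
\end{equation*}
so that the inner factor $\bigl(\int_{\mathbb{R}^+}[\cdots]^\delta\mathrm{d}x\bigr)^{1/q}$ in $\bar\sigma_k(\delta)$ is comparable to $2^{-k\lambda/q}$ up to constants depending on $\lambda,q$.

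For \eqref{left'}: first reduce $\bar\sigma_k(\delta)$ to $\bar v_{\Delta_k}\,2^{-k\lambda/q}$ by the right-hand inequality above, so $\bar\Lambda_s^s(\delta)\le \delta^{-s/q}\sum_{k\in\mathbb{Z}}\bar v_{\Delta_k}^s\,2^{-k\lambda s/q}$. Since $\bar v_{\Delta_k}=\esup_{x\in\Delta_k}v(x)\le \bar v_0(t)$ for every $t\in\Delta_k$ (the nondecreasing envelope $\bar v_0$ dominates the local sup), and $2^{-k\lambda s/q}\approx t^{-\lambda s/q}$ on $\Delta_k$, we get
\begin{equation*}
\bar v_{\Delta_k}^s\,2^{-k\lambda s/q}\ll \int_{\Delta_k}\bar v_0^s(t)\,t^{-\lambda s/q-1}\,\mathrm{d}t,
\end{equation*}
because the integrand is $\gg \bar v_{\Delta_k}^s\,2^{-k\lambda s/q}$ on an interval of length $2^{k-1}$ and the resulting factor $2^{k}\cdot 2^{-k}$ is a constant. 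Summing over $k\in\mathbb{Z}$ telescopes the right-hand side to $\bar J_s^s$, giving $\bar\Lambda_s(\delta)\ll\bar J_s$.

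For \eqref{right'} (here $\delta=q$, $0<q\le 1$): start from the definition of $\bar J_s(l,m)$, use the left-hand (lower) bound for the $x$-integral with moving endpoint, i.e. $\int_{\mathbb{R}^+}[\mathrm{e}^{-xt^\lambda}-\mathrm{e}^{-x2^{m\lambda}}]^q\mathrm{d}x\le \int_{\mathbb{R}^+}\mathrm{e}^{-qxt^\lambda}\mathrm{d}x=q^{-1}t^{-\lambda}$ to bound the outer measure $\mathrm{d}[-(\cdots)^{s/q}]$ from above after integrating by parts, landing on a sum of the form $\sum_{k=l}^{m-1}(\esup_{0<y<2^k}\chi_{\Omega(l,m)}(y)v(y))^s\,(2^{-(k-1)\lambda s/q}-2^{-k\lambda s/q})$; then observe that $\esup_{0<y<2^k}\chi_{\Omega(l,m)}(y)v(y)=\max_{l\le j\le k}\bar v_{\Delta_j}$, and apply \cite[Proposition 2.1]{GHS1996} exactly as in Lemma \ref{lemma4} to replace $\max_{l\le j\le k}\bar v_{\Delta_j}^s$ by $\bar v_{\Delta_k}^s$ at the cost of a constant. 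Finally use the lower bound $C_1 2^{-k\lambda}\le\int_{\mathbb{R}^+}[\mathrm{e}^{-x2^{k\lambda}}-\mathrm{e}^{-x2^{(k+1)\lambda}}]^q\mathrm{d}x$ from \eqref{15} to recognise the resulting sum as $\gg\sum_{k}\bar\sigma_k^s(q)=\bar\Lambda_s^s(q)_{(l,m)}$.

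The main obstacle is the bookkeeping in \eqref{right'}: one must carry out the integration by parts / Riemann–Stieltjes manipulation on $\mathrm{d}[-(\int_{\mathbb{R}^+}[\mathrm{e}^{-xt^\lambda}-\mathrm{e}^{-x2^{m\lambda}}]^q\mathrm{d}x)^{s/q}]$ carefully, splitting $\Omega(l,m)$ into the dyadic blocks $\Delta_k$ and handling the moving endpoint $2^{m\lambda}$, and then verify that \cite[Proposition 2.1]{GHS1996} applies to the monotone sequence of envelopes $\max_{l\le j\le k}\bar v_{\Delta_j}$. These are the same steps as in Lemma \ref{lemma4}, with integral averages replaced by suprema; since suprema behave at least as well under the discrete monotonicity lemma, no new difficulty arises beyond transcribing the argument, and the constants continue to depend only on $\lambda,q,s$.
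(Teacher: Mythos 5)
Your overall strategy coincides with the paper's: the paper gives no details here, stating only that the proof is analogous to that of Lemma \ref{lemma4}, and your transcription of that argument with the $L^{p'}$ averages replaced by essential suprema is precisely the intended route, including the two-sided bound \eqref{15} for the $x$-integral and the discrete monotonicity step from \cite{GHS1996} (which for suprema can even be replaced by the trivial bound $\max_{l\le j\le k}\bar{v}_{\Delta_j}^s\le\sum_{l\le j\le k}\bar{v}_{\Delta_j}^s$ followed by summing a geometric series in $2^{-k\lambda s/q}$).

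There is, however, one step in your proof of \eqref{left'} that is false as stated: the claim that $\bar{v}_{\Delta_k}\le\bar{v}_0(t)$ for \emph{every} $t\in\Delta_k$, and hence that the integrand $\bar{v}_0^s(t)\,t^{-\lambda s/q-1}$ is $\gg\bar{v}_{\Delta_k}^s\,2^{-k\lambda s/q-k}$ throughout $\Delta_k$. Take $v=\chi_{(2^k-\epsilon,2^k)}$: then $\bar{v}_{\Delta_k}=1$ while $\bar{v}_0(t)=\esup_{0<x<t}v(x)=0$ for all $t<2^k-\epsilon$, so $\int_{\Delta_k}\bar{v}_0^s(t)\,t^{-\lambda s/q-1}\,\mathrm{d}t$ can be arbitrarily small compared with $\bar{v}_{\Delta_k}^s\,2^{-k\lambda s/q}$. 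The monotone envelope $\bar{v}_0(t)$ dominates $\bar{v}_{\Delta_k}$ only once $t\ge 2^k$. The repair is a one-block index shift: integrate over $\Delta_{k+1}$ instead, where $\bar{v}_0(t)\ge\bar{v}_0(2^k)\ge\bar{v}_{\Delta_k}$ and $t^{-\lambda s/q-1}\approx 2^{-k\lambda s/q}\,2^{-k}$ on an interval of length $2^k$, so that $\bar{v}_{\Delta_k}^s\,2^{-k\lambda s/q}\ll\int_{\Delta_{k+1}}\bar{v}_0^s(t)\,t^{-\lambda s/q-1}\,\mathrm{d}t$; summing over $k\in\mathbb{Z}$ (the blocks $\Delta_{k+1}$ are disjoint) still yields $\bar{\Lambda}_s^s(\delta)\ll\bar{J}_s^s$ with constants depending only on $\lambda,q,s$. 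With that correction the rest of your argument, including the bookkeeping for \eqref{right'}, goes through.
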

\textit{Proof} is analogous to the proof of Lemma \ref{lemma4}. $\square$

\begin{remark} Formulas \eqref{left'} and \eqref{right'} are true for all $0<s,q<\infty.$ Moreover, similar to Collorary \ref{col} we have $$\bar{\Lambda}_s(\delta)\approx \bar{J}_s.$$
\end{remark}

The following two Lemmas are concerned the case $1=p\le q<\infty.$ This means that we need $\delta=1$ and  $\bar{\sigma}_k(1)=\bigl[1-2^{-\lambda}\bigr]^{1/q}2^{-k\lambda/q}\bar{v}_{\Delta_k}.$ Proofs of these lemmas are similar to \ref{lemma11}, \ref{lemma12}.
\begin{lemma}\label{lemma11'}
Let $k_1,k_2,k_3\in\mathbb{Z},$ $k_1<k_3,$ $k_1\le k_2\le k_3$ and $z_1\in\Delta_{k_1},$ $z_0\in\Delta_{k_2},$ $z_2\in\Delta_{k_3}.$ Then $$\bigl[z_0^{-\lambda}-z_2^{-\lambda}\bigr]^{1/q}
\esup_{z_1\le y\le z_0}v(y)\ll \max_{k_1\le k\le k_2}{\bar \sigma}_k(1).$$
\end{lemma}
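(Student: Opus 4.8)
The plan is to mimic the proof of Lemma \ref{lemma11}, replacing the integral factor $\bigl(\int_{z_1}^{z_0}v^{p'}\bigr)^{1/p'}$ by the essential supremum $\esup_{z_1\le y\le z_0}v(y)$ and tracking how this changes the bookkeeping. First I would bound the factor $\bigl[z_0^{-\lambda}-z_2^{-\lambda}\bigr]^{1/q}$ from above: since $z_0\in\Delta_{k_2}=[2^{k_2-1},2^{k_2}]$, we have $z_0^{-\lambda}\le 2^{-(k_2-1)\lambda}$, and dropping the non-negative subtrahend $z_2^{-\lambda}$ gives $\bigl[z_0^{-\lambda}-z_2^{-\lambda}\bigr]^{1/q}\le 2^{-(k_2-1)\lambda/q}$. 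Simultaneously, because $z_1\in\Delta_{k_1}$ and $z_0\in\Delta_{k_2}$ with $k_1\le k_2$, the interval $[z_1,z_0]$ is contained in $[2^{k_1-1},2^{k_2}]=\bigcup_{k_1\le k\le k_2}\Delta_k$, whence $\esup_{z_1\le y\le z_0}v(y)\le \max_{k_1\le k\le k_2}\bar v_{\Delta_k}$.

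Next I would rewrite each $\bar v_{\Delta_k}$ in terms of $\bar\sigma_k(1)$ using the definition $\bar\sigma_k(1)=\bigl[1-2^{-\lambda}\bigr]^{1/q}2^{-k\lambda/q}\bar v_{\Delta_k}$, i.e. $\bar v_{\Delta_k}=\bigl[1-2^{-\lambda}\bigr]^{-1/q}2^{k\lambda/q}\bar\sigma_k(1)$. Thus
\begin{eqnarray*}
\bigl[z_0^{-\lambda}-z_2^{-\lambda}\bigr]^{1/q}\esup_{z_1\le y\le z_0}v(y)
&\le& 2^{-(k_2-1)\lambda/q}\bigl[1-2^{-\lambda}\bigr]^{-1/q}\max_{k_1\le k\le k_2}2^{k\lambda/q}\bar\sigma_k(1)\\
&\le& 2^{-(k_2-1)\lambda/q}\bigl[1-2^{-\lambda}\bigr]^{-1/q}\,2^{k_2\lambda/q}\max_{k_1\le k\le k_2}\bar\sigma_k(1)\\
&=& 2^{\lambda/q}\bigl[1-2^{-\lambda}\bigr]^{-1/q}\max_{k_1\le k\le k_2}\bar\sigma_k(1),
\end{eqnarray*}
where in the middle step I used $2^{k\lambda/q}\le 2^{k_2\lambda/q}$ for $k\le k_2$ to pull the largest exponential out of the maximum. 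This gives exactly the claimed estimate with implicit constant $2^{\lambda/q}\bigl[1-2^{-\lambda}\bigr]^{-1/q}$, which depends only on $\lambda$ and $q$ as required by the $\ll$ convention.

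There is no serious obstacle here; the argument is strictly simpler than that of Lemma \ref{lemma11} because the $\ell^{p'}$-type summation $\bigl[\sum_{k_1\le k\le k_2}2^{p'k\lambda/q}\bar\sigma_k^{p'}(1)\bigr]^{1/p'}$ that appeared there is replaced by a plain maximum, so no $(1-2^{-p'\lambda/q})^{-1/p'}$ geometric-series factor is needed. The only point to be careful about is that the essential supremum of $v$ over $[z_1,z_0]$ is genuinely dominated by the maximum of the cell-suprema $\bar v_{\Delta_k}$ over the finitely many cells meeting $[z_1,z_0]$ — this is immediate since $[z_1,z_0]\subseteq\bigcup_{k_1\le k\le k_2}\Delta_k$ and the essential supremum over a union is the maximum of the essential suprema over the pieces.
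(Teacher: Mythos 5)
Your argument is correct and is exactly the adaptation the paper intends: the paper gives no separate proof of this lemma, stating only that it is "similar to" Lemma \ref{lemma11}, and your steps (bounding $\bigl[z_0^{-\lambda}-z_2^{-\lambda}\bigr]^{1/q}$ by $2^{-(k_2-1)\lambda/q}$, covering $[z_1,z_0]$ by $\bigcup_{k_1\le k\le k_2}\Delta_k$, and rewriting $\bar v_{\Delta_k}$ via $\bar\sigma_k(1)$) mirror that proof with the $\ell^{p'}$ sum replaced by a maximum. Your observation that the geometric-series factor $(1-2^{-p'\lambda/q})^{-1/p'}$ disappears in the $p=1$ case is accurate, and the resulting constant $2^{\lambda/q}\bigl[1-2^{-\lambda}\bigr]^{-1/q}$ depends only on $\lambda$ and $q$ as required.
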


\begin{lemma}\label{lemma12'}
Let $I_n=(c_n,c_{n+1}),$ $z_n\in I_n$ and $2^{k-1}<c_1<\ldots<c_{l+1}<2^k.$ Then $$\sum_{n=1}^l \bigl[z_n^{-\lambda}-c_{n+1}^{-\lambda}\bigr]\bigl[\esup_{c_n\le y\le z_n}v(y)\bigr]^{q}
\ll {\bar \sigma}_k^q(1).$$
\end{lemma}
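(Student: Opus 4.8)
\textbf{Proof plan for Lemma \ref{lemma12'}.}
The plan is to mimic the argument used in Lemma \ref{lemma12}, replacing the $L^{p'}$-integral of $v$ over an interval by the essential supremum of $v$ on that interval, which is exactly the substitution forced by the passage from $p>1$ to $p=1$. First I would crudely monotonise the left-hand side: since all points $c_1,\ldots,c_{l+1}$ lie in the single dyadic block $\Delta_k=[2^{k-1},2^k]$, every difference $z_n^{-\lambda}-c_{n+1}^{-\lambda}$ with $z_n\in I_n=(c_n,c_{n+1})$ is dominated by $c_n^{-\lambda}-c_{n+1}^{-\lambda}$, and every local sup $\esup_{c_n\le y\le z_n}v(y)$ is dominated by $\esup_{c_n\le y\le c_{n+1}}v(y)\le\bar v_{\Delta_k}$. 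Pulling this last bound out of the sum is the decisive simplification available here that is not available in Lemma \ref{lemma12}: because $p=1$, the "weight part" of each term is uniformly controlled by the single quantity $\bar v_{\Delta_k}^{\,q}$.

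Next I would estimate what remains, namely $\bigl(\sum_{n=1}^{l}(c_n^{-\lambda}-c_{n+1}^{-\lambda})\bigr)\cdot\bar v_{\Delta_k}^{\,q}$. The inner sum telescopes to $c_1^{-\lambda}-c_{l+1}^{-\lambda}\le 2^{-(k-1)\lambda}-2^{-k\lambda}=2^{-k\lambda}(2^\lambda-1)$, using $2^{k-1}<c_1$ and $c_{l+1}<2^k$. Hence the whole left-hand side is $\ll 2^{-k\lambda}\bar v_{\Delta_k}^{\,q}$. Since by definition $\bar\sigma_k(1)=[1-2^{-\lambda}]^{1/q}2^{-k\lambda/q}\bar v_{\Delta_k}$, we have $\bar\sigma_k^{\,q}(1)=[1-2^{-\lambda}]\,2^{-k\lambda}\bar v_{\Delta_k}^{\,q}$, so $2^{-k\lambda}\bar v_{\Delta_k}^{\,q}=[1-2^{-\lambda}]^{-1}\bar\sigma_k^{\,q}(1)$, and the claimed bound $\sum_{n=1}^{l}[z_n^{-\lambda}-c_{n+1}^{-\lambda}][\esup_{c_n\le y\le z_n}v(y)]^{q}\ll\bar\sigma_k^{\,q}(1)$ follows with an explicit constant depending only on $\lambda$ and $q$. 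Note that unlike in Lemma \ref{lemma12}, no H\"older step is needed, precisely because the sup over $v$ is already the $n$-independent constant $\bar v_{\Delta_k}$.

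There is essentially no hard part: the only point requiring a moment's care is the direction of the monotonicity estimates (checking that the exponential-difference factor and the $v$-factor both \emph{increase} under the enlargement of the intervals, so that replacing $z_n$ by $c_n$ and by $c_{n+1}$ respectively is a valid upper bound), together with the bookkeeping that $c_1>2^{k-1}$ and $c_{l+1}<2^k$ are used in the telescoping step. The remark following Lemma \ref{lemma4'} guarantees that all the displayed equivalences $\bar\Lambda_s(\delta)\approx\bar J_s$ hold for the full range of parameters, so these estimates feed cleanly into the proof of Theorem \ref{main'} along the lines of Section \ref{ub}.
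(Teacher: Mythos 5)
Your proof is correct and is essentially the argument the paper intends: the paper gives no separate proof of Lemma \ref{lemma12'}, stating only that it is ``similar to'' Lemma \ref{lemma12}, and your monotonisation--telescoping argument is exactly the $p'=\infty$ degeneration of that proof, where the H\"older step with exponents $(p'+q)/q$ and $(p'+q)/p'$ collapses to pulling the uniform bound $\bar v_{\Delta_k}^{\,q}$ out of the sum. The explicit constant $2^\lambda$ you obtain and the use of $c_1>2^{k-1}$, $c_{l+1}<2^k$ in the telescoping step are all in order.
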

We complete the row of technical lemmas and the section by a statement which is working when $0<q<1=p.$
\begin{lemma}\label{lemma13'}
Let $I_n=(c_n,c_{n+1})$ and $2^{k-1}<c_1<\ldots<c_{l+1}<2^k.$ Then we have for $0<q<1$ \begin{eqnarray*}&&\sum_{n=1}^l \biggl(\int_{c_n}^{c_{n+1}}\bigl[\esup_{c_n\le y\le t}v(y)\bigr]^{q/(1-q)}\biggr.\\&&\biggl.\times \mathrm{d}\biggl[-
\biggl(\int_{\mathbb{R}^+}\bigl[\mathrm{e}^{-xt^\lambda}- \mathrm{e}^{-xc_{n+1}^\lambda}\bigr]^q\mathrm{d}x\biggr)^{1/(1-q)}\biggr]\biggr)^{1-q}\ll {\bar \sigma}_k^{q}(q).\end{eqnarray*}
\end{lemma}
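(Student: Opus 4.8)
The plan is to mimic closely the proof of Lemma \ref{lemma13}, replacing the integral norm $\bigl(\int_{c_n}^{c_{n+1}}v^{p'}\bigr)^{1/p'}$ by the supremum $\esup_{c_n\le y\le c_{n+1}}v(y)$, which is the natural $p'=\infty$ limit appropriate to $p=1$. First I would estimate the inner integral in each summand from above: since $t\mapsto\esup_{c_n\le y\le t}v(y)$ is non-decreasing on $[c_n,c_{n+1}]$, its supremum over that interval is $\esup_{c_n\le y\le c_{n+1}}v(y)$, so each $n$-th term is at most
\begin{eqnarray*}
\bigl[\esup_{c_n\le y\le c_{n+1}}v(y)\bigr]^{q/(1-q)}\biggl(\int_{c_n}^{c_{n+1}}\mathrm{d}\biggl[-\biggl(\int_{\mathbb{R}^+}\bigl[\mathrm{e}^{-xt^\lambda}-\mathrm{e}^{-xc_{n+1}^\lambda}\bigr]^q\mathrm{d}x\biggr)^{1/(1-q)}\biggr]\biggr)^{1-q}.
\end{eqnarray*}
The remaining one-dimensional integral telescopes: it equals $\bigl(\int_{\mathbb{R}^+}\bigl[\mathrm{e}^{-xc_n^\lambda}-\mathrm{e}^{-xc_{n+1}^\lambda}\bigr]^q\mathrm{d}x\bigr)^{(1-q)\cdot\frac{1}{1-q}}=\int_{\mathbb{R}^+}\bigl[\mathrm{e}^{-xc_n^\lambda}-\mathrm{e}^{-xc_{n+1}^\lambda}\bigr]^q\mathrm{d}x$, exactly as in the proof of Lemma \ref{lemma13}. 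So the $n$-th term is bounded by $\bigl[\esup_{c_n\le y\le c_{n+1}}v(y)\bigr]^{q/(1-q)}\int_{\mathbb{R}^+}\bigl[\mathrm{e}^{-xc_n^\lambda}-\mathrm{e}^{-xc_{n+1}^\lambda}\bigr]^q\mathrm{d}x$.

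Next I would sum over $n=1,\ldots,l$. Since all the $c_n$ lie in $\Delta_k=[2^{k-1},2^k]$, each supremum is dominated by $\bar v_{\Delta_k}=\esup_{x\in\Delta_k}v(x)$, so $\sum_{n=1}^l\bigl[\esup_{c_n\le y\le c_{n+1}}v(y)\bigr]^{q/(1-q)}\int_{\mathbb{R}^+}[\cdots]^q\le \bar v_{\Delta_k}^{q/(1-q)}\sum_{n=1}^l\int_{\mathbb{R}^+}\bigl[\mathrm{e}^{-xc_n^\lambda}-\mathrm{e}^{-xc_{n+1}^\lambda}\bigr]^q\mathrm{d}x$. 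To finish I need that the sum $\sum_{n=1}^l\int_{\mathbb{R}^+}\bigl[\mathrm{e}^{-xc_n^\lambda}-\mathrm{e}^{-xc_{n+1}^\lambda}\bigr]^q\mathrm{d}x$ is $\ll \int_{\mathbb{R}^+}\bigl[\mathrm{e}^{-x2^{k\lambda}}-\mathrm{e}^{-x2^{(k+1)\lambda}}\bigr]^q\mathrm{d}x\approx 2^{-k\lambda}$ (up to constants depending on $\lambda,q$): indeed each term is comparable to $(c_n^{-\lambda}-c_{n+1}^{-\lambda})$ times a constant (because, for $q<1$, $\int_{\mathbb{R}^+}[\mathrm{e}^{-xa}-\mathrm{e}^{-xb}]^q\mathrm{d}x\approx a^{-1}-b^{-1}$ for $a<b$ with constants depending only on $q$), and the $(c_n^{-\lambda}-c_{n+1}^{-\lambda})$ telescope to $c_1^{-\lambda}-c_{l+1}^{-\lambda}\le 2^{-(k-1)\lambda}$; comparing this with $\int_{\mathbb{R}^+}\bigl[\mathrm{e}^{-x2^{k\lambda}}-\mathrm{e}^{-x2^{(k+1)\lambda}}\bigr]^q\mathrm{d}x\approx 2^{-k\lambda}$ gives an absolute constant. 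Thus the whole left side is $\ll \bar v_{\Delta_k}^{q/(1-q)}2^{-k\lambda}$. Since $(1-q)\cdot\frac{q/(1-q)}{\cdot}$... more precisely, raising $\bar v_{\Delta_k}^{q/(1-q)}2^{-k\lambda}$ is already the bare quantity; recalling $\bar\sigma_k(q)=\bar v_{\Delta_k}\bigl(\int_{\mathbb{R}^+}[\mathrm{e}^{-x2^{k\lambda}}-\mathrm{e}^{-x2^{(k+1)\lambda}}]^q\mathrm{d}x\bigr)^{1/q}\approx \bar v_{\Delta_k}2^{-k\lambda/q}$, we have $\bar\sigma_k^q(q)\approx \bar v_{\Delta_k}^q2^{-k\lambda}$, and one checks that the exponents match after the outer power $1-q$ is applied at the very start — so the chain yields $\ll \bar\sigma_k^q(q)$ as claimed.

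The routine bookkeeping with the constants (tracking the $\lambda_0$-type lower bounds for $\int_{\mathbb{R}^+}[\mathrm{e}^{-xt^\lambda}-\mathrm{e}^{-x2^{(k+1)\lambda}}]^q\mathrm{d}x$ as in \eqref{15}, and the upper bound by $\int_{\mathbb{R}^+}\mathrm{e}^{-qxt^\lambda}\mathrm{d}x=(qt^\lambda)^{-1}$) is exactly parallel to the $p>1$ case and I would simply cite the analogy. The one place that needs a sentence of care is the telescoping identity $\int_{c_n}^{c_{n+1}}\mathrm{d}\bigl[-(\int_{\mathbb{R}^+}[\mathrm{e}^{-xt^\lambda}-\mathrm{e}^{-xc_{n+1}^\lambda}]^q\mathrm{d}x)^{1/(1-q)}\bigr]$: at $t=c_{n+1}$ the integrand inside vanishes, and at $t=c_n$ it equals $(\int_{\mathbb{R}^+}[\mathrm{e}^{-xc_n^\lambda}-\mathrm{e}^{-xc_{n+1}^\lambda}]^q\mathrm{d}x)^{1/(1-q)}$, so the increment of the monotone function is precisely that value, and after the outer exponent $1-q$ we recover $\int_{\mathbb{R}^+}[\mathrm{e}^{-xc_n^\lambda}-\mathrm{e}^{-xc_{n+1}^\lambda}]^q\mathrm{d}x$. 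The main obstacle — such as it is — is merely making sure the exponent arithmetic lines up when passing between the $1/(1-q)$, $q/(1-q)$ and $1-q$ powers and the definition of $\bar\sigma_k(q)$; there is no genuine analytic difficulty beyond what Lemma \ref{lemma13} already contains.

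\begin{proof}
By the monotonicity of $t\mapsto\esup_{c_n\le y\le t}v(y)$ on $[c_n,c_{n+1}]$,
\begin{eqnarray*}
&&\sum_{n=1}^l \biggl(\int_{c_n}^{c_{n+1}}\bigl[\esup_{c_n\le y\le t}v(y)\bigr]^{q/(1-q)}\mathrm{d}\biggl[-\biggl(\int_{\mathbb{R}^+}\bigl[\mathrm{e}^{-xt^\lambda}-\mathrm{e}^{-xc_{n+1}^\lambda}\bigr]^q\mathrm{d}x\biggr)^{1/(1-q)}\biggr]\biggr)^{1-q}\\
&&\le \sum_{n=1}^l \bigl[\esup_{c_n\le y\le c_{n+1}}v(y)\bigr]^{q}\biggl(\int_{c_n}^{c_{n+1}}\mathrm{d}\biggl[-\biggl(\int_{\mathbb{R}^+}\bigl[\mathrm{e}^{-xt^\lambda}-\mathrm{e}^{-xc_{n+1}^\lambda}\bigr]^q\mathrm{d}x\biggr)^{1/(1-q)}\biggr]\biggr)^{1-q}\\
&&=\sum_{n=1}^l \bigl[\esup_{c_n\le y\le c_{n+1}}v(y)\bigr]^{q}\int_{\mathbb{R}^+}\bigl[\mathrm{e}^{-xc_n^\lambda}-\mathrm{e}^{-xc_{n+1}^\lambda}\bigr]^q\mathrm{d}x,
\end{eqnarray*}
the last equality because the monotone function inside increases by exactly its value at $t=c_n$. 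Since $c_n\in\Delta_k$ for all $n=1,\ldots,l+1,$ each supremum is at most $\bar v_{\Delta_k}.$ Hence, arguing as for \eqref{15},
\begin{eqnarray*}
\sum_{n=1}^l \bigl[\esup_{c_n\le y\le c_{n+1}}v(y)\bigr]^{q}\int_{\mathbb{R}^+}\bigl[\mathrm{e}^{-xc_n^\lambda}-\mathrm{e}^{-xc_{n+1}^\lambda}\bigr]^q\mathrm{d}x
\le \bar v_{\Delta_k}^{q}\sum_{n=1}^l\int_{\mathbb{R}^+}\bigl[\mathrm{e}^{-xc_n^\lambda}-\mathrm{e}^{-xc_{n+1}^\lambda}\bigr]^q\mathrm{d}x\\
\ll \bar v_{\Delta_k}^{q}\sum_{n=1}^l\bigl(c_n^{-\lambda}-c_{n+1}^{-\lambda}\bigr)
= \bar v_{\Delta_k}^{q}\bigl(c_1^{-\lambda}-c_{l+1}^{-\lambda}\bigr)\le \bar v_{\Delta_k}^{q}2^{-(k-1)\lambda}\ll \bar v_{\Delta_k}^{q}2^{-k\lambda}\approx {\bar \sigma}_k^{q}(q),
\end{eqnarray*}
which proves the claim.
\end{proof}
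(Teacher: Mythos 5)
Your reduction of the left-hand side is correct as far as it goes: pulling out $\esup_{c_n\le y\le c_{n+1}}v(y)$ by monotonicity, the exponent arithmetic $(q/(1-q))\cdot(1-q)=q$, and the evaluation of the Stieltjes increment over $[c_n,c_{n+1}]$ as $\int_{\mathbb{R}^+}[\mathrm{e}^{-xc_n^\lambda}-\mathrm{e}^{-xc_{n+1}^\lambda}]^q\mathrm{d}x$ are all fine. (For the record, the paper offers no proof of this lemma at all; it only declares it analogous to Lemma \ref{lemma13}, so the only possible comparison is with that $p>1$ argument.) The gap is in your final step, where you assert that for $0<q<1$ and $a<b$ one has $\int_{\mathbb{R}^+}[\mathrm{e}^{-xa}-\mathrm{e}^{-xb}]^q\mathrm{d}x\approx a^{-1}-b^{-1}$ with constants depending only on $q$. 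Only the direction $\gg$ is true (the integrand is $\le 1$, so raising it to the power $q<1$ increases it). Writing $b=a+u$ and substituting $t=ax$, one finds $\int_{\mathbb{R}^+}\mathrm{e}^{-qxa}(1-\mathrm{e}^{-xu})^q\mathrm{d}x\approx u^{q}a^{-1-q}$ for $u\le a$, whereas $a^{-1}-b^{-1}\approx u a^{-2}$; the ratio is $(u/a)^{q-1}\to\infty$ as $u\to 0$. Hence the inequality $\sum_{n}\int_{\mathbb{R}^+}[\mathrm{e}^{-xc_n^\lambda}-\mathrm{e}^{-xc_{n+1}^\lambda}]^q\mathrm{d}x\ll\sum_{n}(c_n^{-\lambda}-c_{n+1}^{-\lambda})$ is false, and the telescoping does not apply. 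Note also that \eqref{15}, which you invoke, is a \emph{lower} bound for such integrals and points in the opposite direction from what you need.

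The failure is not cosmetic. Take the $c_n^\lambda$ equally spaced in $[2^{(k-1)\lambda},2^{k\lambda}]$, so that $u_n:=c_{n+1}^\lambda-c_n^\lambda\approx 2^{k\lambda}/l$; then $\sum_{n=1}^l\int_{\mathbb{R}^+}[\mathrm{e}^{-xc_n^\lambda}-\mathrm{e}^{-xc_{n+1}^\lambda}]^q\mathrm{d}x\approx\sum_n u_n^{q}\,2^{-k\lambda(1+q)}\approx l^{1-q}2^{-k\lambda}$, which exceeds $\int_{\mathbb{R}^+}[\mathrm{e}^{-x2^{k\lambda}}-\mathrm{e}^{-x2^{(k+1)\lambda}}]^q\mathrm{d}x\approx 2^{-k\lambda}$ by the unbounded factor $l^{1-q}$. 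Worse, if $v$ is constant on $\Delta_k$ then your first three (correct) steps compute the left-hand side of the lemma \emph{exactly} as $\bar v_{\Delta_k}^{q}\sum_n\int_{\mathbb{R}^+}[\mathrm{e}^{-xc_n^\lambda}-\mathrm{e}^{-xc_{n+1}^\lambda}]^q\mathrm{d}x\approx l^{1-q}\,\bar\sigma_k^{q}(q)$, so no constant independent of $l$ can close the argument along this route, and the example indicates that the stated bound itself cannot hold for arbitrary points $c_n$ without an extra factor $l^{1-q}$ or some additional hypothesis. The analogy with Lemma \ref{lemma13} breaks down precisely because there (for $\delta=1$) the identity $\int_{\mathbb{R}^+}[\mathrm{e}^{-xa}-\mathrm{e}^{-xb}]\,\mathrm{d}x=a^{-1}-b^{-1}$ is exact and the sum genuinely telescopes, while for $\delta=q<1$ it does not; this is the one point where the $p=1$, $q<1$ case requires a genuinely new idea rather than bookkeeping.
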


\section{Asymptotic estimates}\label{ab}

Put  $1\le p\le q<\infty,$ $\theta=p'q/(p'+q)$ and define a Hardy integral operator \begin{equation}\label{hardyy} H_{I;v,w}f(t):=w(t)\int_a^t f(y)v(y)\mathrm{d}y,\ \ \ \ \ t\in I,\end{equation} with weight function $w(t)=t^{-(\lambda+1)/q}.$  In the case $q\ge 1$ Lemma \ref{dec} implies \begin{equation}\label{LH} K(I)=\|\mathcal{L}_I\|_{L^p(I)\to L^q}\ll \|H_{I;v,w}\|_{L^p(I)\to L^q(I)}.\end{equation}

If we take non-negative constants $\xi$ and $\zeta$ instead of weights $v,w\ge 0$ then we obtain for the norm $\|H_{I;1,1}\|_{L^p(I)\to L^q(I)}$ by changing variables: \begin{eqnarray}\label{H0}\|H_{I;\xi,\zeta}\|_{L^p(I)\to L^q(I)}=\alpha_{pq}\,\xi\,\zeta\,\bigl|b-a\bigr|^{1-1/p+1/q},
\end{eqnarray} where $$\alpha_{pq}=\sup_{\|f\chi_{[0,1]}\|_p\le 1}\biggl(\int_0^1\biggl|\int_0^tf(s)\mathrm{d}s\biggr|^q\mathrm{d}t\biggr)^{1/q}.$$ If $1<p\le q<\infty$ then $\alpha_{p,q}=(\theta/p')^{1/p'}(\theta/q)^{1/q}.$ For $1=p\le q<\infty$ we have $\alpha_{1,q}=1.$
The next statement easy follows from triangle inequality for all $1<p\le q<\infty$:
\begin{equation}\label{tri0}\|H_{I;v_1,w_1}-H_{I;v_2,w_2}\|_{p\to q}\le \|w_1\|_{q,I}\|v_1-v_2\|_{p',I}+\|v_2\|_{p',I}\|w_1-w_2\|_{q,I}.\end{equation} If $1=p\le q<\infty$ then
\begin{equation}\label{tri1}\|H_{I;v,w_1}-H_{I;v,w_2}\|_{1\to q}\le \|v\|_{\infty,I}\|w_1-w_2\|_{q,I}.\end{equation}

The result of the section reads
\begin{theorem}\label{kr}
Let $1\le p\le q<\infty,$ $\theta:=p'q/(p'+q)$ and suppose the operator $\mathcal{L}:L^p\to L^q$ is compact.\\ {\rm\textbf{(i)}} If $p>1$ and $\sum_{k\in\mathbb{Z}}\sigma_k^\theta<\infty$ then $$\limsup_{n\to \infty}n^{1/q}a_n(\mathcal{L})\ll
\biggl(\int_{\mathbb{R}^+}t^{-(\lambda+1)\theta/q}v^\theta(t) \mathrm{d}t\biggr)^{1/\theta}.$$
{\rm\textbf{(ii)}} If $p=1,$ $v_\epsilon(t)=\lim_{\epsilon\to 0}\|v\|_{L^\infty(t-\epsilon,t+\epsilon)}$ and $\sum_{k\in\mathbb{Z}}\bar{\sigma}_k^q<\infty$ then we have
$$\limsup_{n\to \infty}n^{1/q}a_n(\mathcal{L})\ll
\biggl(\int_{\mathbb{R}^+}t^{-\lambda-1}v_\epsilon^q(t)\mathrm{d}t\biggr)^{1/q}.$$

\end{theorem}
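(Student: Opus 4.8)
The plan is to deduce Theorem \ref{kr} from the structure already set up in Sections \ref{an}--\ref{sn}, namely the localization of $\mathcal{L}$ via the norms $K(I)$ together with the relation \eqref{LH} between $K(I)$ and the Hardy operator $H_{I;v,w}$. The key idea, borrowed from the Edmunds--Evans--Harris and Lang--Stepanov technique for the Hardy operator, is that on each small interval the norm $K(I)$ is essentially the Hardy norm, which in turn is well approximated by the ``frozen-coefficient'' quantity \eqref{H0} with constant weights $\xi=v$ (or $\esup v$) and $\zeta=w=t^{-(\lambda+1)/q}$ evaluated at a suitable point. First I would fix $\varepsilon>0$ small and, exactly as in Lemma \ref{lemma2}/\ref{lemma04}, produce points $0=c_0<c_1<\dots<c_{N+1}=\infty$ with $K(I_n)=\varepsilon$ for $n=0,\dots,N-1$ and $K(I_N)\le\varepsilon$, so that $a_{N+1}(\mathcal{L})\le\varepsilon(N+1)^{1/p'}$ (and $a_{N+1}\le\varepsilon$ when $p=1$); the whole game is then to estimate $N=N(\varepsilon)$ from below in terms of the integral $\int t^{-(\lambda+1)\theta/q}v^\theta$, because $\limsup_n n^{1/q}a_n(\mathcal{L})$ is controlled by $\limsup_{\varepsilon\to 0}\varepsilon\,N(\varepsilon)^{1/q}$ once one checks that the $p'$-power in Lemma \ref{lemma2} does not spoil the exponent (here one uses $1/q$, not $1/p'$, which is why the hypothesis $p\le q$ and the appearance of $\theta$ enter — on a single ``dyadic'' block the relevant exponent collapses to $\theta$ by Lemmas \ref{lemma12}, \ref{lemma12'}).

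Next I would carry out the comparison on each $I_n$. Using \eqref{LH} and the triangle-inequality estimates \eqref{tri0}--\eqref{tri1}, on an interval $I=(c,d)$ one has
\begin{equation*}
K(I)\approx \|H_{I;v,w}\|_{L^p(I)\to L^q(I)}=\alpha_{pq}\,\|w\|_{?}\,\|v\|_{?}\,|d-c|^{1-1/p+1/q}+O\bigl(\text{oscillation terms}\bigr),
\end{equation*}
so that, summing the $\theta$-th powers of $K(I_n)$ over a maximal run of $c_n$ lying in one dyadic interval $\Delta_k$ and invoking Lemmas \ref{lemma12}, \ref{lemma13} (resp. \ref{lemma12'}, \ref{lemma13'} when $p=1$), the contribution of that block telescopes to $\sigma_k^\theta$ (resp. $\bar\sigma_k^q$). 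Hence $\varepsilon^\theta N(\varepsilon)\ll\sum_{k}\sigma_k^\theta$ is too crude; instead one refines this to an integral: as $\varepsilon\to 0$ the mesh $\{c_n\}$ refines, the oscillation terms are $o(1)$ by local integrability of $v$ and by Lebesgue differentiation (this is exactly where $v_\epsilon$ in part (ii) comes from, replacing $v$ by its upper-semicontinuous envelope so that pointwise values make sense), and a Riemann-sum argument gives
\begin{equation*}
\varepsilon^q N(\varepsilon)\longrightarrow \beta_{pq}\int_{\mathbb{R}^+}t^{-(\lambda+1)\theta/q}v^\theta(t)\,\mathrm{d}t
\end{equation*}
up to the constant $\alpha_{pq}$ raised to an appropriate power. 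Then $\varepsilon(N+1)^{1/q}a$-type bookkeeping yields the claimed $\limsup$.

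The main obstacle I expect is precisely this last Riemann-sum / envelope step: one must show that the sandwich $K(I)\asymp\alpha_{pq}|I|^{1-1/p+1/q}v(\cdot)w(\cdot)$ is asymptotically sharp, i.e. the lower and upper constants in Lemma \ref{lemma1} together with the oscillation error in \eqref{tri0} do not obstruct passage to a genuine integral with the single constant governed by $\alpha_{pq}$. Concretely I would proceed as follows: (a) from $K(I_n)=\varepsilon$ and the two-sided bound get $|I_n|^{1-1/p+1/q}\asymp\varepsilon/(v\,w)$ on $I_n$, so $I_n$ shrinks uniformly as $\varepsilon\to 0$; (b) on each $I_n$ bound the oscillation of $v$ by $\omega_n:=\esup_{I_n}v-\essinf_{I_n}v$, which $\to 0$ a.e.\ and in an averaged sense; (c) write $N(\varepsilon)=\sum_n 1$ and compare with $\sum_n|I_n|\,|I_n|^{-1}$, turning $\sum_n|I_n|\cdot(\varepsilon^{-1}v\,w)^{q/(q-q/p+1)}$ into a Riemann sum for $\int (\varepsilon^{-1}vw)^{1/(1-1/p+1/q)}$; (d) identify the exponent $\tfrac{1}{1-1/p+1/q}=\theta/q$ — using $w=t^{-(\lambda+1)/q}$ — so the integrand becomes $t^{-(\lambda+1)\theta/q}v(t)^{\theta}$ times $\varepsilon^{-\theta}$, whence $N(\varepsilon)\asymp\varepsilon^{-q}\cdot(\text{that integral})^{q/\theta}$ and finally $\limsup_n n^{1/q}a_n\ll(\int t^{-(\lambda+1)\theta/q}v^\theta)^{1/\theta}$. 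Part (ii) is the same with $p=1$, $\theta=q$, $v$ replaced by $v_\epsilon$, and \eqref{tri1} in place of \eqref{tri0}; the dyadic-block lemmas \ref{lemma12'}, \ref{lemma13'} guarantee the blocks telescope to $\bar\sigma_k^q$ so that the summability hypothesis $\sum_k\bar\sigma_k^q<\infty$ (resp.\ $\sum_k\sigma_k^\theta<\infty$) makes $N(\varepsilon)$ finite and the limit procedure legitimate.
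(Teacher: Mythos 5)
Your overall scheme is the paper's: localize via the partition $K(I_n)=\varepsilon$, pass to the Hardy operator through \eqref{LH}, freeze the coefficients on a step-function approximation and use the exact norm \eqref{H0}, control the tails outside $[1/M,M]$ by the hypothesis $\sum_k\sigma_k^\theta<\infty$ (this is \eqref{hvost1}, which you mention only in passing), and finish with Lemma \ref{lemma2}. But the exponent bookkeeping at the heart of your argument is wrong, and it is not a harmless slip. Since $1-1/p+1/q=1/p'+1/q=1/\theta$, the exponent in your step (d) is $\frac{1}{1-1/p+1/q}=\theta$, not $\theta/q$, and the correct counting estimate is $\limsup_{\varepsilon\to0}\varepsilon^{\theta}N(\varepsilon)\ll\alpha_{pq}^{\theta}\int_{\mathbb{R}^+}v^{\theta}w^{\theta}$, not $\varepsilon^{q}N(\varepsilon)\to\cdots$. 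The $\theta$-power is exactly what is needed: combined with $a_{N+1}\le\varepsilon(N+1)^{1/p'}$ it gives $a_{N+1}\ll N^{1/p'-1/\theta}(\int v^\theta w^\theta)^{1/\theta}=N^{-1/q}(\cdots)^{1/\theta}$, whereas your version leaves an uncancelled factor $N^{1/p'}$ for $p>1$. Relatedly, you only need an \emph{upper} bound on $N(\varepsilon)$ (not the two-sided asymptotic $N(\varepsilon)\asymp\cdots$ you aim for), and no Riemann-sum or Lebesgue-differentiation argument is required: the count uses the equality $K(I_n)=\varepsilon$ from the construction plus the one-sided bound $K(I_n)\ll\|H_{I_n;v_\eta,w_\eta}\|+\text{error}$, and the identity $\theta(1-1/p+1/q)=1$ makes $\sum_{I_n\subset U_j}\|H_{I_n;\xi_j,\zeta_j}\|^{\theta}=\alpha_{pq}^{\theta}\xi_j^{\theta}\zeta_j^{\theta}\sum_{I_n\subset U_j}|I_n|\le\alpha_{pq}^{\theta}\xi_j^{\theta}\zeta_j^{\theta}|U_j|$ exact, with the approximation error handled by \eqref{tri0} and the $O(\eta)$ estimate \eqref{eta}. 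Your worry about whether the two-sided sandwich is ``asymptotically sharp'' is therefore a red herring, while the actual arithmetic you propose does not yield the stated theorem.

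Part (ii) is also not ``the same with $p=1$.'' One cannot approximate $v$ in the $L^\infty$ norm by step functions, so the paper replaces $v$ by $v_\epsilon$ via duality, takes a \emph{one-sided} step approximation $v_\eta\ge v_\epsilon$, and controls the gap $\|H_{U,\xi,\zeta}\|_{1\to q}-\|H_{U,\xi,v_\epsilon}\|_{1\to q}$ through a lower bound for the adjoint operator in terms of the non-increasing rearrangement of $v_\epsilon\chi_U$ (estimate \eqref{stok} and Lemma \ref{lem}, the analogue of Lemma 4.5 of \cite{EHL1998}), leading to \eqref{EHL}. Your upper-semicontinuous-envelope remark points in the right direction but supplies none of this mechanism, which is the genuinely new technical content of the $p=1$ case.
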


\begin{proof}{\rm\textbf{(i)}}
Fix a large $M>0.$ Since the set of step functions is dense in a Lebesgue space then for any $\eta>0$ there exist simple functions $v_\eta,w_\eta$ on $[1/M,M]=\bigcup_{j=1}^{m_\eta}U_j$ such that $$\|v-v_\eta\|_{p',[1/M,M]}\le\eta,\ \ \ \ \ \ \ \ \ \ \|w-w_\eta\|_{q,[1/M,M]}\le\eta.$$ We may assume $v_\eta=\sum_{j=1}^{m_\eta}\xi_j\chi_{U_j}$ and $w_\eta=\sum_{j=1}^{m_\eta}\zeta_j\chi_{U_j}$ with disjoint $U_j.$

Now given $0<\varepsilon<\|\mathcal{L}\|_{p\to q}$ consider a finite sequence $0=c_0<c_1<\ldots<c_{N}<c_{N+1}=\infty,$ $N=N(\varepsilon),$ of $c_n$ chosen  as follows: $$K(c_n,c_{n+1})=K(I_n)=\varepsilon,\ \ \ \ \ n=0,1,\ldots,N-1; \ \ \ \ \ \ \ \ \ \ K(I_N)\le\varepsilon.$$ Denote $$N(M,\varepsilon)=\max\{n\in\mathbb{N}\colon 1/M<c_n\le M\},$$ $$N(1/M,\varepsilon)=\min\{n\in\mathbb{N}\colon 1/M\le c_n<M\}$$ and $$I_n(M)=\begin{cases} [c_n,M], & n=N(M,\varepsilon)\\ [1/M,c_n], & n=N(1/M,\varepsilon)\end{cases}.$$

It was shown in \cite[p.67]{EEH1997} and in \cite{LS2000} that if $U_j$ is contained in some $I_n$ for all $\varepsilon>0$ then $K(U_j)=0,$ and, therefore, $vw=0$ a.e. on $U_j.$ So, fixed $\eta>0$ let $\varepsilon_j=\inf\{\varepsilon>0\colon\,\textrm{there exists}\ n\ \textrm{such that}\,I_n\supset U_j\}$ and put $\epsilon=\min\{\varepsilon_j\colon \varepsilon_j>0\}.$ Then if $0<\varepsilon<\epsilon$ it follows that $U_j\nsubseteq I_n$ for all $n,j.$

Now we estimate the difference $$\biggl|\int_{1/M}^M v^\theta(t)w^\theta(t)\mathrm{d}t- \int_{1/M}^M \sum_{j=1}^{m_\eta}\xi_j^\theta \zeta_j^\theta\chi_{U_j}(t)\mathrm{d}t\biggr|=
\biggl|\int_{1/M}^M \bigl[v(t)w(t) -v_\eta(t)w_\eta(t)\bigr]\mathrm{d}t\biggr|$$ with the sum over those $j\in\{1,\ldots,m_\eta\}$ for which $\int_{U_j}v(t)w(t)\mathrm{d}t\not=0.$ If
$1<p=q<\infty$ then $\theta=1$ and we have from \cite[p.67]{EEH1997} that
\begin{eqnarray*}
\biggl|\int_{1/M}^M \bigl[v(t)w(t) -v_\eta(t)w_\eta(t)\bigr]\mathrm{d}t\biggr|
\le \biggl(\|v\|_{p',[1/M,M]}+\|w\|_{q,[1/M,M]}+\eta\biggr)\eta.\end{eqnarray*}
If $1<p<q<\infty$ then $\theta>1$ and we have from \cite{LS2000} that \begin{eqnarray*}&&
\biggl|\int_{1/M}^M \bigl[v^\theta(t)w^\theta(t) -v_\eta^\theta(t)w_\eta^\theta(t)\bigr]\mathrm{d}t\biggr|\\
&\le& \eta\biggl(\theta\|w\|_{q,[1/M,M]}^\theta\|v+v_\eta\|_{p',[1/M,M]}^{\theta-1}
+\theta\|v_\eta\|_{p',[1/M,M]}^\theta\|w+w_\eta\|_{q,[1/M,M]}^{\theta-1}\biggr).\end{eqnarray*}
Therefore, for all $1\le \theta<\infty$
\begin{equation}\label{eta}\biggl|\int_{1/M}^M v^\theta(t)w^\theta(t)\mathrm{d}t- \int_{1/M}^M \sum_{j=1}^{m_\eta}\xi_j^\theta \zeta_j^\theta\chi_{U_j}(t)\mathrm{d}t\biggr|\le O(\eta).\end{equation}

Since $\int_{U_j}v(t)w(t)\mathrm{d}t\not=0$ then $\varepsilon_j>0.$ Thus, we may let $0<\varepsilon<\epsilon$ and put $\mathcal{K}=\bigl\{n\colon\,\textrm{there exists}\ j\ \textrm{such that}\,I_n\subset U_j\bigr\}.$
Then $\sharp\mathcal{K}\ge N(M,\varepsilon)-N(1/M,\varepsilon)-2m_\eta$ and we obtain with help of \eqref{LH}-- \eqref{tri0} that \begin{eqnarray*}
\bigl(N(M,\varepsilon)-N(1/M,\varepsilon)-2m_\eta\bigr)\varepsilon^\theta\le \sum_{n\in\mathcal{K}}\|\mathcal{L}_{I_n}\|_{p\to q}^\theta\ll\sum_{n\in\mathcal{K}}\|H_{I_n;v,w}\|_{p\to q}^\theta\\
\ll\sum_{n\in\mathcal{K}}\|H_{I_n;v_\eta,w_\eta}\|_{p\to q}^\theta+ \sum_{j=1}^{m_\eta} \sum_{I_n\subset U_j}\|H_{U_j;\xi_j,\zeta_j}-H_{U_j;v,w}\|_{p\to q}^\theta\\
\ll \alpha_{p,q}^\theta\int_{1/M}^Mv_\eta^\theta(t) w_\eta^\theta(t)\mathrm{d}t+O(\eta^\theta).\end{eqnarray*} Applying \eqref{eta} we write
\begin{eqnarray*}
\bigl(N(M,\varepsilon)-N(1/M,\varepsilon)-2m_\eta\bigr)\varepsilon^\theta
\ll \alpha_{p,q}^\theta\int_{1/M}^Mv^\theta(t) w^\theta(t)\mathrm{d}t+O(\eta^\theta),\end{eqnarray*}
which shows
\begin{eqnarray}\label{lim0}\limsup_{\varepsilon\to 0}\varepsilon^\theta
\bigl[N(M,\varepsilon)-N(1/M,\varepsilon)\bigr]\nonumber\\
\ll \alpha_{p,q}^\theta\int_{1/M}^Mv^\theta(t) w^\theta(t)\mathrm{d}t+O(\eta^\theta).\end{eqnarray}

Now set $$k_{1/M}=\min\{k\in\mathbb{Z}\colon 2^k>N(1/M,\varepsilon)\ \textrm{and}\ K(0,2^k)\ge\varepsilon\},$$ $$k_{M}=\max\{k\in\mathbb{Z}\colon 2^k<N(M,\varepsilon)\ \textrm{and}\ K(2^k,\infty)\ge\varepsilon\}$$ and let $N=N(\varepsilon)=N(\infty,\varepsilon)$ be the total number of intervals $I_n.$ Obviously, $k_{1/M}\to-\infty,$ $k_M\to+\infty$ as $M\to +\infty.$

Since $1<p\le q<\infty$ we find from \eqref{z} and Lemmas \ref{lemma11}, \ref{lemma12} that
\begin{equation}\label{hvost1}
[N(\infty,\varepsilon)-N(M,\varepsilon)]\,\varepsilon^\theta\ll \sum_{k\ge k_M}\sigma_k^\theta,\ \ \
N(1/M,\varepsilon)\,\varepsilon^\theta\ll \sum_{k\le k_{1/M}}\sigma_k^\theta.
\end{equation}
Thus and by \eqref{lim0} \begin{eqnarray*}
\limsup_{\varepsilon\to 0}\varepsilon^\theta N(\varepsilon)\ll
\alpha_{p,q}^\theta\int_{1/M}^Mv^\theta(t) w^\theta(t)\mathrm{d}t+\sum_{|k|\ge \min\{|k_{1/M}|,|k_M|\}}\sigma_k^\theta+O(\eta^\theta),\end{eqnarray*} where the middle term is vanishing as $M\to +\infty$ provided $\sum_{k\in\mathbb{Z}}\sigma_k^\theta<\infty.$ Therefore, letting $\eta\to 0$ and $M\to+\infty$ we establish \begin{eqnarray}\label{lim1}
\limsup_{\varepsilon\to 0}\varepsilon^\theta N(\varepsilon)\ll
\alpha_{p,q}^\theta\int_{\mathbb{R}^+}v^\theta(t) w^\theta(t)\mathrm{d}t.\end{eqnarray} Further, from Lemma \ref{lemma2} we have $a_{N+1}(\mathcal{L})[N+1]^{-1/p'}\le\varepsilon.$ This gives $$\limsup_{N\to \infty}N^{1/q}a_N(\mathcal{L})\ll
\alpha_{p,q}\biggl(\int_{\mathbb{R}^+}v^\theta(t) w^\theta(t)\mathrm{d}t\biggr)^{1/\theta}.$$

{\rm\textbf{(ii)}} Put $v_\epsilon(t)=\lim_{\epsilon\to 0}\|v\|_{L^\infty(t-\epsilon,t+\epsilon)}$ and notice that by duality argument $\|H_{I,v,w}\|_{L^1(I)\to L^q(I)}=\|H_{I,v_\epsilon,w}\|_{L^1(I)\to L^q(I)}$ (see \cite[Lemma 4.1]{EHL1998} for details).

Now fix a large $M>0.$ On the strength of \cite[Lemma 4.2]{EHL1998} for each $\eta>0$ there exist step functions $w_\eta,$ $v_\eta$ on $[1/M,M]=\bigcup_{j=1}^{m_\eta}U_j$ such that $$\|w-w_\eta\|_{q,[1/M,M]}<\eta,\ \ \ \ \ \int_{[1/M,M]}w^q(t)[v_\eta^q(t)-v_\epsilon^q(t)]\mathrm{d}t<\eta$$ and $$\|v_\epsilon\|_{\infty,[1/M,M]}\ge v_\eta(t)\ge v_\epsilon(t),\ \ \ \ t\in [1/M,M].$$ We assume that
$v_\eta=\sum_{j=1}^{m_\eta}\xi_j\chi_{U_j}$ and $w_\eta=\sum_{j=1}^{m_\eta}\zeta_j\chi_{U_j}$ with disjoint $U_j.$

Now let $\varepsilon>0,$ $N=N(\infty,\varepsilon)=N(\varepsilon)$ and $\{c_n\}_0^{N+1}$ be the endpoints of the intervals $I_n=[c_n,c_{n+1}]$ with $0=c_0<c_1<\ldots<c_{N+1}=\infty$ and $$K(I_n)\le\varepsilon,\ \ \ n=0,\ldots,N;\ \ \ \ \ K(I_n,I_{n+1})>\varepsilon,\ \ \ n=0,\ldots,N-1.$$

If $q=1$ we have from \cite[p. 183]{EHL1998}
\begin{eqnarray}\label{q1}\biggl|\int_{[1/M,M]}w(t)v_\epsilon(t)\mathrm{d}t- \int_{[1/M,M]} w_\eta(t)v_\eta(t)\mathrm{d}t\biggr|\nonumber\\< \eta\bigl[1+\|v_\epsilon\|_{\infty,{[1/M,M]}}\bigr].\end{eqnarray}
If $q>1$ we obtain with help of \cite[(3.5)]{LS2000}
\begin{eqnarray}\label{qq}\biggl|\int_{[1/M,M]} w^q(t)v_\epsilon^q(t)\mathrm{d}t- \int_{[1/M,M]} w_\eta^q(t)v_\eta^q(t)\mathrm{d}t\biggr|\nonumber\\< \eta\bigl[1+q\|v_\epsilon\|_{\infty,{[1/M,M]}}\|w+w_\eta\|_{q,{[1/M,M]}}\bigr].\end{eqnarray}

Let an operator $H^\ast_{I,v,w}g(y)=v(y)\int_y^bg(t)w(t)\mathrm{d}t,$ $y\in I,$ be adjoint to $H_{I,v,w}.$
Similar to \cite[Lemma 4.5]{EHL1998} we have for $q=1$ with $\xi\ge v_\epsilon(t)\ge 0$ on $U$
\begin{eqnarray}\label{stok}\|H_{U,\xi,\zeta}\|_{1\to 1}=\|H_{U,\xi,\zeta}^\ast\|_{\infty\to \infty}\ge\|H_{U,\xi,v_\epsilon}^\ast\|_{\infty\to \infty}\nonumber\\=\|H_{U,\xi,v_\epsilon}\|_{1\to 1}\ge \frac{\xi}{2}\|(v_\epsilon\chi_{U})^\ast(t)t\|_{\infty,(0,|U|)},\end{eqnarray} where $g^\ast$ denotes the non-increasing rearrangement of a function $g$ and $|U|=\textrm{mes}\,U.$
If $q>1$ then an analog of the statement \cite[Lemma 4.5]{EHL1998} for the operator $H^\ast$ has to be modified as follows.
\begin{lemma}\label{lem}
Let $\textrm{mes}\,U<\infty$ and $\zeta,\xi\in\mathbb{R}^+$ with $\xi\ge v_\epsilon(t)\ge 0$ on $U.$ Then
\begin{eqnarray*}\|H_{U,\xi,\zeta}\|_{1\to q}= \|H^\ast_{U,\xi,\zeta}\|_{q'\to\infty}\ge\|H^\ast_{U,v_\epsilon,\zeta}\|_{q'\to\infty}\\\ge\sup_{f\in L^\infty(U),\,f\not=0}\inf_{\vartheta\in\mathbb{R}}\|H^\ast_{U,v_\epsilon,\zeta}-\vartheta v_\epsilon\|_{q'\to \infty}\\\ge\frac{\zeta}{2}\|(v_\epsilon\chi_{U})^\ast(t)t^{1/q}\|_{\infty,(0,|U|)}.
\end{eqnarray*}\end{lemma} From this and \eqref{stok} we obtain for all $q\ge 1$ that
\begin{eqnarray}\label{EHL}0\le\|H_{U,\xi,\zeta}\|_{1\to q}-\|H_{U,\xi,v_\epsilon}\|_{1\to q}\le\xi\,\zeta|U|^{1/q}-\frac{\zeta}{2}\|(v_\epsilon\chi_{U})^\ast(t)t^{1/q}\|_{\infty,(0,|U|)}\nonumber\\\le
\xi\,\zeta|U|^{1/q}-\frac{\zeta}{2}\max_{0<y<|U|^{1/q}/2}\biggl[y\biggl(\xi-\frac{2\int_{U}\zeta[\xi^q-v_\epsilon^q(t)]\mathrm{d}t} {\zeta|U|^{1/q}}\biggr)\biggr]\nonumber\\=
\xi\,\zeta|U|^{1/q}-\frac{\zeta}{2}\biggl(\xi-\frac{2\int_{U}\zeta[\xi^q-v_\epsilon^q(t)]\mathrm{d}t} {\zeta|U|^{1/q}}\biggr)\frac{|U|^{1/q}}{2}\nonumber\\
=\frac{1}{2}\int_{U}\zeta[\xi^q-v_\epsilon^q(t)]\mathrm{d}t +\frac{3}{4}\,\xi\,\zeta\,|U|^{1/q}\end{eqnarray} (see \eqref{H0} and the proof of \cite[Lemma 4.6]{EHL1998} for details).

Next, let $\mathbf{K}=\bigl\{n\colon\,\textrm{there exists}\ j\ \textrm{such that}\,I_{2n-1}\cup I_{2n}\subset U_j\bigr\}$ (see \cite[Lemma 2.3]{EHL1998} for details). Then $\sharp\mathbf{K}\ge N/2-1-2m_\eta$ and we obtain by \eqref{LH}, \eqref{H0}, \eqref{tri1} and \eqref{EHL} \begin{eqnarray*}
[N(M,\varepsilon)/2-N(1/M,\varepsilon)/2-2m_\eta]\varepsilon^q\le\sum_{n\in\mathbf{K}}K^q(I_{2n-1}\cup I_{2n})\\\ll\sum_{n\in\mathbf{K}} \|H_{I_{2n-1}\cup I_{2n},v,w}\|_{1\to q}^q
\le 3^{q-1}\sum_{n\in\mathbf{K}} \bigl[\|H_{I_{2n-1}\cup I_{2n},\xi,\zeta}\|_{1\to q}^q+\\\bigl|\|H_{I_{2n-1}\cup I_{2n},v_\epsilon,w}\|_{1\to q}-\|H_{I_{2n-1}\cup I_{2n},v_\epsilon,\zeta}\|_{1\to q}\bigr|^q\\
+\bigl|\|H_{I_{2n-1}\cup I_{2n},\xi,\zeta}\|_{1\to q}-\|H_{I_{2n-1}\cup I_{2n},v_\epsilon,\zeta}\|_{1\to q}\bigr|^q\bigl]\\
\ll\sum_{j=1}^{m_\eta}\xi_j^q\,\zeta_j^q\,|U_j|+\sum_{j=1}^{m_\eta}\biggl[\|w-\zeta\|_{q,U_j}^q \|v_\epsilon\|_{\infty,U_j}^q+\biggl(\int_{U_j}\zeta[\xi^q-v_\epsilon^q(t)]\mathrm{d}t\biggr)^q\biggr] \\
\le \int_{1/M}^M v_\eta^q(t) w_\eta^q(t)\mathrm{d}t+O(\eta^q).
\end{eqnarray*} With help of \eqref{q1} and \eqref{qq} this gives \begin{eqnarray*}
[N(M,\varepsilon)/2-N(1/M,\varepsilon)/2-2m_\eta]\varepsilon^q\ll\int_{1/M}^M v_\epsilon^q(t) w^q(t)\mathrm{d}t+O(\eta^q).\end{eqnarray*} Thus, \begin{eqnarray*}\limsup_{\varepsilon\to 0}\varepsilon^q\bigl[
N(M,\varepsilon)-N(1/M,\varepsilon)\bigr]\ll\int_{1/M}^M v_\epsilon^q(t) w^q(t)\mathrm{d}t+O(\eta^q).\end{eqnarray*}
Further, similar to the proof of the part {\rm\textbf{(i)}} we obtain \begin{eqnarray*}
\limsup_{\varepsilon\to 0}\varepsilon^q N(\infty,\varepsilon)\ll
\int_{\mathbb{R}^+}v_\epsilon^q(t) w^q(t)\mathrm{d}t.\end{eqnarray*} Finally, we have by the inequality $a_{N+1}(\mathcal{L})\le\varepsilon$ from Lemma \ref{lemma2} that $$\limsup_{N\to \infty}N^{1/q}a_N(\mathcal{L})\ll
\biggl(\int_{\mathbb{R}^+}v_\epsilon^q(t) w^q(t)\mathrm{d}t\biggr)^{1/q}.$$
\end{proof}


\begin{thebibliography}{99}

\bibitem{BenShar}\label{BenShar} \textsc{C. Bennett, R. Sharpley}, Interpolation of operators. Pure Appl. Math. 129, Academic Press, New York, 1988.

\bibitem{BirSol}\label{BirSol} \textsc{M.S. Birman, M.Z. Solomjak}, Estimates for the singular numbers of integral operators, \textit{Uspehi Mat. Nauk}, (1) \textbf{32} (1977) 17-82 (in Russian); English trans.: \textit{Russian Math. Surveys}, (1) \textbf{32} (1977) 15-89.

\bibitem{EEH1988}\label{EEH1988} \textsc{D.E. Edmunds, W.D. Evans, D.J. Harris}, Approximation numbers of certain Volterra integral operators, \textit{J. London Math. Soc.}, (2) \textbf{37} (1988) 471-489.

\bibitem{EEH1997}\label{EEH1997} \textsc{D.E. Edmunds, W.D. Evans, D.J. Harris}, Two-sided estimates for the approximation numbers of certain Volterra integral operators, \textit{Stud. Math.}, (1) \textbf{124} (1997) 59-80.

\bibitem{EHL1998}\label{EHL1998} \textsc{W.D. Evans, D.J. Harris, J. Lang}, Two-sided estimates for the approximation numbers of Hardy-type operators in $L^\infty$ and $L^1$, \textit{Stud. Math.}, (2) \textbf{130} (1998) 171-192.

\bibitem{ES1994}\label{ES1994} \textsc{D.E. Edmunds, V.D. Stepanov}, The measure of non-compactness and approximation numbers of certain Volterra integral operators, \textit{Math. Ann.}, \textbf{298} (1994) 41-66.

\bibitem{GHS1996}\label{GHS1996} \textsc{M.L. Goldman, H.P. Heinig, V.D. Stepanov}, On the principle of duality in Lorentz spaces, \textit{Can. J. Math.} \textbf{48} (1996) 959–979.

\bibitem{LL1999}\label{LL1999} \textsc{M.A. Lifshits, W. Linde}, Approximation and entropy numbers of Volterra operators with Applications to Brownian motion, \textit{Mem. Amer. Math Soc.}, (745) \textbf{157} (2002) viii+87 pp.

\bibitem{L2003I}\label{L2003I} \textsc{E.N. Lomakina}, Estimates for the approximation numbers of one class of integral operators I, \textit{Sib. Math. J.}, (1) \textbf{44} (2003) 147-159.

\bibitem{L2003II}\label{L2003II} \textsc{E.N. Lomakina}, Estimates for the approximation numbers of one class of integral operators II, \textit{Sib. Math. J.}, (2) \textbf{44} (2003) 298-310.

\bibitem{LS2000}\label{LS2000} \textsc{E.N. Lomakina, V.D. Stepanov}, On asymptotic behaviour of the approximation numbers and estimates of Schatten-von Neumann norms of the Hardy-type integral operators, Function spaces and applications (Delhi, 1997), 153–187, Narosa, New Delhi, 2000.


\bibitem{LS2007}\label{LS2007} \textsc{E.N. Lomakina, V.D. Stepanov}, Asymptotic estimates for the approximation and entropy numbers of a one-weight Riemann-Liouville operator, \textit{Siberian Adv. Math.}, (1) \textbf{17} (2007) 1–36.


   \bibitem{Pie}\label{Pie} \textsc{A. Pietsch}, Eigenvalues and $s$-numbers. Cambridge Studies in Advanced Mathematics, 13. Cambridge University Press, Cambridge, 1987. 360 pp.

        \bibitem{S2000}\label{S2000}
\textsc{V.D. Stepanov}, On the lower bounds fro Schatten-von Neumann norms of certain Volterra integral operators, \textit{J. London Math. Soc.}, \textbf{61}(2000), 905-922.

       \bibitem{SUsmz}\label{SUsmz}
\textsc{V.D. Stepanov}, \textsc{E.P. Ushakova}, Weighted estimates for the integral operators with monotone kernels on a half-axis, \textit{Sib. Math. J.}, \textbf{45}(2004), 1124-1134.

\bibitem{DAN10}\label{DAN10} \textsc{E.P. Ushakova}, On singular numbers of generalized Stieltjes transformation, \textit{Doklady Mathematics}, \textbf{81}(2010), 214-215.

\bibitem{SMZ10}\label{SMZ10} \textsc{E.P. Ushakova}, Estimates for the singular values of transformations of Stieltjes type, \textit{Siber. Math. J.}, (1)\textbf{52}(2010), 159-166.

\bibitem{U2011}\label{U2011} \textsc{E.P. Ushakova}, On compactness of Laplace and Stieltjes type transformations in Lebesgue spaces, submitted.


\end{thebibliography}
\end{document}